\documentclass[11pt]{article}
\usepackage{amsmath,amscd}
\usepackage{amssymb,latexsym,amsthm}
\usepackage{color}
\usepackage{mathpazo}
\usepackage[utf8]{inputenc}
\usepackage[T1]{fontenc}
\usepackage{csquotes}
\usepackage{multirow}
\usepackage{lscape}
\usepackage{enumerate}
\usepackage{longtable}
\usepackage{array}
\usepackage{fancyhdr}
\usepackage{amsfonts}
\usepackage{hyperref}
\usepackage{url}
\hypersetup{
  colorlinks=true,
  linkcolor=blue,
  citecolor=black,
  urlcolor=cyan
}
\newtheorem{theorem}{Theorem}[section]
\newtheorem{proposition}[theorem]{Proposition}
\newtheorem{lemma}[theorem]{Lemma}
\newtheorem{corollary}[theorem]{Corollary}

\theoremstyle{definition}
\newtheorem{remark}[theorem]{Remark}

\newtheorem{example}[theorem]{Example}

\numberwithin{equation}{section}

\newcommand{\pqnum}[1]{\left[#1\right]_{p,q}}
\newtheorem*{theoremA}{Theorem A}
\newtheorem*{theoremB}{Theorem B}
\DeclareMathOperator{\Gal}{Gal}

\DeclareMathOperator{\GKdim}{GKdim}

\DeclareMathOperator{\Oz}{Oz}
\DeclareMathOperator{\Aut}{Aut}
\DeclareMathOperator{\ord}{ord}
\DeclareMathOperator{\rk}{rk}
\DeclareMathOperator{\PIdeg}{PIdeg}
\DeclareMathOperator{\gr}{gr}

\newcommand{\K}{\Bbbk}

\newcommand{\Z}{\mathbb{Z}}

\newcommand{\C}{\mathbb{C}}
\makeatletter
\def\@roman#1{\romannumeral #1}
\makeatother

\title{\textbf{Ozone groups and centers of two-parameter quantum Heisenberg algebras}}
\author{James G\'omez, H\'ector Su\'arez and Helbert Venegas}
\date{}

\begin{document}
\maketitle

\begin{abstract}
\noindent
Let $p,q$ be roots of unity with $pq\neq1$, and put
$\ell=|\langle p,q\rangle|$ and $t=\ord(pq)$. Up to isomorphism, the
two-parameter quantum Heisenberg algebras $\mathcal H_{p,q}$ are exactly the
noetherian PI graded down-up algebras, so everything here holds for that whole
class. The ozone group is $\mu_\ell\times\mu_{\ell/t}$, generated by
conjugation by three normal elements, and the center is recovered from the
fixed ring of the ozone group as the invariants of a cyclic group of order
$t$. We give a presentation of the center as a hypersurface over a cyclic
quotient surface singularity. It is regular exactly when $1$ is a
characteristic root, which disproves a conjecture of Chan, Gaddis, Won and
Zhang, and Gorenstein exactly when $p^{m/h}=q^{n/h}$, where $m=\ord(p)$,
$n=\ord(q)$ and $h=\gcd(m,n)$; for $p=\zeta_5$, $q=\zeta_5^{2}$ it is not. We
also decide when the fixed ring of the ozone group is Gorenstein and when
$\mathcal H_{p,q}$ is Calabi--Yau, and show that the algebras with regular
center have non-affine automorphisms and nonzero locally nilpotent
derivations.

\bigskip
\noindent
\textit{Keywords: two-parameter quantum Heisenberg algebra; graded down-up algebra; PI algebras; center; ozone groups; Gorenstein rings; complete intersections; locally nilpotent derivations.}
\bigskip

\noindent 2020 \textit{Mathematics Subject Classification.} Primary: 16W20.
Secondary: 13A50, 13H10, 16E65, 16R20, 16S36, 16U70, 16W22, 16W25.
\end{abstract}


\section{Introduction}

Kirkman and Small \cite{ks} introduced the quantum Heisenberg algebra
\begin{align*}
\mathcal H_q&=\K\langle x,y,z\rangle/(zx-qxz,\ yz-qzy,\ xy-qyx-z),
\end{align*}
a deformation of the enveloping algebra of the three-dimensional Heisenberg
Lie algebra. It is a noetherian AS regular domain of global dimension three,
and a central quotient of it is Hayashi's $q$-analogue of the Weyl algebra
\cite{hayashi}. Gaddis \cite{Gaddis2016} added a second parameter,
\begin{align}
\mathcal H_{p,q}&=\K\langle x,y,z\rangle/(zx-p^{-1}xz,\ zy-pyz,\ yx-qxy-z),
& p,q&\in\K^{\times},
\label{eq:rel}
\end{align}
an iterated Ore extension which, for $pq\neq1$, is PI exactly when $p$ and
$q$ are roots of unity \cite[Propositions~3.4 and~3.11]{Gaddis2016}. The
condition $pq\neq1$ is missing there, and it cannot be dropped
(Section~\ref{sec:pq1}).

Since $z=yx-qxy$, the algebra is generated by $x$ and $y$, and eliminating
$z$ \cite[(3.7)]{Gaddis2016} identifies it with a graded down-up algebra:
\begin{align*}
\mathcal H_{p,q}&\cong A(\alpha,\beta)
=\K\langle x,y\rangle/(y^{2}x-\alpha yxy-\beta xy^{2},\
yx^{2}-\alpha xyx-\beta x^{2}y), &
\alpha&=p^{-1}+q,\; \beta=-p^{-1}q .
\end{align*}
The characteristic roots, the roots of $X^{2}-\alpha X-\beta$, are
$\omega_1=q$ and $\omega_2=p^{-1}$, and the two normal elements
$\Omega_i=yx-\omega_i xy$ are $z$ and $\theta:=yx-p^{-1}xy$. Up to
isomorphism, the $\mathcal H_{p,q}$ with $p,q$ roots of unity and $pq\neq1$
are exactly the noetherian PI graded down-up algebras
(Proposition~\ref{prop:Hpq-downup}). So this paper is about that whole class,
and we switch between the two presentations as convenient.

The ozone group of an algebra $A$ with center $Z$ is
$\Oz(A)=\Aut_{Z\text{-alg}}(A)$, the automorphisms fixing $Z$ pointwise.
Chan, Gaddis, Won and Zhang introduced it to study centers of PI skew
polynomial rings \cite{ChanGaddisWonZhang2024OzoneGroupsCenters}. In
\cite{ChanGaddisWonZhang2025} they showed that for a noetherian PI AS regular
algebra $\Oz(A)$ is finite of order dividing $\rk_ZA$, that a trivial ozone
group forces the Calabi--Yau property, and that among algebras generated in
degree one the skew polynomial rings are the ones with abelian ozone group of
order $\rk_ZA$. Down-up algebras were proposed as a test case
\cite[Section~6]{ChanGaddisWonZhang2024OzoneGroupsCenters}, and part of the
work is already there. Ozone automorphisms of noetherian PI down-up algebras
are diagonal on $x$ and $y$, so the group is abelian
\cite[Corollary~4.7]{ChanGaddisWonZhang2025}; Liu, Wu and Zhu later proved
that the ozone group of every noetherian PI AS regular algebra is abelian
\cite[Theorem~3.4]{LiuWuZhu2026OzoneGroups}. For $A(0,1)$ and $A(0,-1)$ the
ozone group, the center and the rank are computed in
\cite[Example~4.8]{ChanGaddisWonZhang2025}. For these two algebras
$A^{\Oz(A)}$ is commutative and the center is its fixed ring under a Galois
group of order two \cite[Examples~5.4 and~5.5]{ChanGaddisWonZhang2025}; this
is how the center of $A(0,-1)$ is seen to be Gorenstein. In general, the
authors note, no presentation of the center was known, nor whether it is
always Gorenstein \cite[Section~5.1]{ChanGaddisWonZhang2025}. Generators had
been found by Zhao \cite{Zhao1999}. (The quantum Heisenberg algebras of
\cite[Section~1.1]{ChanGaddisWonZhang2025} have $z^{2}$ instead of $z$ in the
last relation. They form a different family.)

Here we carry out the computation for the whole class. Throughout, $p,q$ are
roots of unity with $pq\neq1$ and
\begin{align*}
m&=\ord(p), & n&=\ord(q), & \ell&=\operatorname{lcm}(m,n), &
h&=\gcd(m,n), & t&=\ord(pq).
\end{align*}
Then $\ell=\PIdeg\mathcal H_{p,q}$ \cite[Theorem~3.1]{BeraMandalNandy2024},
$t\mid\ell$, and in terms of the roots $t=\ord(\omega_1\omega_2^{-1})$.

\begin{theoremA}
Let $A=A(\alpha,\beta)\cong\mathcal H_{p,q}$ be a noetherian PI graded
down-up algebra with center $Z$.
\begin{enumerate}[\rm (i)]
\item $\Oz(A)\cong\mu_\ell\times\mu_{\ell/t}$. The first factor is generated
by conjugation by $\Omega_1$ and $\Omega_2$, the second by conjugation by the
normal element $x^{t}$ {\rm(Corollaries~\ref{ozono2H_pq}
and~\ref{cor:generadores-ozono})}.
\item $A^{\Oz(A)}$ is commutative, $Z\subsetneq A^{\Oz(A)}\subsetneq A$,
$\rk_ZA^{\Oz(A)}=t$ and $\rk_{A^{\Oz(A)}}A=\ell^{2}/t$
{\rm(Proposition~\ref{prop:fijo}, Corollary~\ref{cor:noskew})}.
\item $\Gal(A^{\Oz(A)}/Z)$ is cyclic of order $t$, with fixed ring $Z$, and
none of its nontrivial elements extends to an automorphism of $A$
{\rm(Proposition~\ref{prop:Ct})}.
\item $A^{\Oz(A)}$ is a complete intersection if and only if it is
Gorenstein, if and only if $\ell/t\in\{1,2\}$
{\rm(Corollary~\ref{cor:fijo-ci})}.
\item $A$ is Calabi--Yau if and only if $\omega_1\omega_2=1$, that is,
$\beta=-1$ {\rm(Corollary~\ref{calabiyau})}.
\end{enumerate}
\end{theoremA}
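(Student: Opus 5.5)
The plan is to work in the Ore-extension presentation \eqref{eq:rel}, where $A$ has the PBW basis $x^ay^bz^c$, and to compute everything explicitly with monomials. The first step is the center. Conjugation by $z$ scales $x$ by $p^{-1}$ and $y$ by $p$. Conjugation by $\theta=yx-p^{-1}xy$ scales $x$ and $y$ by suitable powers of $q$. A direct computation in the ordered basis shows that $x^{t}$ is normal, because $y x^{k}$ differs from $q^k x^k y$ by a multiple of $x^{k-1}z$ with coefficient a $(p,q)$-number. That coefficient vanishes exactly when $(pq)^k=1$. The expected description of $Z$ is then
\[
Z=\K[x^\ell,y^\ell,z^\ell,\theta^\ell,\ldots],
\]
namely the degree-zero part for a $\mathbb Z^2$-type grading twisted by the characters of $z$ and $\theta$, together with the extra central elements built from $z^iy^{?}\theta^{j}$. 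To control these I would use the rank: $\rk_ZA=\ell^2\cdot\ell$ from $\PIdeg=\ell$ and the known Gelfand--Kirillov dimension $3$.

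For (i), I will invoke \cite[Corollary~4.7]{ChanGaddisWonZhang2025}: every ozone automorphism is diagonal, $x\mapsto\lambda x$, $y\mapsto\mu y$. The condition of fixing $Z$ pointwise then becomes finitely many conditions on $(\lambda,\mu)$. These come from the central monomials $x^\ell$, $y^\ell$, $z^{\ell}$ (which gives $(\lambda\mu)^\ell=1$), and from the mixed central elements such as $x^{?}\theta^{?}$. The resulting group should be exactly the group generated by the three conjugations. Conjugations by $z$ and $\theta$ give the pairs $(p^{-1},p)$ and $(q^{\pm1},q^{\mp1})$-type pairs, so they generate a $\mu_\ell$ acting by $(\lambda,\lambda^{-1})$. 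Conjugation by $x^t$ acts by $(1,q^{-t})$-type scalars on $y$ and has order $\ell/t$ modulo the first factor. The main obstacle is the upper bound, i.e.\ showing that no other diagonal automorphism fixes $Z$. I would handle it by exhibiting enough central elements, which needs the precise generators of $Z$ (Zhao's generators \cite{Zhao1999}).

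For (ii) and (iii), I will compute $A^{\Oz(A)}$ as the span of the PBW monomials invariant under the characters above. Commutativity follows from checking that the generators of $A^{\Oz(A)}$ (such as $x^\ell$, $y^\ell$, $z^\ell$ and $xy$-type elements of weight zero) pairwise commute. Their commutators are computed using the commutation scalars, which become trivial on invariants. Since $\Oz(A)$ is finite abelian acting faithfully on the domain $A$, Galois theory on the quotient division rings gives $\rk_{A^{\Oz}}A=|\Oz(A)|=\ell^2/t$. Then $\rk_ZA^{\Oz}=\ell^3/(\ell^2/t)\cdot(1/\ell)$ gives $t$, once $\rk_ZA=\ell^2\cdot t$... here I need the exact rank of $A$ over $Z$, so I will rather compute it directly from the monomial description of $Z$. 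For (iii), the field extension $Q(A^{\Oz})/Q(Z)$ has degree $t$. The automorphism $\sigma$ multiplying $\theta$ (or $z$) by a primitive $t$-th root of unity while fixing $x^\ell,y^\ell$ generates a cyclic group with fixed ring $Z$. It does not extend to $A$, because any extension would be diagonal and would then lie in $\Oz(A)$ (hence act trivially) or would fail to preserve the relations.

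For (iv), $A^{\Oz(A)}$ is a normal affine semigroup-type ring (a quotient-singularity hypersurface). I will write it as a polynomial ring over an invariant ring $\K[u,v]^{\mu_{\ell/t}}$ of a cyclic group acting with weights $(1,-1)$-twisted, adjoined with a hypersurface relation. By Watanabe's criterion, such a ring is Gorenstein iff the group lies in $SL$. It is a complete intersection iff the cyclic quotient is an $A_k$ singularity, and the cyclic-order computation reduces this to $\ell/t\le2$. For (v), I will use that the Nakayama automorphism of $A(\alpha,\beta)$ is known to be $x\mapsto-\beta x$, $y\mapsto-\beta^{-1}y$, so $A$ is Calabi--Yau iff that automorphism is trivial, i.e.\ iff $\beta=-1$. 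Since $\beta=-\omega_1\omega_2$, this is the condition $\omega_1\omega_2=1$.
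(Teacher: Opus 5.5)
Your plan for (v) matches the paper: you use the Nakayama automorphism of the down-up algebra and the fact that a connected graded domain has only trivial inner automorphisms. Parts (i)--(iv), however, have genuine gaps.

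\textbf{Part (i).} The factor $\mu_{\ell/t}$ never actually appears. The central elements you name, $x^{\ell},y^{\ell},z^{\ell}$, only force $a^{\ell}=b^{\ell}=(ab)^{\ell}=1$, which gives $\mu_\ell\times\mu_\ell$. Two things are missing:
\begin{enumerate}
\item The central monomials $z^{i}\theta^{j}$ with $p^{i}=q^{j}$. They are central because $z^{i}\theta^{j}x=p^{-i}q^{j}xz^{i}\theta^{j}$ and $z^{i}\theta^{j}y=p^{i}q^{-j}yz^{i}\theta^{j}$, and each one forces $(ab)^{i+j}=1$.
\item The arithmetic fact $\gcd\{i+j:\ i,j\geq0,\ p^{i}=q^{j}\}=\ell/t$ (Lemma~\ref{lem:aritmetica}). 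Such sums $c$ are exactly those with $q^{c}\in\langle pq\rangle$, and $\Gamma/\langle pq\rangle$ is cyclic of order $\ell/t$, generated by $\overline q$ since $\overline p=\overline q^{-1}$. The same lemma gives $\operatorname{lcm}(n,t)=\ell$, which you need for $\ord(q^{t})=\ell/t$.
\end{enumerate}
With these facts your strategy closes without Zhao's generators. The three conjugations fix $Z$ automatically and give the lower bound; these central elements give the upper bound. As written, though, the upper bound is absent.

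\textbf{Part (ii).} Posner gives $\rk_ZA=\PIdeg(A)^{2}=\ell^{2}$, not $\ell^{3}$ or $\ell^{2}t$. Also, every element of $\Oz(A)$ becomes inner in the quotient division ring. So $\rk_{A^{\Oz(A)}}A=|\Oz(A)|$ is not the outer Galois theorem; it needs the conjugating normal elements to be linearly independent over $\mathrm{Frac}(Z)$. The paper avoids this by computing the fixed ring directly. Since $\varphi_{a,b}$ acts on bidegree $(u,v)$ by $a^{u-v}(ab)^{v}$, one gets $A^{\Oz(A)}=\bigoplus_{a\geq0}x^{\ell a}S\oplus\bigoplus_{b\geq1}y^{\ell b}S\cong S[U,V]/(UV-F)$, with $S$ the $(\ell/t)$-th Veronese ring of $\K[z,\theta]$. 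This gives commutativity at once, $\rk_ZA^{\Oz(A)}=[\mathrm{Frac}\,S:\mathrm{Frac}\,R]=t$, and then $\ell^{2}/t$ by multiplicativity.

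\textbf{Part (iii).} Your generator is not in the Galois group. Scaling only $\theta$ (or only $z$) by a primitive $t$-th root of unity moves central elements. For example, for $p=q=\zeta_3$ one has $t=3$, and it sends the central element $z\theta$ to $\zeta_3z\theta$. The correct $C_t$ comes from grading $z^{i}\theta^{j}$ by $p^{i}q^{-j}\in\Gamma$. On $S$ the degrees lie in $\langle pq\rangle$, the degree-one part is $R$, and the characters of $\langle pq\rangle$ act on $S[U,V]/(UV-F)$ with fixed ring $Z$. Your non-extension argument works once you drop the unjustified assumption that an extension is diagonal: any extension fixes $Z$, so it lies in $\Oz(A)$, so it is trivial on $A^{\Oz(A)}$.

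\textbf{Part (iv).} The group action is wrong. With $e=\ell/t$, $S=\K[z,\theta]^{\mu_e}$ for the \emph{scalar} action, with weights $(1,1)$, not $(1,-1)$. With weights $(1,-1)$ the invariant ring would be $\K[u^{e},v^{e},uv]$, an $A_{e-1}$ singularity, which is always Gorenstein and a complete intersection; the criterion $e\in\{1,2\}$ could not come out. With the scalar action, Watanabe gives $S$ Gorenstein iff $\mu_e\subseteq\mathrm{SL}_2$, i.e.\ $e\mid2$. Also, $S$ is the cone over a rational normal curve, a complete intersection only for $e\leq2$. You still have to pass between $S$ and $S[U,V]/(UV-F)$ in both directions. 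You can do this by faithful flatness, as in Proposition~\ref{prop:descenso}. Alternatively, argue as the paper does: use $\mathrm{hdet}(\varphi_{a,b})=(ab)^{2}$ with \cite[Proposition~6.5]{ChanGaddisWonZhang2024OzoneGroupsCenters} for $e\geq3$, and explicit presentations for $e=1,2$.
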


\begin{theoremB}
Let $A$ be as in Theorem~A, and put $n_i=\ord(\omega_i)$ and
$h=\gcd(n_1,n_2)$.
\begin{enumerate}[\rm (i)]
\item $Z\cong R[U,V]/(UV-F)$, with $U\mapsto x^{\ell}$, $V\mapsto y^{\ell}$,
where $R\subseteq\K[\Omega_1,\Omega_2]$ is the invariant ring of a diagonal
action of $\mu_\ell$ and $F$ is a nonzero multiple of
$(\Omega_2^{t}-\Omega_1^{t})^{\ell/t}$. So $Z$ is a hypersurface over a
cyclic quotient surface singularity, and a Cohen--Macaulay domain of
dimension three {\rm(Proposition~\ref{prop:presentacion})}.
\item $Z$ is regular if and only if $1\in\{\omega_1,\omega_2\}$, that is,
$\alpha+\beta=1$ {\rm(Corollary~\ref{cor:regular-downup})}.
\item $Z$ is a complete intersection if and only if it is Gorenstein, if and
only if $\omega_1^{\,n_1/h}\omega_2^{\,n_2/h}=1$
{\rm(Proposition~\ref{prop:descenso}, Corollary~\ref{cor:ci})}.
\end{enumerate}
\end{theoremB}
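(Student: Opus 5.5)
Theorem~B is the assembly of Proposition~\ref{prop:presentacion}, Corollary~\ref{cor:regular-downup}, Proposition~\ref{prop:descenso} and Corollary~\ref{cor:ci}, and I would prove it in that order, working in the Ore-extension presentation $\mathcal H_{p,q}$ with normal elements $z=\Omega_1$, $\theta=\Omega_2$.

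\emph{Step 1: generators of $Z$.} A PBW-type basis adapted to the normal elements is $\{x^a\Omega_1^{b}\Omega_2^{c}y^d\}$. Up to scalars, conjugation by $x$, $y$, $\Omega_1$, $\Omega_2$ acts diagonally on these monomials, so an element is central exactly when each of its monomials is. Central monomials therefore come from a character computation: one needs $\ell\mid a$, $\ell\mid d$, and $\Omega_1^b\Omega_2^c$ invariant under a diagonal $\mu_\ell$-action whose weights are read off from $x\Omega_i=\omega_j^{\pm1}\Omega_i x$. This gives $Z$ as generated by $U=x^\ell$, $V=y^\ell$ and the invariant ring $R=\K[\Omega_1,\Omega_2]^{\mu_\ell}$, which is a cyclic quotient surface singularity. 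The relation comes from writing $x^\ell y^\ell$ as a polynomial $F$ in $\Omega_1,\Omega_2$. Using $xy=(\Omega_1-\Omega_2)/(\omega_2-\omega_1)$ and the $q$-commutation rules, $x^ky^k$ factors as a product of terms $(\omega_2^{j}\Omega_1-\omega_1^{j}\Omega_2)$ up to scalars. Over $k=\ell$ these terms group into orbits of size $t$, and each orbit contributes a scalar multiple of $\Omega_2^{t}-\Omega_1^{t}$, which yields $F$. To see that $UV-F$ generates the whole ideal, compare Hilbert series, or argue that $R[U,V]/(UV-F)$ is a domain of dimension $3$ (since $F\neq0$) that maps onto $Z$, which also has dimension $3$. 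Cohen--Macaulayness then follows because $R$ is Cohen--Macaulay and $UV-F$ is a nonzerodivisor.

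\emph{Step 2: regularity.} $Z$ is a graded hypersurface over $R$, so it is regular iff it is a polynomial ring in three variables. If $\omega_1=1$ (say), the weights make $\Omega_1$ itself invariant and the action on $\Omega_2$ cyclic, so $R=\K[\Omega_1,\Omega_2^{r}]$. Then $F$ involves a generator of $R$ linearly, and $UV-F$ can be used to eliminate that generator, leaving a polynomial ring. Conversely, suppose neither root is $1$. Then both weights are nontrivial, so either $R$ is singular, or $R$ is polynomial but $F$ lies in $\mathfrak m^2$, being a power of a product of non-generator invariants. I would check that the embedding dimension of $Z$ at the origin exceeds $3$.

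\emph{Step 3: Gorenstein and complete intersection.} A hypersurface over $R$ is Gorenstein (respectively a complete intersection) iff $R$ is. For the cyclic quotient $\K[u,v]^{\mu_\ell}$, Watanabe's theorem says $R$ is Gorenstein iff the action lies in $SL_2$ up to a pseudo-reflection quotient. After removing the reflection subgroups, which are of orders $n_1/h$ and $n_2/h$ along the axes, the condition becomes $\omega_1^{n_1/h}\omega_2^{n_2/h}=1$. Two-dimensional Gorenstein cyclic quotient singularities are $A_k$ singularities, which are hypersurfaces, so Gorenstein and complete intersection coincide.

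I expect the main obstacle to be the exact computation of the weights and the reflection subgroup in Step 3, since correctly identifying the image of the action modulo reflections is delicate. The verification that the single relation $UV-F$ suffices in Step 1 is a secondary difficulty.
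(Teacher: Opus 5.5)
Your outline follows the paper's route: the bigraded description of $Z$ together with $x^{\ell}y^{\ell}=F$, embedding dimension for (ii), and, for (iii), Watanabe after factoring out pseudo-reflections plus the fact that $A_{h-1}$ is a hypersurface. However, Step~1 rests on a false claim.

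\textbf{The basis in Step~1.} The set $\{x^{a}\Omega_1^{b}\Omega_2^{c}y^{d}\}$ is not a basis. The relation $\Omega_1-\Omega_2=(\omega_2-\omega_1)xy$ is a linear dependence among three of its members in bidegree $(1,1)$, and that component is two-dimensional. Moreover $x$ and $y$ are not normal, so commuting with them is not diagonal: $yx^{a}=q^{a}x^{a}y+\pqnum{a}x^{a-1}z$. Hence ``central iff each monomial is central'' has no meaning as stated. Nor can any character computation show that $x^{\ell}$ is central: that fact is the vanishing $\pqnum{\ell}=0$ (i.e.\ $t\mid\ell$) of the correction term.

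\textbf{The repair.} This is Lemma~\ref{lem:comp} and Proposition~\ref{prop:centro-bigrad}.
\begin{itemize}
\item $Z$ is bigraded, and the component of bidegree $(u,v)$ with $u\geq v$ has basis $x^{u-v}z^{i}\theta^{j}$ with $i+j=v$. Positive powers of $x$ and $y$ never occur together.
\item Commuting $x^{a}g$ with $x$ is diagonal and forces $\operatorname{supp}g\subseteq L$.
\item Then $x^{a}gy-yx^{a}g$ equals $x^{a-1}$ times a linear form in $z,\theta$ times $g$. Its vanishing means $q^{a}=1=(pq)^{a}$, i.e.\ $\ell=\mathrm{lcm}(n,t)\mid a$.
\end{itemize}
Commuting with $z$ and $\theta$ instead gives $m\mid a$ and $n\mid a$ directly, but sufficiency still needs $\pqnum{\ell}=0$.

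Once generation is proved this way, your injectivity argument is fine and differs from the paper's. $R[U,V]/(UV-F)$ embeds in $R[U^{\pm1}]$ via $V\mapsto FU^{-1}$, so it is a three-dimensional domain, and the kernel of its surjection onto $Z$ has height zero. The paper instead matches free $R$-bases.

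\textbf{Descent in Step~3.} The claim ``a hypersurface over $R$ is Gorenstein iff $R$ is'' fails in general in the ``only if'' direction. For example, $R[U,V]/(rU-1)\cong R_r[V]$ is regular when $R$ has an isolated singularity and $r\in R_+$, whether or not $R$ is Gorenstein. Descent works here because $Z$ is free over $R$ on $\{U^{k}\}_{k\geq0}\cup\{V^{k}\}_{k\geq1}$. So each $R_{\mathfrak m}\to Z_{\mathfrak n}$ is flat local and the Gorenstein property descends (Proposition~\ref{prop:descenso}). You need this only for the Gorenstein property: then $R\cong\K[u,v,w]/(uv-w^{h})$, and $Z$ is cut out by two equations. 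Your reflection orders $n_1/h$, $n_2/h$ and the resulting condition are correct, though the Watanabe computation is left undone.

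\textbf{Minor slips.}
\begin{itemize}
\item The linear factors of $x^{k}y^{k}$ are $\omega_2^{j}\Omega_2-\omega_1^{j}\Omega_1$, not $\omega_2^{j}\Omega_1-\omega_1^{j}\Omega_2$. At $k=\ell$ the product is unchanged up to a scalar, so this does no harm.
\item Since $x\Omega_1=\omega_2^{-1}\Omega_1x$ and $x\Omega_2=\omega_1^{-1}\Omega_2x$, if $\omega_1=1$ it is $\Omega_2$ that is invariant, and $R=\K[\Omega_2,\Omega_1^{n_2}]$. This is harmless by symmetry.
\item The singular-$R$ case of (ii) is immediate. $UV-F$ eliminates at most one of the at least three generators of $R$, so $Z$ needs at least four generators.
\end{itemize}
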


We prove Theorem~A(i) twice. The first proof reads the group off the bigraded
support of the center. The second never looks at the center: it classifies
the homogeneous normal elements of $\mathcal H_{p,q}$
(Theorem~\ref{thm:normales}). We prefer the second, because it shows where
the factor $\mu_{\ell/t}$ comes from. The element $x^{t}$ is normal because
$(pq)^{t}=1$ kills the correction term in
$yx^{t}=q^{t}x^{t}y+[t]_{p,q}x^{t-1}z$ (see \eqref{eq:k1}), and that is all
there is to it. For
$(\alpha,\beta)=(0,\pm1)$ we recover \cite[Example~4.8]{ChanGaddisWonZhang2025},
generators included. The same classification makes explicit the unique
factorization of $\mathcal H_{p,q}$ known from
\cite{LiuWu2026UFR,LaunoisLopes2013}: the central class group is the ozone
group, generated by the classes of $z$, $\theta$ and $x^{t}$
(Remark~\ref{rem:UFR}).

By Theorem~A(ii)--(iii), the iterated fixed ring of
\cite[Section~5.1]{ChanGaddisWonZhang2025} always stops after two steps in
this class, and the Galois group of the second step is cyclic of order $t$;
Examples~5.4 and~5.5 there are the case $t=2$. We doubt this is the way to
the Gorenstein property, though. By Theorem~A(iv) and Theorem~B(iii) the fixed ring of the ozone group and the center are Gorenstein independently of each other,
and all four combinations occur (Remark~\ref{rem:gor-independientes}).
Homological determinants tell the same story. The ozone group has trivial
homological determinant exactly when $\ell/t$ divides $2$
(Proposition~\ref{prop:hdet}), so with Theorem~A(v) the two conditions in
\cite[Question~6.2(2)]{ChanGaddisWonZhang2024OzoneGroupsCenters} (repeated as
\cite[Question~5.2(3)]{ChanGaddisWonZhang2025}) separate precisely when
$\ell/t\mid2$ and $\omega_1\omega_2\neq1$. For down-up algebras this is also
\cite[Question~6.7]{ChanGaddisWonZhang2024OzoneGroupsCenters}, that is,
\cite[Question~5.6]{ChanGaddisWonZhang2025}. Liu, Wu and Zhu had already
answered it with $A(0,1)$ \cite[Example~3.9]{LiuWuZhu2026OzoneGroups}. The
pair $p=\zeta_5$, $q=\zeta_5^{2}$ does it with a center that is not even
Gorenstein (Corollary~\ref{cor:hdet-noCY}). In Theorem~A(v), the case $p=q$ is due to Lopes, H.~Su\'arez and Y.~Su\'arez \cite[Corollary~3.4]{LopesSuarezSuarez2025};
the converse needs the Nakayama automorphism.

Theorem~B(i) is the presentation missing in
\cite[Section~5.1]{ChanGaddisWonZhang2025}. As a set of generators it is
Zhao's, but that set is not minimal. A minimal one needs the Hilbert basis of
the affine semigroup $L_{\geq0}$ of exponents of central monomials in $z$ and
$\theta$, not just generators of the lattice $L$ it spans. The center needs
exactly $|\mathcal B|+2-\delta$ generators (Corollary~\ref{cor:generadores}),
a number that is unbounded in the family, and it is a hypersurface exactly
when this number is at most four. This corrects
\cite[Theorem~4.1]{BeraMandalNandy2024}, whose list already falls short for
$p=\zeta_5$, $q=\zeta_5^{2}$. It also finishes the computation of
$Z(\mathcal H_q)$ begun in \cite[Theorem~3.5]{LV20} (Remark~\ref{rem:LV}).

Theorem~B(ii) disproves a conjecture of Chan, Gaddis, Won and Zhang. They
observed that $Z(A(0,1))$ is a polynomial ring
\cite[Example~4.8(1)]{ChanGaddisWonZhang2025} and conjectured that $A(0,1)$
is the only down-up algebra with regular center
\cite[p.~5724]{ChanGaddisWonZhang2024OzoneGroupsCenters}.
But every root of
unity $r\neq1$ gives another one, $A(1+r,-r)$, with roots $1$ and $r$:
$A(0,1)$ is $r=-1$, and the next is $r=\zeta_3$. These turn out to be the PI
generalized Heisenberg algebras of L\"u and Zhao \cite{LuZhao2015}
(Corollary~\ref{cor:Hf}). Theorem~B(iii) follows from Watanabe's theorem once the
pseudo-reflections are factored out of the action on
$\K[\Omega_1,\Omega_2]$. It produces noetherian PI down-up algebras whose
center is not Gorenstein, the smallest at $p=\zeta_5$, $q=\zeta_5^{2}$.
Together, parts~(ii) and~(iii) of Theorem~B answer
\cite[Question~6.4]{ChanGaddisWonZhang2024OzoneGroupsCenters} and the problem
left in \cite[Section~5.1]{ChanGaddisWonZhang2025}. When $n_1=n_2$ the
condition in Theorem~B(iii) is just $\omega_1\omega_2=1$, so for roots of the same
order a Gorenstein center and the Calabi--Yau property are the same thing.

The algebras with regular center have one more feature. When
$\alpha+\beta=1$ the algebra is generated in degree one and its center is a
polynomial ring on generators of degree greater than one. These are the
hypotheses of \cite[Question~6.2(5)]{ChanGaddisWonZhang2024OzoneGroupsCenters},
which asks whether all automorphisms are then affine and whether there are no
nonzero locally nilpotent derivations. Both answers are negative for every
such algebra (Proposition~\ref{prop:LND}, Corollary~\ref{cor:LND-downup}).

For $pq=1$ we have $\theta=z$, the algebra is not module-finite over its
center, and the ozone group is infinite. For $p=q=1$, the enveloping algebra
$U(\mathfrak h)$, it is not even abelian. Infinite nonabelian ozone groups
were known for a non-connected algebra
\cite[Example~5.1]{ChanGaddisWonZhang2025}; here the algebra is connected
graded and AS regular, but not PI.

This paper belongs to a series on ozone groups of iterated Ore extensions.
In \cite{GomezVenegas1} we treat the algebras $B_q(f)$ of Gaddis and Yee
\cite{GaddisYee2026TrivialOzone}, which, like $\mathcal H_{p,q}$, have global
dimension three, and in \cite{GomezVenegas2} the positive part
$U_q^{+}(B_2)$, which is Artin--Schelter regular of global dimension four.
Unlike $B_q(f)$ and $U_q^{+}(B_2)$, whose ozone groups are trivial for
suitable parameters, the algebras $\mathcal H_{p,q}$ never have trivial ozone
group.

Section~\ref{sec:down vs quant} sets up the dictionary between
$\mathcal H_{p,q}$ and down-up algebras. Section~\ref{sec:center} contains
Theorem~B. Section~\ref{sec:ozono} computes the ozone group, its fixed ring
and the normal elements, and Section~\ref{sec:pq1} treats $pq=1$.
Section~\ref{sec:CGWZ} collects the answers to the questions of Chan, Gaddis,
Won and Zhang. Throughout, $\K$ is an algebraically closed field of
characteristic zero.
\section{Graded down-up algebras versus two-parameter quantum Heisenberg algebras}\label{sec:down vs quant}

 In this section we present the relationship between graded down-up algebras and two-parameter quantum Heisenberg algebras. The monomials $z^{i}x^{j}y^{k}$ with $i,j,k\geq0$ form a PBW basis of
$\mathcal H_{p,q}$. We use two gradings for $\mathcal H_{p,q}$. The first is the $\Z^{2}$-grading given by
\begin{align}
\deg x=(1,0),\qquad \deg y=(0,1),\qquad \deg z=(1,1),
\label{eq:bigrad}
\end{align}
all three defining relations being homogeneous; the total grading with
$\deg x=\deg y=1$ and $\deg z=2$ is its specialization. Second, the filtration
with $|x|=|y|=|z|=1$ has associated graded the quantum affine space
\begin{align*}
\gr\mathcal H_{p,q}=\K\langle X,Y,Z\rangle/\bigl(ZX-p^{-1}XZ,\ ZY-pYZ,\ YX-qXY\bigr),
\end{align*}
because the only correction term, the $z$ in the third relation, reduces the
degree by one.

The element $z$ is normal, and so is
\begin{align}
\theta:=yx-p^{-1}xy=(q-p^{-1})xy+z,
\qquad
\theta z=z\theta,\quad \theta x=qx\theta,\quad \theta y=q^{-1}y\theta .
\label{eq:theta}
\end{align}
Assume $pq\neq1$ and write $\pqnum{k}=(q^{k}-p^{-k})/(q-p^{-1})$. Then
\cite[Lemma~2.2]{BeraMandalNandy2024}
\begin{align}
yx^{k}&=q^{k}x^{k}y+\pqnum{k}x^{k-1}z,
\label{eq:k1}\\
y^{k}x&=q^{k}xy^{k}+\pqnum{k}zy^{k-1},\nonumber
\end{align}
and
\begin{align}
\pqnum{k}=0\iff(pq)^{k}=1 .
\label{eq:pq}
\end{align}
Hence $x^{t}$ and $y^{t}$ are normal, where $t=\ord(pq)$.

\begin{lemma}\label{lem:anti}
The assignment $x\mapsto y$, $y\mapsto x$, $z\mapsto z$ extends to an
anti-automorphism $\tau$ of $\mathcal H_{p,q}$. It interchanges the bidegrees $(u,v)$ and
$(v,u)$, and $\tau(\theta)=\theta$.
\end{lemma}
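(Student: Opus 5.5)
The plan is to define $\tau$ on the free algebra, check that it kills the defining ideal, and then read off bijectivity, the bidegree swap and $\tau(\theta)=\theta$ directly.

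First define $\tau$ on the free algebra $\K\langle x,y,z\rangle$ as the unique anti-homomorphism with $x\mapsto y$, $y\mapsto x$, $z\mapsto z$. An anti-homomorphism sends a two-sided ideal to a two-sided ideal. So it descends to an anti-endomorphism of $\mathcal H_{p,q}$ once the images of the three relators in \eqref{eq:rel} vanish in $\mathcal H_{p,q}$. I would check these one by one, reversing the order of each product:
\begin{align*}
\tau(zx-p^{-1}xz)&=yz-p^{-1}zy, & \tau(zy-pyz)&=xz-pzx, & \tau(yx-qxy-z)&=yx-qxy-z .
\end{align*}
These vanish for the following reasons:
\begin{itemize}
\item the first vanishes because $zy=pyz$ in $\mathcal H_{p,q}$;
\item the second vanishes because $zx=p^{-1}xz$;
\item the third is exactly the original relator.
\end{itemize}
This is the only point needing care: one must keep the $z$-commutation relations in the right direction. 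No condition on $p,q$ is used.

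Next, $\tau^{2}$ is a composite of two anti-endomorphisms, hence an algebra endomorphism. It fixes $x$, $y$ and $z$, so $\tau^{2}=\mathrm{id}$. Therefore $\tau$ is bijective, i.e.\ an anti-automorphism.

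For the grading, consider a word of bidegree $(u,v)$ for \eqref{eq:bigrad}. Under $\tau$, $x$ becomes $y$, $y$ becomes $x$, and $z$ is kept; note $\deg z=(1,1)$ is symmetric. So the image is a word of bidegree $(v,u)$, and hence $\tau$ maps the $(u,v)$-component onto the $(v,u)$-component.

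Finally, applying $\tau$ to \eqref{eq:theta} gives
\begin{align*}
\tau(\theta)=\tau(x)\tau(y)-p^{-1}\tau(y)\tau(x)=yx-p^{-1}xy=\theta .
\end{align*}
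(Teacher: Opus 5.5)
Your proposal is correct and follows the same route as the paper: check that the reversed relators $yz-p^{-1}zy$, $xz-pzx$ and $yx-qxy-z$ vanish, get bijectivity from $\tau^{2}=\mathrm{id}$ on generators, and read off $\tau(\theta)=\theta$ directly. Your explicit word-by-word check of the bidegree swap is a detail the paper leaves implicit, and it is correct.
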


\begin{proof}
Applying $\tau$ to the defining relations and reversing products turns
$zx-p^{-1}xz$ into $yz-p^{-1}zy$, $zy-pyz$ into $xz-pzx$, and $yx-qxy-z$ into
itself; all three hold in $\mathcal H_{p,q}$. Since $\tau^{2}$ is the identity on generators,
$\tau$ is bijective, and $\tau(\theta)=yx-p^{-1}xy=\theta$.
\end{proof}

We next record the diagonal automorphisms of $\mathcal H_{p,q}$ for arbitrary $p,q\in\K^\times$.

\begin{lemma}\label{lem:phi}
Let $p,q\in\K^{\times}$. For $a,b\in\K^{\times}$ the assignment
\begin{align}
\varphi_{a,b}(x)=ax,\qquad
\varphi_{a,b}(y)=by,\qquad
\varphi_{a,b}(z)=ab\,z
\label{eq:phi}
\end{align}
extends to a $\Z^{2}$-graded automorphism of $\mathcal H_{p,q}$, and
$(a,b)\mapsto\varphi_{a,b}$ is an injective group homomorphism
$(\K^{\times})^{2}\to\Aut(\mathcal H_{p,q})$. Moreover $\varphi_{a,b}(\theta)=ab\,\theta$
and
\begin{align}
\varphi_{a,b}\bigl|_{(\mathcal H_{p,q})_{(u,v)}}=a^{u}b^{v}
\qquad\text{for every }(u,v)\in\Z^{2}_{\geq0}.
\label{eq:peso}
\end{align}
\end{lemma}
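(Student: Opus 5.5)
The plan is to define $\varphi_{a,b}$ on the free algebra $\K\langle x,y,z\rangle$ and check that it preserves the defining ideal of \eqref{eq:rel}. Concretely, let $\tilde\varphi$ be the algebra endomorphism of the free algebra with $x\mapsto ax$, $y\mapsto by$, $z\mapsto ab\,z$. Each relation is $\Z^2$-homogeneous for \eqref{eq:bigrad}, and every monomial of bidegree $(u,v)$ is rescaled by $a^ub^v$. For example:
\begin{itemize}
\item $zx-p^{-1}xz$ has bidegree $(2,1)$ and goes to $a^2b\,(zx-p^{-1}xz)$;
\item $yx-qxy-z$ has bidegree $(1,1)$ and goes to $ab\,(yx-qxy-z)$.
\end{itemize}
The point to watch is that $z$ has the same bidegree as $xy$; this is exactly why we set $\varphi(z)=ab\,z$. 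So $\tilde\varphi$ maps the ideal into itself and descends to an endomorphism $\varphi_{a,b}$ of $\mathcal H_{p,q}$. No hypothesis on $p,q$ is used.

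Next come the group structure and bijectivity. On generators $\varphi_{a,b}\varphi_{c,d}=\varphi_{ac,bd}$ and $\varphi_{1,1}=\mathrm{id}$, hence these identities hold on all of $\mathcal H_{p,q}$. In particular $\varphi_{a^{-1},b^{-1}}$ is a two-sided inverse, so each $\varphi_{a,b}$ is an automorphism and $(a,b)\mapsto\varphi_{a,b}$ is a group homomorphism. For injectivity, if $\varphi_{a,b}=\mathrm{id}$ then $ax=x$ and $by=y$. Since $x$ and $y$ are nonzero PBW basis elements, this forces $a=b=1$.

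For \eqref{eq:peso}, the homogeneous component $(\mathcal H_{p,q})_{(u,v)}$ is spanned by the images of monomials in $x,y,z$ of bidegree $(u,v)$, and each such monomial is scaled by $a^ub^v$. Using the PBW basis $z^ix^jy^k$ with bidegree $(i+j,i+k)$ gives the same conclusion. This also shows $\varphi_{a,b}$ is $\Z^2$-graded. Finally, $\theta=yx-p^{-1}xy$ lies in bidegree $(1,1)$, so $\varphi_{a,b}(\theta)=ab\,\theta$.

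There is no real obstacle here. The only point requiring care is the homogeneity of the third relation with respect to the bigrading, which determines the scalar $ab$ on $z$.
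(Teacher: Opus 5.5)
Your proposal is correct and follows essentially the same route as the paper: verify the relations, compose on generators to get the homomorphism and inverses, and read off the weight $a^{u}b^{v}$ from monomials of bidegree $(u,v)$. The only difference is cosmetic. You derive preservation of all three relations at once from their bihomogeneity, since the free-algebra map scales each monomial of bidegree $(u,v)$ by $a^{u}b^{v}$, whereas the paper checks each relation by hand and computes the weight on the PBW monomials $x^{\alpha}z^{\beta}y^{\gamma}$.
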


\begin{proof}
The relations \eqref{eq:rel} are preserved:
\begin{align*}
\varphi_{a,b}(z)\varphi_{a,b}(x)&=a^{2}b\,zx=p^{-1}a^{2}b\,xz
=p^{-1}\varphi_{a,b}(x)\varphi_{a,b}(z),\\
\varphi_{a,b}(z)\varphi_{a,b}(y)&=ab^{2}\,zy=p\,ab^{2}\,yz
=p\,\varphi_{a,b}(y)\varphi_{a,b}(z),\\
\varphi_{a,b}(y)\varphi_{a,b}(x)-q\,\varphi_{a,b}(x)\varphi_{a,b}(y)
&=ab(yx-qxy)=ab\,z=\varphi_{a,b}(z).
\end{align*}
On generators $\varphi_{a,b}\varphi_{c,d}=\varphi_{ac,bd}$, so the assignment is
multiplicative and each $\varphi_{a,b}$ is bijective with inverse
$\varphi_{a^{-1},b^{-1}}$; it is injective because $\varphi_{a,b}=\mathrm{id}$
forces $a=b=1$. For \eqref{eq:peso}, a PBW monomial $x^{\alpha}z^{\beta}y^{\gamma}$
of bidegree $(u,v)$ has $\alpha=u-\beta$ and $\gamma=v-\beta$, so
\begin{align*}
\varphi_{a,b}(x^{\alpha}z^{\beta}y^{\gamma})
=a^{u-\beta}(ab)^{\beta}b^{v-\beta}\,x^{\alpha}z^{\beta}y^{\gamma}
=a^{u}b^{v}\,x^{\alpha}z^{\beta}y^{\gamma},
\end{align*}
and these monomials span $(\mathcal H_{p,q})_{(u,v)}$; outside $\Z^{2}_{\geq0}$
the components vanish. Finally $\theta$ has
bidegree $(1,1)$ by \eqref{eq:theta} and \eqref{eq:bigrad}.
\end{proof}

From now on $p$ and $q$ are roots of unity. Put
\begin{align}
L=\{(i,j)\in\Z^{2}:p^{i}=q^{j}\},\qquad
L_{\geq0}=L\cap\Z^{2}_{\geq0},\qquad
E=\{i+j:(i,j)\in L_{\geq0}\}.
\label{eq:L}
\end{align}

\begin{lemma}\label{lem:aritmetica}
Let $p,q$ be roots of unity with $pq\neq1$, let $\Gamma=\langle p,q\rangle$,
$\ell=|\Gamma|=\mathrm{lcm}(m,n)$ and $t=\ord(pq)$, and let $L$, $L_{\geq0}$
and $E$ be as in \eqref{eq:L}. Then $\gcd E=\ell/t$. Moreover, if $(r,s)$ is any
pair of nonnegative integers such that $p^{r}=q^{s}$ generates
$\langle p\rangle\cap\langle q\rangle$, then $\gcd(m,n,r+s)=\ell/t$; in
particular that number does not depend on the choice of $(r,s)$. Finally,
writing
\begin{align}
\Gamma=\langle g\rangle,\qquad p=g^{A},\qquad q=g^{B},
\label{eq:g}
\end{align}
and $e=\ell/t$, one has
\begin{align*}
\mathrm{lcm}(n,t)=\ell,\qquad e=\gcd(\ell,A+B),\qquad \gcd(e,A)=\gcd(e,B)=1 .
\end{align*}
\end{lemma}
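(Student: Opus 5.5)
Everything reduces to the cyclic group $\Gamma=\langle g\rangle$ of order $\ell$ with $p=g^{A}$, $q=g^{B}$. The lattice becomes $L=\{(i,j):Ai\equiv Bj \pmod \ell\}$, and $pq=g^{A+B}$ gives $t=\ell/\gcd(\ell,A+B)$. This yields $e=\ell/t=\gcd(\ell,A+B)$ at once.

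For the coprimality claims, suppose a prime $r$ divides $e$ and $A$. Then $r\mid B$, so $g^{A}$ and $g^{B}$ lie in the proper subgroup $\langle g^{r}\rangle$ (proper because $r\mid\ell$). This contradicts $\Gamma=\langle p,q\rangle$. The same argument gives $\gcd(e,B)=1$.

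For $\mathrm{lcm}(n,t)=\ell$, note that $\langle q,pq\rangle=\langle p,q\rangle=\Gamma$. In a cyclic group the subgroup generated by elements of orders $n$ and $t$ has order $\mathrm{lcm}(n,t)$, so $\mathrm{lcm}(n,t)=\ell$.

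For $\gcd E=e$, I would argue both divisibilities.

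First, $e\mid E$. If $(i,j)\in L$, then $Ai\equiv Bj\pmod{\ell}$, hence modulo $e$. Since $B\equiv -A\pmod e$, this gives $A(i+j)\equiv0\pmod e$, and $\gcd(e,A)=1$ forces $e\mid i+j$. So $e$ divides every element of $E$, and indeed $i+j\in e\mathbb Z$ for all of $L$.

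Second, $\gcd E\mid e$. Since $L\supseteq m\mathbb Z\times n\mathbb Z$, every $(i,j)\in L$ can be shifted into $L_{\geq0}$ by adding multiples of $(m,0)$ and $(0,n)$. Hence $\gcd E$ divides $m$, $n$ and $i+j$ for every $(i,j)\in L$. It therefore suffices to find $(i,j)\in L$ with $\gcd(m,n,i+j)=e$.

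For the second divisibility, this is where the statement about $(r,s)$ enters. Let $p^{r}=q^{s}$ generate $\langle p\rangle\cap\langle q\rangle$, which has order $h=\gcd(m,n)$.
\begin{itemize}
\item[(a)] \emph{$L$ is generated by $(m,0)$, $(0,n)$ and $(r,s)$.} Given $(i,j)\in L$, the element $p^{i}=q^{j}$ lies in the intersection, so it equals $(p^{r})^{k}$ for some $k$. Then $i\equiv kr\pmod m$ and $j\equiv ks\pmod n$.
\item[(b)] \emph{The gcd is independent of the choice.} By (a), the ideal generated by $m$, $n$ and the values $i+j$ on $L$ equals $(m,n,r+s)$. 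So $\gcd(m,n,r+s)$ equals $\gcd(m,n,E)$, which is $\gcd E$ by the shifting remark, and this does not depend on $(r,s)$.
\item[(c)] \emph{It remains to show $\gcd(m,n,r+s)\mid e$.}
\end{itemize}

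I expect step (c) to be the main obstacle. Put $d=\gcd(m,n,r+s)$. I would show that $pq$ has order dividing $\ell/d$, which gives $t\mid \ell/d$ and hence $d\mid e$.

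Let $s'=n-s$, so that $s'\equiv r\pmod d$ and $q^{-s'}=p^{r}$. Because $d\mid m$ and $d\mid n$, the quotient $\Gamma/\Gamma^{d}$ (where $\Gamma^{d}$ is the subgroup of $d$-th powers) is cyclic of order $d$. Map to it. The relation $p^{r}q^{s'}=1$ combined with $s'\equiv r \pmod d$ gives $(pq)^{r}\in\Gamma^{d}$.

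A cleaner route is via the exponents $A,B$ modulo $d$. From $Ar\equiv Bs\pmod\ell$ and $d\mid\ell$ we get $(A+B)r\equiv B(r+s)\equiv0\pmod d$. Then:
\begin{itemize}
\item if $\gcd(r,d)=1$, we get $d\mid A+B$, so $d\mid\gcd(\ell,A+B)=e$, as required;
\item if $\gcd(r,d)\neq1$, I would use the generation property of $(r,s)$ together with the coprimality lemma to rule this out. A common prime factor of $r$ and $d$ would also divide $s$, and then $p^{r}$ could not generate the intersection when that prime divides $h$.
\end{itemize}
I expect this final case check to be the delicate point.
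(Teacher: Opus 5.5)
Your proof is correct in substance, but it reaches $\gcd E\mid e$ by a detour the paper avoids, and the one case you leave open in step~(c) is asserted rather than proved.

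\textbf{The paper's route.} The paper never uses $(r,s)$ for this. Since $L_{\geq0}$ generates $L$, the image $\pi(L)$ of $(i,j)\mapsto i+j$ is $\gcd E\cdot\Z$. Moreover $c\in\pi(L)$ if and only if $(pq)^{i}=q^{c}$ for some $i$, that is, $q^{c}\in\langle pq\rangle$. Since $\overline p=\overline q^{-1}$ in $\Gamma/\langle pq\rangle$, this quotient is cyclic of order $\ell/t$ and generated by $\overline q$. So $\pi(L)=(\ell/t)\Z$ in one stroke, intrinsically, with no exponents $A,B$, no choice of pair and no case analysis. The $(r,s)$ claim then follows from exactly your step~(a). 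The identities in $A,B$ follow from $n=\ell/\gcd(B,\ell)$, $t=\ell/\gcd(A+B,\ell)$ and $\gcd(A,B,\ell)=1$.

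Your own arguments for those identities and for $e\mid\gcd E$ are fine. Do rename the prime you call $r$ in the coprimality step, since $r$ is already taken.

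\textbf{Step~(c) is unnecessary.} Even inside your framework you can skip it. Note that $(i,e-i)\in L$ if and only if $(A+B)i\equiv Be\pmod{\ell}$. This congruence is solvable because $\gcd(A+B,\ell)=e$ divides $Be$. Hence $e\in\pi(L)$, so $\gcd E\mid e$ by your shifting remark, and then (b) gives $\gcd(m,n,r+s)=e$ for every admissible pair. What your route adds over the paper's is only the side fact that $\gcd(r,\gcd(m,n,r+s))=1$.

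\textbf{Closing the open case.} If you keep step~(c), the open case closes in one line. What closes it is an order count, not the coprimality $\gcd(e,A)=1$.
\begin{itemize}
\item[(1)] Suppose a prime $\rho$ divides both $r$ and $d=\gcd(m,n,r+s)$. Then $\rho$ divides $s$, $m$ and $n$.
\item[(2)] So $p^{r}=q^{s}$ lies in $\langle p^{\rho}\rangle\cap\langle q^{\rho}\rangle$. In the cyclic group $\Gamma$ this is the subgroup of order $\gcd(m/\rho,n/\rho)=h/\rho$, where $h=\gcd(m,n)$.
\item[(3)] Hence $p^{r}$ cannot generate $\langle p\rangle\cap\langle q\rangle$, which has order $h$.
\end{itemize}
Therefore $\gcd(r,d)=1$ for every admissible pair, and your first case always applies. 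Your qualifier ``when that prime divides $h$'' is automatic, since $\rho\mid d\mid h$.

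The abandoned paragraph with $s'=n-s$ and $\Gamma/\Gamma^{d}$ only reaches $(pq)^{r}\in\Gamma^{d}$. It needs the same fact $\gcd(r,d)=1$ to conclude $pq\in\Gamma^{d}$, so you can simply drop it.
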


\begin{proof}
Let $\pi:\Z^{2}\to\Z$, $\pi(i,j)=i+j$. Since $(m,0),(0,n)\in L_{\geq0}$, every
element of $L$ becomes nonnegative after adding a suitable combination of them,
so $L_{\geq0}$ generates $L$ as a group and $\pi(L)=\gcd E\cdot\Z$. Now
$c\in\pi(L)$ means $p^{i}=q^{c-i}$ for some $i$, that is $(pq)^{i}=q^{c}$; so
$c\in\pi(L)$ if and only if $q^{c}\in\langle pq\rangle$. In
$\overline\Gamma:=\Gamma/\langle pq\rangle$ we have
$\overline p=\overline q^{-1}$, hence
$\overline\Gamma=\langle\overline q\rangle$ and
$|\overline\Gamma|=\ord(\overline q)=\ell/t$. Therefore $\pi(L)=(\ell/t)\Z$.

For the second claim, $(m,0),(0,n),(r,s)\in L_{\geq0}$ gives
$\gcd E\mid\gcd(m,n,r+s)$. Conversely, if $(i,j)\in L$ then
$p^{i}\in\langle p\rangle\cap\langle q\rangle=\langle p^{r}\rangle$, so
$i\equiv rk\pmod m$ and $j\equiv sk\pmod n$ for a common $k\in\Z$; hence
$i+j\equiv(r+s)k$ modulo $\gcd(m,n)$, and $\gcd(m,n,r+s)$ divides $i+j$. For the last claim, $n=\ell/\gcd(B,\ell)$ and $t=\ell/\gcd(A+B,\ell)$, so
$e=\gcd(\ell,A+B)$ and
\begin{align*}
\mathrm{lcm}(n,t)=\frac{\ell}{\gcd\bigl(\gcd(B,\ell),\gcd(A+B,\ell)\bigr)}
=\frac{\ell}{\gcd(A,B,\ell)}=\ell .
\end{align*}
If a prime $\rho$ divides $e$ and $A$, then $\rho\mid\ell$ and
$\rho\mid A+B$, hence $\rho\mid B$ and $\rho\mid\gcd(A,B,\ell)=1$; so
$\gcd(e,A)=1$, and symmetrically $\gcd(e,B)=1$.
\end{proof}

\noindent From now on we write $e:=\ell/t$.

\begin{lemma}[{\cite[Lemma~1.9]{ChanGaddisWonZhang2025}}]\label{lem:19}
Let $A$ be prime and finitely generated as a module over its center.
\begin{enumerate}[\rm (i)]
\item Every $\sigma\in\Oz(A)$ equals $\eta_a$ for some regular normal
$a\in A$, where $\eta_a(w)=a^{-1}wa$.
\item If in addition $A$ is a domain graded by an ordered abelian group, then
$a$ may be taken homogeneous.
\item If $A$ is a $\Z^{n}$-graded domain, then $\Oz_{\mathrm{gr}}(A)=\Oz(A)$.
\end{enumerate}
\end{lemma}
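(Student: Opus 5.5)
The plan is to realize $\sigma$ inside the simple Artinian ring of fractions of $A$, where Skolem--Noether applies, and then pull the conjugating element back into $A$. For (ii) and (iii) I would compare the highest and lowest homogeneous components of the conjugating element.

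\textbf{Part (i).} Since $A$ is prime and module-finite over its center $Z$, it is a prime PI ring. By Posner's theorem, the central localization $Q=A\otimes_Z K$, with $K=\operatorname{Frac}Z$, is a central simple $K$-algebra, and every element of $Q$ has the form $uc^{-1}$ with $u\in A$ and $0\neq c\in Z$.

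\begin{enumerate}[\rm (a)]
\item An element $\sigma\in\Oz(A)$ fixes $Z$ pointwise, so it extends to a $K$-algebra automorphism of $Q$.
\item By Skolem--Noether, this extension is $w\mapsto u^{-1}wu$ for some unit $u\in Q$.
\item Write $u=ac^{-1}$ with $a\in A$ and $c\in Z\setminus\{0\}$. Since $c$ is central, conjugation by $a$ equals conjugation by $u$, so $\sigma=\eta_a$.
\item The element $a$ is a unit of $Q$, hence regular in $A$.
\item The identity $\sigma(w)=a^{-1}wa$ gives $wa=a\sigma(w)$ for all $w\in A$. Because $\sigma(A)=A$, this yields $Aa=a\sigma(A)=aA$, so $a$ is normal.
\end{enumerate}

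\textbf{Part (ii).} Take $a$ as in (i) and decompose it into homogeneous components with respect to the ordered group. Let $a_+$ be the highest nonzero component and $a_-$ the lowest.

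\begin{enumerate}[\rm (a)]
\item Fix a homogeneous $w$ of degree $d$ and write $s=\sigma(w)$.
\item Since $A$ is a domain, leading components multiply. Comparing highest components in $wa=as$ gives $wa_+=a_+s_+$, where $s_+$ is the top component of $s$.
\item Comparing lowest components gives $wa_-=a_-s_-$.
\item Degree counting in the first equation gives $\deg s_+=d$, and in the second $\deg s_-=d$. Hence $s$ is homogeneous of degree $d$.
\item Therefore $wa_+=a_+\sigma(w)$ for every homogeneous $w$, and by linearity for every $w\in A$.
\item As in (i), this shows $Aa_+=a_+\sigma(A)=a_+A$, so $a_+$ is a regular normal homogeneous element with $\sigma=\eta_{a_+}$.
\end{enumerate}

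The delicate point is step (d). One must check that the top and bottom comparisons are legitimate, which uses that in a domain graded by an ordered group the leading term of a product is the product of the leading terms. Once that holds, the fact that $\sigma$ is graded comes for free.

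\textbf{Part (iii).} Equip $\Z^n$ with the lexicographic order, which makes it an ordered abelian group. Then the argument of (ii) applies, and it showed along the way that $\sigma$ preserves degrees, so every $\sigma\in\Oz(A)$ is graded. This gives $\Oz(A)\subseteq\Oz_{\mathrm{gr}}(A)$. The reverse inclusion holds by definition, so $\Oz_{\mathrm{gr}}(A)=\Oz(A)$.
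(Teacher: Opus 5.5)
Your proof is correct. The paper gives no argument of its own and simply imports the lemma from \cite{ChanGaddisWonZhang2025}; your route is essentially the standard argument for it, namely Skolem--Noether in the Posner central localization, followed by comparing the top and bottom homogeneous components of $wa=a\sigma(w)$ in the domain, which gives both $\sigma(A_d)\subseteq A_d$ and $wa_+=a_+\sigma(w)$. Two points are left implicit, and both are harmless. First, nonzero central elements of a prime ring are regular, so $A$ embeds in $Q$. Second, $\sigma(A_d)=A_d$, and not just $\subseteq$, follows by applying the same argument to $\sigma^{-1}\in\Oz(A)$.
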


\begin{proposition}[Posner, {\cite[Theorem~13.6.5]{mcconnell}}, in the form of
{\cite[Proposition~2.1]{LiuWuZhu2026OzoneGroups}}]\label{prop:posner}
Let $A$ be a prime PI ring with center $Z$ and let $K=\mathrm{Frac}(Z)$. Then
$A\otimes_ZK$ is a central simple $K$-algebra. In particular
$\rk_ZA=\dim_K(A\otimes_ZK)=\PIdeg(A)^{2}$.
\end{proposition}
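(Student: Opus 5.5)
This is the classical Posner--Rowen theorem, which the paper cites in the form of Liu--Wu--Zhu. The plan is to reduce it to the structure theory of central localizations of prime PI rings.

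\emph{Step 1: $A\otimes_ZK$ is simple and central.} By Rowen's theorem, every nonzero ideal $I$ of a prime PI ring meets the center nontrivially, i.e.\ $I\cap Z\neq0$. This rests on central polynomials (Formanek--Razmyslov) being nonvanishing on $A$. Since $A$ is prime, nonzero central elements are regular, so $S=Z\setminus\{0\}$ is a central Ore set. The localization $Q=A S^{-1}=A\otimes_ZK$ is therefore prime PI, and its center is $K$. Any nonzero ideal $J$ of $Q$ contracts to a nonzero ideal $J\cap A$ of $A$. That contraction contains a nonzero central element $c$, which is invertible in $Q$. Hence $J=Q$, so $Q$ is simple with center $K$.

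\emph{Step 2: $Q$ is finite-dimensional over $K$.} $Q$ is a simple PI algebra, so Kaplansky's theorem applies: a primitive (in particular simple) PI algebra is finite-dimensional over its center. Thus $Q$ is central simple over $K$.

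\emph{Step 3: the rank formula.} Because $K$ is the fraction field of $Z$, we have $\rk_ZA=\dim_K(A\otimes_ZK)$ by definition. Passing to the algebraic closure, $Q\otimes_K\overline K\cong M_d(\overline K)$, so $\dim_KQ=d^2$. Moreover $Q$ and $M_d(\overline K)$ satisfy the same multilinear identities, and $A$ and $Q$ share identities since $Q$ is a central localization. So the minimal identity degree is $2d$ by Amitsur--Levitzki, which gives $d=\PIdeg A$.

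The main obstacle is Step 1, specifically Rowen's theorem that nonzero ideals meet the center. This requires producing a central polynomial and verifying that it evaluates nontrivially into any given ideal of a prime ring. Rather than reprove it, I would cite it, together with Kaplansky's theorem, from \cite{mcconnell}. For the down-up algebras here we never need more than the conclusion $\rk_ZA=\ell^2$.
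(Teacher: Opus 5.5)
Your argument is correct. It is essentially the standard proof behind the result the paper cites from McConnell--Robson; the paper itself gives no proof beyond that citation. In that proof, Rowen's theorem makes the central localization $A\otimes_ZK$ simple with center $K$, Kaplansky's theorem makes it finite-dimensional over $K$, and splitting over $\overline{K}$ together with Amitsur--Levitzki identifies $\dim_K(A\otimes_ZK)$ with $\PIdeg(A)^{2}$, exactly as in your Steps 1--3.
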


For $\alpha,\beta,\gamma\in\K$, the down-up algebra of Benkart and Roby
\cite{BenkartRoby1998} is
\begin{align*}
A(\alpha,\beta,\gamma)=\K\langle d,u\rangle/\bigl(d^{2}u-\alpha dud-\beta ud^{2}-\gamma d,\
du^{2}-\alpha udu-\beta u^{2}d-\gamma u\bigr),
\end{align*}
and $d\mapsto y$, $u\mapsto x$ identifies $A(\alpha,\beta,0)$ with the graded
down-up algebra $A(\alpha,\beta)$ of the introduction.

Let $A=A(\alpha,\beta)$ be a graded down-up algebra. We put
$\Delta=\sqrt{\alpha^{2}+4\beta}$ and let $\omega_{1},\omega_{2}$ be the roots of
$X^{2}-\alpha X-\beta$, so that $\omega_1+\omega_2=\alpha$ and $\omega_1\omega_2=-\beta$. When
$\beta\neq0$ the algebra is a noetherian domain
\cite[Main Theorem]{KirkmanMussonPassman1999Noetherian}, and
$\{x^{i}(yx)^{j}y^{k}\}$ is a PBW basis. Since $\mathcal H_{p,q}\cong A\bigl(p^{-1}+q,-p^{-1}q\bigr)$ and $X^{2}-(p^{-1}+q)X+p^{-1}q=(X-q)(X-p^{-1})$, the characteristic roots are $\omega_{1}=q$ and $\omega_{2}=p^{-1}$, and
the condition $pq\neq1$ corresponds to the condition $\Delta\neq0$. Conversely, given $A(\alpha,\beta)$ with $\beta\neq0$ and $\Delta\neq0$, we put
\begin{align*}
q:=\omega_{1},\qquad p:=\omega_{2}^{-1},
\end{align*}
so that $pq\neq1$ and $A(\alpha,\beta)\cong\mathcal H_{p,q}$. When $\omega_1,\omega_2$
are roots of unity, Lemma~\ref{lem:aritmetica} applies, with
$n=\ord(\omega_{1})$, $m=\ord(\omega_{2})$, $\ell=\mathrm{lcm}(m,n)$ and
$t=\ord(\omega_{1}\omega_{2}^{-1})$. We will use this correspondence throughout.

In the notation of \cite[Section~7]{KirkmanKuzmanovichZhang2015DownUp}, where
the roots are called $a,b$ and $\Omega_1=du-a\,ud$, $\Omega_2=du-b\,ud$, the
choice $a=q$, $b=p^{-1}$ gives
\begin{align*}
\Omega_1&=yx-qxy=z, & \Omega_2&=yx-p^{-1}xy=\theta,
\end{align*}
and the commutation rules of \cite[Lemma~7.1]{KirkmanKuzmanovichZhang2015DownUp}
are exactly the relations $zx=p^{-1}xz$, $zy=pyz$ of \eqref{eq:rel} and those of
\eqref{eq:theta}. We use this notation whenever we compare with their results.

\begin{remark}\label{rem:simetria}
The pair $(\alpha,\beta)$ depends only on the set of roots, so exchanging
$\omega_1$ and $\omega_2$ gives $\mathcal H_{p,q}\cong\mathcal H_{q^{-1},p^{-1}}$. Every
invariant computed below --- $\ell$, $t$, the Gorenstein condition, the
regularity condition and the Calabi--Yau condition --- is unchanged under
$(p,q)\mapsto(q^{-1},p^{-1})$.
Gaddis also showed that $\mathcal H_{p,q}\cong\mathcal H_{p',q'}$ forces
$(p',q')\in\{(p,q),(q,p),(p^{-1},q^{-1}),(q^{-1},p^{-1})\}$
\cite[Proposition~3.14]{Gaddis2016}. The other two pairs invert both
characteristic roots, which corresponds to passing to the opposite algebra, and
$\mathcal H_{p,q}\cong\mathcal H_{p,q}^{\mathrm{op}}$ through the anti-automorphism of
Lemma~\ref{lem:anti}. The invariants listed above are also unchanged under
$(p,q)\mapsto(q,p)$, as they should be.
\end{remark}

\begin{proposition}\label{prop:Hpq-downup}
Up to isomorphism, the algebras $\mathcal H_{p,q}$ with $p,q$ roots of unity
and $pq\neq1$ are precisely the noetherian PI graded down-up algebras
$A(\alpha,\beta)$.
\end{proposition}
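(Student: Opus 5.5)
We prove both inclusions of the correspondence. The dictionary set up above does most of the work, so the plan is mainly to fix which known results characterize the noetherian and PI properties of down-up algebras.

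\emph{First direction.} Start with $\mathcal H_{p,q}$, where $p,q$ are roots of unity and $pq\neq1$. As recalled above, $\mathcal H_{p,q}\cong A(\alpha,\beta)$ with $\alpha=p^{-1}+q$ and $\beta=-p^{-1}q\neq0$. Because $\beta\neq0$, the algebra is noetherian by \cite[Main Theorem]{KirkmanMussonPassman1999Noetherian}. For the PI property, use \cite[Propositions~3.4 and~3.11]{Gaddis2016}; their hypothesis is met since $pq\neq1$. Alternatively, argue directly: $x^{\ell},y^{\ell},z^{\ell}$ are central. For $x^\ell$ this follows from \eqref{eq:k1} together with \eqref{eq:pq}, since $t\mid\ell$ gives $\pqnum{\ell}=0$ and $q^\ell=1$; the case of $y^\ell$ is symmetric. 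By the PBW basis $z^ix^jy^k$, the algebra is then a finite module over $\K[x^\ell,y^\ell,z^\ell]$, and is therefore PI.

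\emph{Second direction.} Start with a noetherian PI graded down-up algebra $A(\alpha,\beta)$.
\begin{enumerate}[(a)]
\item Noetherian forces $\beta\neq0$. This is the converse part of \cite{KirkmanMussonPassman1999Noetherian}: for $\beta=0$ the algebra is not noetherian.
\item PI forces $\omega_1$ and $\omega_2$ to be roots of unity. The known PI criterion for down-up algebras (Zhao \cite{Zhao1999}; compare \cite{BeraMandalNandy2024}) states that $A(\alpha,\beta,\gamma)$ is module-finite over its center, equivalently PI when it is noetherian, exactly when both roots are roots of unity and $\omega_1\neq\omega_2$. If that citation is unavailable, argue directly with the normal elements $\Omega_1,\Omega_2$ from \cite[Lemma~7.1]{KirkmanKuzmanovichZhang2015DownUp}. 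The relations $\Omega_1 x=\omega_2^{-1}x\Omega_1$ and $\Omega_2 x=\omega_1^{-1}x\Omega_2$, up to the precise normalization, show that the subalgebra $\K\langle x,\Omega_i\rangle$ is a quantum plane. A quantum plane with parameter that is not a root of unity is not PI, and a subalgebra of a PI algebra is PI. This forces each $\omega_i$ to be a root of unity.
\item PI forces $\Delta\neq0$. Suppose $\omega_1=\omega_2=\omega$. Then $\Omega_1=\Omega_2$, and the algebra is a Jordan-type deformation. Here we argue as in Section~\ref{sec:pq1}: the algebra is not module-finite over its center, hence not PI (it is a noetherian domain, so PI would force finite rank over the center by Posner, Proposition~\ref{prop:posner}). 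Concretely, after rescaling it reduces to $\mathcal H_{p,p^{-1}}$. In that algebra the commutator relation becomes $\omega^{-1}$-twisted derivation-like, so $x$ and $y$ generate a quantum-Weyl-type algebra with $[\,\cdot\,]_{p,q}$ never vanishing, and no power of $x$ is central.
\end{enumerate}
Once (a)--(c) hold, set $q=\omega_1$ and $p=\omega_2^{-1}$. Then $p,q$ are roots of unity, $pq=\omega_1\omega_2^{-1}\neq1$, and $A(\alpha,\beta)\cong\mathcal H_{p,q}$ by the identification above.

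The main obstacle is step (c): showing that the case of a double root is not PI. This is exactly the failure of the condition $pq\neq1$ noted in the introduction. To handle it, first pass to $\mathcal H_{p,p^{-1}}$. There $z=\theta$ satisfies $zx=p^{-1}xz$ and $zy=pyz$, and \eqref{eq:k1} holds with $\pqnum{k}$ replaced by $kq^{k-1}$ in characteristic zero. This quantity never vanishes, so $[y,x^k]$ is never zero up to scalars. A degree argument on the PBW basis should then show that the center has too small a transcendence degree for module-finiteness. If this becomes messy, I would instead quote the Gelfand--Kirillov dimension argument from \cite{Zhao1999} or \cite{BeraMandalNandy2024}.
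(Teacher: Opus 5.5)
Your skeleton matches the paper's proof. Noetherianity is $\beta\neq0$ by \cite{KirkmanMussonPassman1999Noetherian}, PI is read off from a criterion on the characteristic roots, and then one sets $q=\omega_1$, $p=\omega_2^{-1}$. The criterion the paper uses is \cite[Theorem~2.4]{CarvalhoLompPusatYilmaz2010Injective}: a noetherian graded down-up algebra is PI if and only if its characteristic roots are distinct roots of unity. That one citation gives your (b) and (c) together, so with it your proof is complete and (c) is no obstacle.

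The version of the criterion you quote, for $A(\alpha,\beta,\gamma)$ with arbitrary $\gamma$, is false. For $\gamma\neq0$ and one root equal to $1$, the algebra is not PI even though its roots are distinct roots of unity (Remark~\ref{rem:gamma0}). Only $\gamma=0$ is needed here, so restrict the statement to that case.

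Your self-contained arguments elsewhere are correct. For the first direction, finiteness over $\K[x^{\ell},y^{\ell},z^{\ell}]$ works. For (b), the quantum planes generated by $x,z$ and by $x,\theta$ work, with independence of monomials from the PBW basis and Lemma~\ref{lem:comp}.

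The self-contained route to (c), however, has a genuine gap. The step ``not module-finite over its center, hence not PI'' is not a valid inference, because PI rings need not be finite over their centers. What Posner's theorem gives for a PI domain is finite rank, $\dim_K(A\otimes_ZK)<\infty$ with $K=\mathrm{Frac}(Z)$, and you never show that this rank is infinite. Neither ``no power of $x$ is central'' nor the degree argument you leave uncarried out does this.

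With Zhao's computation $Z=\K[z^{m}]$ (stated over $\C$), the repair is short. The elements $x$ and $z^{m}$ commute and are algebraically independent by the PBW basis. So the subfield $K(x)\subseteq A\otimes_ZK$ would be a finite extension of $K=\K(z^{m})$, which is absurd.

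You can also avoid the center entirely. Work in $\mathcal H_{q^{-1},q}$ with $\ord(q)=m$.
\begin{enumerate}[(1)]
\item Formula \eqref{eq:k1}, with $kq^{k-1}$ in place of $[k]_{p,q}$, gives $yx^{m}=x^{m}y+mq^{-1}x^{m-1}z$.
\item The element $X=x^{m}$ commutes with $x$ and with $z$.
\item In the quotient division ring, $D=\frac{q}{m}z^{-1}x^{1-m}y$ then satisfies $DX-XD=1$.
\item So $D$ and $X$ generate a nonzero quotient of the simple first Weyl algebra $A_1(\K)$, hence a copy of it, and $A_1(\K)$ is not PI in characteristic zero.
\item But the quotient division ring of a PI domain is its central localization $A\otimes_ZK$, which is PI. This is a contradiction.
\end{enumerate}
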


\begin{proof}
By \cite[Main Theorem]{KirkmanMussonPassman1999Noetherian},
$A(\alpha,\beta,\gamma)$ is noetherian if and only if $\beta\neq0$, and by
\cite[Theorem~2.4]{CarvalhoLompPusatYilmaz2010Injective}, a noetherian graded down-up algebra $A(\alpha,\beta)$ is PI if and only if
the roots of $X^{2}-\alpha X-\beta$ are distinct roots of unity.
 If $A(\alpha,\beta)$ is noetherian and PI, its characteristic
roots $\omega_1,\omega_2$ are therefore distinct roots of unity, and taking
$q=\omega_1$, $p=\omega_2^{-1}$ gives $pq\neq1$ and
$A(\alpha,\beta)\cong\mathcal H_{p,q}$. Conversely, if $p,q$ are roots of unity
and $pq\neq1$, then $\mathcal H_{p,q}\cong A(p^{-1}+q,-p^{-1}q)$ has
$\beta=-p^{-1}q\neq0$ and distinct characteristic roots $q$ and $p^{-1}$.
\end{proof}

\begin{remark}\label{rem:gamma0}
The hypothesis $\gamma=0$ matters. For $\gamma\neq0$ one may normalize
$\gamma=1$, and those algebras are filtered rather than graded; the PI criterion changes too: when one characteristic root equals $1$, the
algebra $A(\alpha,\beta,1)$ is no longer PI, even though both roots are distinct
roots of unity --- no power of $u$ is central, and the whole center sits in
degree zero of Zhao's $\Z$-grading, which has GK dimension two. We come back to
the case $\gamma\neq0$ elsewhere.  Everything below uses the grading and is
therefore restricted to $\gamma=0$. Note also that $p=1$ or $q=1$ is allowed,
as long as $pq\neq1$.
\end{remark}

Finally, the assignment $d\mapsto u$, $u\mapsto d$ extends to an
anti-automorphism $\tau$ of $A(\alpha,\beta,\gamma)$ for every $\gamma$: it
turns each defining relation into the other one. For $\gamma=0$ it is the map
of Lemma~\ref{lem:anti}.

\section{The center of \texorpdfstring{$\mathcal H_{p,q}$}{Hpq}}\label{sec:center}

Throughout this section $p,q\in\K^{\times}$ are roots of unity with $pq\neq1$,
and $m,n,\ell,t,e$ and $L,L_{\geq0},E$ are as in \eqref{eq:L} and
Lemma~\ref{lem:aritmetica}. Bera, Mandal and Nandy
\cite[Theorem~3.1]{BeraMandalNandy2024} showed that $\PIdeg(\mathcal H_{p,q})=\ell$.

\subsection{The center as a bigraded space}

Recall the $\Z^{2}$-grading \eqref{eq:bigrad}, for which $\deg\theta=(1,1)$ as
well. Since $z$ is normal, the monomials $x^{a}z^{b}y^{c}$ form a PBW basis just
as $z^{a}x^{b}y^{c}$ do, and we use the former.

\begin{lemma}\label{lem:factor}
Assume $pq\neq1$. For every $k\geq1$,
\begin{align*}
\prod_{j=0}^{k-1}\bigl(\theta-(pq)^{j}z\bigr)
&=q^{\binom{k}{2}}(q-p^{-1})^{k}\,x^{k}y^{k}.
\end{align*}
In particular,
\begin{align*}
\theta^{k}
&\equiv q^{\binom{k}{2}}(q-p^{-1})^{k}\,x^{k}y^{k}
\pmod{z\mathcal H_{p,q}},\\
\bigl(\theta^{t}-z^{t}\bigr)^{\ell/t}
&=q^{\binom{\ell}{2}}(q-p^{-1})^{\ell}\,x^{\ell}y^{\ell}.
\end{align*}
\end{lemma}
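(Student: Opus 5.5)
The plan is to prove the product formula by induction on $k$, peeling off one factor $\theta-(pq)^{k}z$ at a time. The two special cases then follow formally. Throughout put $P_k:=x^{k}y^{k}$, which has bidegree $(k,k)$.

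\emph{Commutation of $P_k$.} First I check that $\theta$ and $z$ commute with $P_k$. From \eqref{eq:theta} we have $\theta y^{k}=q^{-k}y^{k}\theta$ and $\theta x^{k}=q^{k}x^{k}\theta$, so $\theta P_k=P_k\theta$. Likewise $zy^{k}=p^{k}y^{k}z$ and $zx^{k}=p^{-k}x^{k}z$ by \eqref{eq:rel}, so $zP_k=P_kz$. Hence every polynomial in $\theta$ and $z$ commutes with $P_k$, and the factors of the product may be placed freely on either side of $P_k$.

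\emph{Key identity.} Next I would establish
\begin{align*}
(q-p^{-1})\,x^{k+1}y^{k+1}=q^{-k}\bigl(\theta-(pq)^{k}z\bigr)x^{k}y^{k}.
\end{align*}
To get it, substitute $\theta=(q-p^{-1})xy+z$ into $x^{k}\theta y^{k}$, which gives $(q-p^{-1})x^{k+1}y^{k+1}+x^{k}zy^{k}$. Then rewrite both $x^{k}\theta y^{k}$ and $x^{k}zy^{k}$ by moving the middle factor to the front. This uses $x^{k}\theta=q^{-k}\theta x^{k}$ and $x^{k}z=p^{k}zx^{k}$, so $x^{k}\theta y^{k}=q^{-k}\theta P_k$ and $x^{k}zy^{k}=p^{k}zP_k$. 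Solving for $(q-p^{-1})x^{k+1}y^{k+1}$ gives $(q^{-k}\theta-p^{k}z)P_k$, which is the key identity.

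\emph{Induction.} Multiplying the identity by $q^{k}$ gives $\bigl(\theta-(pq)^{k}z\bigr)P_k=q^{k}(q-p^{-1})P_{k+1}$. Starting from the case $k=1$, which is just $\theta-z=(q-p^{-1})xy$, induction yields the product formula. The constant accumulates as $q^{0+1+\dots+(k-1)}(q-p^{-1})^{k}=q^{\binom k2}(q-p^{-1})^{k}$. By the commutation step, the order of the factors is immaterial.

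\emph{The two consequences.} For the congruence: $z$ is normal, so $z\mathcal H_{p,q}$ is a two-sided ideal. Expanding the product, every term other than $\theta^{k}$ contains a factor $z$, hence lies in that ideal, and so the product is $\equiv\theta^{k}$ modulo $z\mathcal H_{p,q}$.

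For the last equality, take $k=\ell$. Since $t\mid\ell$ and $\zeta:=pq$ is a primitive $t$-th root of unity, the exponents $j=0,\dots,\ell-1$ run through each residue class modulo $t$ exactly $\ell/t$ times. Because $\theta$ and $z$ commute, the factorization $\prod_{j<t}(X-\zeta^{j}Y)=X^{t}-Y^{t}$ in the commutative ring $\K[X,Y]$ can be specialized to $X=\theta$, $Y=z$. This turns the product into $(\theta^{t}-z^{t})^{\ell/t}$.

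The only step needing care is the key identity, specifically getting the scalars $q^{-k}$ and $p^{k}$ right when $\theta$ and $z$ are moved past $x^{k}$. The rest is bookkeeping.
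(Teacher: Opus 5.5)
Your proof is correct and is essentially the paper's: the paper writes $x^{k}y^{k}=c^{-1}x^{k-1}(\theta-z)y^{k-1}$ and moves $\theta-z$ to the right past $y^{k-1}$, whereas you move it to the left past $x^{k}$, which gives the same factor $q^{-k}\theta-p^{k}z$ and the same induction. The two consequences are obtained exactly as in the paper, by reducing modulo $z\mathcal H_{p,q}$ and by grouping the $\ell$ factors according to the $t$-th roots of unity $(pq)^{j}$.
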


\begin{proof}
Put $c=q-p^{-1}$, which is nonzero because $pq\neq1$. Since $\theta-z=c\,xy$ by
\eqref{eq:theta}, $\theta y=q^{-1}y\theta$ and $zy=p\,yz$,
\begin{align*}
x^{k}y^{k}
&=c^{-1}x^{k-1}(\theta-z)\,y^{k-1}=c^{-1}x^{k-1}y^{k-1}\bigl(q^{-(k-1)}\theta-p^{k-1}z\bigr).
\end{align*}
Since $\theta$ and $z$ commute, induction on $k$ gives
\begin{align*}
x^{k}y^{k}
&=c^{-k}\prod_{j=0}^{k-1}\bigl(q^{-j}\theta-p^{j}z\bigr)\\
&=c^{-k}\prod_{j=0}^{k-1}q^{-j}\bigl(\theta-(pq)^{j}z\bigr)\\
&=q^{-\binom{k}{2}}c^{-k}\prod_{j=0}^{k-1}\bigl(\theta-(pq)^{j}z\bigr).
\end{align*}
The congruence follows by reducing modulo $z\mathcal H_{p,q}$. For the last
identity take $k=\ell$. As $j$ runs from $0$ to $\ell-1$, the scalar $(pq)^{j}$
runs over $\mu_t$ exactly $\ell/t$ times, and since $\theta$ and $z$ commute,
\begin{align*}
\prod_{\xi\in\mu_t}(\theta-\xi z)&=\theta^{t}-z^{t}.
\end{align*}
\end{proof}

\noindent The first identity of Lemma~\ref{lem:factor} is
\cite[Lemma~8.1]{KirkmanKuzmanovichZhang2015DownUp}, the term $u_k d_k = f_k(\Omega_1, \Omega_2)$ appearing there becomes our product formula. We kept the short proof because the case $k=\ell$, where the factors group into
$\theta^{t}-z^{t}$, is the one used below.

\begin{lemma}\label{lem:comp}
For $pq\neq 1$ and $u\geq v\geq0$ the elements $x^{u-v}z^{i}\theta^{j}$ with $i+j=v$ form a
basis of $(\mathcal H_{p,q})_{(u,v)}$; symmetrically for $v\geq u$ with
$z^{i}\theta^{j}y^{v-u}$ and $i+j=u$. In particular $z$ and $\theta$ commute
and are algebraically independent, so $\K[z,\theta]$ is a polynomial subalgebra
of $\mathcal H_{p,q}$.
\end{lemma}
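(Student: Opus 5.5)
The plan is to compare dimensions and then show the proposed elements are linearly independent. First I will compute $\dim(\mathcal H_{p,q})_{(u,v)}$ from the PBW basis $x^{a}z^{b}y^{c}$. A monomial $x^{a}z^{b}y^{c}$ has bidegree $(a+b,b+c)$. So for $u\geq v$ the monomials of bidegree $(u,v)$ are those with $b=0,\dots,v$, $a=u-b$, $c=v-b$, and the dimension is $v+1$. This equals the number of pairs $(i,j)$ with $i+j=v$. Each element $x^{u-v}z^{i}\theta^{j}$ has bidegree $(u-v,0)+i(1,1)+j(1,1)=(u,v)$. Hence it suffices to prove that these $v+1$ elements span $(\mathcal H_{p,q})_{(u,v)}$, or equivalently that they are linearly independent.

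To get spanning I will use Lemma~\ref{lem:factor}. There $x^{k}y^{k}$ is written as $c^{-k}q^{-\binom{k}{2}}\prod_{j<k}(\theta-(pq)^{j}z)$, which is a homogeneous polynomial of degree $k$ in the commuting elements $z,\theta$. A PBW monomial $x^{a}z^{b}y^{c}$ of bidegree $(u,v)$ with $u\geq v$ has $c=v-b\leq a=u-b$. I rewrite it as $x^{a-c}\,x^{c}z^{b}y^{c}$ and move $z^{b}$ out using $zy=pyz$, which gives, up to a nonzero scalar, $x^{u-v}\,x^{c}y^{c}z^{b}$. Substituting the formula for $x^{c}y^{c}$ turns this into $x^{u-v}$ times a polynomial of degree $v$ in $z,\theta$, which lies in the span of the $x^{u-v}z^{i}\theta^{j}$. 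So the $v+1$ proposed elements span a space of dimension $v+1$ and are therefore a basis.

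The case $v\geq u$ follows by the same argument, writing $x^{c'}y^{c'}$ on the left of $y^{v-u}$. Alternatively, apply the anti-automorphism $\tau$ of Lemma~\ref{lem:anti}. It swaps bidegrees, fixes $z$ and $\theta$, and sends $x^{v-u}z^{i}\theta^{j}$ to $\theta^{j}z^{i}y^{v-u}$; since $z$ and $\theta$ commute, this is $z^{i}\theta^{j}y^{v-u}$.

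For the final claim, commutativity of $z$ and $\theta$ is \eqref{eq:theta}. For algebraic independence, take $u=v$. The monomials $z^{i}\theta^{j}$ with $i+j=v$ are linearly independent in $(\mathcal H_{p,q})_{(v,v)}$, and monomials of distinct total degree lie in distinct bigraded components. Hence all monomials $z^{i}\theta^{j}$ are linearly independent. The main obstacle is only bookkeeping the scalars when commuting $z^{b}$ past $y$'s. Those scalars are nonzero powers of $p$, and the constant $c=q-p^{-1}$ is nonzero because $pq\neq1$, so no degeneracy occurs.
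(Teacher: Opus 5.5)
Your proof is correct and uses the same ingredients as the paper's: the PBW count $\dim(\mathcal H_{p,q})_{(u,v)}=\min(u,v)+1$, Lemma~\ref{lem:factor}, and $\tau$ for the case $v\geq u$. The difference is one of direction. The paper uses only $\theta^{j}\equiv c_{j}x^{j}y^{j}$ modulo $z\mathcal H_{p,q}$ to get a transition matrix that is triangular for the $z$-degree filtration, which gives linear independence. You instead use the exact product formula: writing $x^{a}z^{b}y^{c}=p^{bc}x^{u-v}(x^{c}y^{c})z^{b}$ expresses each PBW monomial as $x^{u-v}$ times a form of degree $v$ in $z,\theta$, which gives spanning and avoids the leading-term bookkeeping.
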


\begin{proof}
The PBW monomials of bidegree $(u,v)$ are $x^{u-b}z^{b}y^{v-b}$ with
$0\leq b\leq\min(u,v)$, so the dimension is $\min(u,v)+1$, which is the number
of proposed elements. By Lemma~\ref{lem:factor} we may write
$\theta^{j}=c_{j}x^{j}y^{j}+z(\cdots)$ with $c_{j}\neq0$, whence
\begin{align*}
x^{u-v}z^{i}\theta^{j}
=c_{j}p^{-ij}\,x^{u-v+j}z^{i}y^{j}+(\text{terms of $z$-degree}>i).
\end{align*}
For $i+j=v$ the leading PBW monomials are pairwise distinct, so the transition
matrix is triangular for the $z$-degree filtration. The case $v\geq u$ follows by applying $\tau$ of Lemma~\ref{lem:anti}.
\end{proof}

\begin{proposition}\label{prop:centro-bigrad}
$Z(\mathcal H_{p,q})$ has $\K$-basis
\begin{align*}
\{\,x^{\ell a}z^{i}\theta^{j}:a\geq0,\ (i,j)\in L_{\geq0}\,\}
\ \cup\
\{\,y^{\ell b}z^{i}\theta^{j}:b\geq1,\ (i,j)\in L_{\geq0}\,\}.
\end{align*}
\end{proposition}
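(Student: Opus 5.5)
The plan is to use the $\Z^{2}$-grading together with the diagonal automorphisms of Lemma~\ref{lem:phi}. The center is $\Z^{2}$-graded: each homogeneous component of a central element is central, since $x,y$ are homogeneous. So it suffices to determine, for each bidegree $(u,v)$, which elements of $(\mathcal H_{p,q})_{(u,v)}$ are central.

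By symmetry, using $\tau$ from Lemma~\ref{lem:anti} (it maps the center to itself and swaps bidegrees), we may assume $u\geq v$. Lemma~\ref{lem:comp} then lets us write a general element as $w=\sum_{i+j=v}c_{ij}\,x^{u-v}z^{i}\theta^{j}$.

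The key computation is how $x$ and $y$ act on such a monomial by commutation.

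\textbf{Commutation with $x$.} From $zx=p^{-1}xz$ and $\theta x=qx\theta$ we get $x^{u-v}z^{i}\theta^{j}\,x=p^{-i}q^{j}\,x\cdot x^{u-v}z^{i}\theta^{j}$. Since $x$ is regular and the $x^{u-v+1}z^i\theta^j$ are linearly independent (Lemma~\ref{lem:comp}), $[w,x]=0$ forces $c_{ij}(p^{-i}q^{j}-1)=0$. Hence only pairs with $p^{i}=q^{j}$, that is $(i,j)\in L_{\geq0}$, survive.

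\textbf{Commutation with $y$.} Use $zy=pyz$ and $\theta y=q^{-1}y\theta$. The term $x^{u-v}$ must be moved past $y$ using \eqref{eq:k1}: $yx^{k}=q^{k}x^{k}y+\pqnum{k}x^{k-1}z$. So $y\,x^{u-v}z^i\theta^j$ has a main part $q^{u-v}p^{i}q^{-j}\,x^{u-v}z^{i}\theta^{j}y$ and a correction $\pqnum{u-v}\,x^{u-v-1}z^{i+1}\theta^{j}$ times a scalar. The correction has bidegree $(u,v+1)$ and, by Lemma~\ref{lem:comp}, is independent of the main-part monomials.

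This is the step I expect to be the main obstacle. I must show that centrality forces $\pqnum{u-v}=0$ and $q^{u-v}=1$ whenever $w\neq0$, and then combine these with $(i,j)\in L$.

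\begin{itemize}
\item Writing $wy-yw=0$ in the basis of $(\mathcal H_{p,q})_{(u,v+1)}$ from Lemma~\ref{lem:comp} (elements $x^{u-v-1}z^{i'}\theta^{j'}$), the main part needs re-expressing. Here I would instead compare leading terms in the $z$-degree filtration of the PBW basis $x^{a}z^{b}y^{c}$.
\item Cleaner alternative: use the element of maximal $i$ with $c_{ij}\neq0$. Its correction term $x^{u-v-1}z^{i+1}\theta^j$ has $z$-degree exceeding anything coming from the $wy$ side, so its coefficient must vanish. This gives $\pqnum{u-v}=0$, i.e. $t\mid u-v$, whenever $u>v$.
\end{itemize}

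Once the corrections vanish, $x^{u-v}$ is normal, and centrality requires $q^{u-v}p^{i}q^{-j}=1$ for each surviving pair, i.e. $q^{u-v}=1$. Together with $(pq)^{u-v}=1$ this gives $p^{u-v}=1$ as well, so $\ell\mid u-v$. This yields exactly $x^{\ell a}z^{i}\theta^{j}$ with $(i,j)\in L_{\geq0}$.

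\textbf{Converse and conclusion.} For the converse, check directly that $x^{\ell}$ is central: it commutes with $y$ by \eqref{eq:k1} and \eqref{eq:pq}, and with $z$ and $\theta$ since $p^{\ell}=q^{\ell}=1$. Also check that $z^{i}\theta^{j}$ with $p^{i}=q^{j}$ commutes with $x$ and with $y$, via the scalars $p^{-i}q^{j}$ and $p^{i}q^{-j}$.

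The case $v>u$ is obtained by applying $\tau$, giving $y^{\ell b}z^{i}\theta^{j}$. Taking $b\geq1$ avoids double-counting the diagonal $u=v$. Linear independence of the listed set follows from Lemma~\ref{lem:comp}, since distinct elements either lie in distinct bidegrees or are distinct basis vectors within one bidegree.
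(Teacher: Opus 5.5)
\textbf{The step you flagged as the main obstacle is argued incorrectly, and everything after it depends on it.} Your reduction is the paper's: grading, $\tau$, writing $w=x^{u-v}g$ with $g\in\K[z,\theta]$ via Lemma~\ref{lem:comp}, and commuting with $x$ to put the support of $g$ in $L$.

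The trouble is the correction term $\pqnum{u-v}x^{u-v-1}z^{i+1}\theta^{j}$. It is \emph{not} independent of the main part, and it does \emph{not} have larger $z$-degree than everything coming from $wy$. The reason is that $x^{u-v}y=(q-p^{-1})^{-1}x^{u-v-1}(\theta-z)$, so $wy$ and the main part of $yw$ also produce $z^{i+1}\theta^{j}$. Take $k=u-v\geq1$, and note $gy=yg$ once the support lies in $L$. Then
\[
wy-yw=(q-p^{-1})^{-1}\,x^{k-1}\bigl((1-q^{k})\,\theta-(1-p^{-k})\,z\bigr)\,g .
\]
Let $i_{\max}$ be the largest $i$ with $c_{i,v-i}\neq0$. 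In the basis of Lemma~\ref{lem:comp}, the coefficient of $x^{k-1}z^{i_{\max}+1}\theta^{v-i_{\max}}$ is a nonzero multiple of $(1-p^{-k})\,c_{i_{\max},v-i_{\max}}$. Its vanishing gives $p^{k}=1$, not $\pqnum{k}=0$.

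The PBW reading fails as well. In bidegree $(u,v+1)$ the only monomial of top $z$-degree is $x^{k-1}z^{v+1}$. Its coefficient in $wy-yw$ is $-\pqnum{k}\sum_{i,j}c_{ij}$, which can vanish with $g\neq0$. So $\pqnum{u-v}=0$ is never established, and your next step (``once the corrections vanish, $x^{u-v}$ is normal, so $q^{u-v}=1$'') has nothing to stand on.

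\textbf{The fix is to factor rather than compare coefficients term by term, as the paper does.} Write $wy-yw=x^{k-1}\lambda\,g$ with $\lambda$ a linear form in $z,\theta$. Since $\mathcal H_{p,q}$ is a domain and $g\neq0$, $\lambda$ itself must vanish. Its $\theta$-coefficient gives $q^{k}=1$, and its $z$-coefficient then gives $\pqnum{k}=0$, equivalently $p^{k}=1$; hence $\ell\mid k$.

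If you prefer your leading-term idea, it can be rescued by using both extremes. The top $z$-power gives $p^{k}=1$, and the top $\theta$-power, whose coefficient is a nonzero multiple of $(1-q^{k})c_{i_{\min},v-i_{\min}}$, gives $q^{k}=1$.

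The converse, the use of $\tau$ and the linear independence are fine as you have them. There is one harmless sign slip: $y\,z^{i}\theta^{j}=p^{-i}q^{j}\,z^{i}\theta^{j}y$, not $p^{i}q^{-j}$. This does not matter once $(i,j)\in L$.
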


\begin{proof}
The center of a $\Z^{2}$-graded algebra is a graded subspace, so by
Lemma~\ref{lem:comp} it suffices to decide when $f=x^{a}g$ is central, with
$a=u-v\geq0$ and $g=\sum c_{ij}z^{i}\theta^{j}$ homogeneous of degree $v$ in
$\K[z,\theta]$; the case $v\geq u$ is symmetric by Lemma~\ref{lem:anti}.

From $z^{i}\theta^{j}x=p^{-i}q^{j}\,x\,z^{i}\theta^{j}$ we get
$fx=x^{a+1}\sum c_{ij}p^{-i}q^{j}z^{i}\theta^{j}$ while $xf=x^{a+1}g$, so
$fx=xf$ forces $p^{-i}q^{j}=1$, that is $(i,j)\in L$, throughout the support of
$g$.

On that support $z^{i}\theta^{j}y=p^{i}q^{-j}y\,z^{i}\theta^{j}
=y\,z^{i}\theta^{j}$, so $g$ commutes with $y$ and $fy=x^{a}yg$. If $a=0$ then $f=g$ is already central and $\ell\mid a$ holds trivially,
so assume $a\geq1$. On the other
hand \eqref{eq:k1} gives $yx^{a}=q^{a}x^{a}y+\pqnum{a}x^{a-1}z$, and
$x^{a}y=(q-p^{-1})^{-1}x^{a-1}(\theta-z)$ for $a\geq1$, so
\begin{align*}
fy-yf=x^{a-1}\Bigl[\tfrac{1-q^{a}}{q-p^{-1}}\,(\theta-z)-\pqnum{a}\,z\Bigr]g .
\end{align*}
Since $\mathcal H_{p,q}$ is a domain and $g\neq0$, this vanishes if and only if the bracket
does. The bracket is a linear form in the polynomial ring $\K[z,\theta]$: its
$\theta$-coefficient is $(1-q^{a})(q-p^{-1})^{-1}$, whose vanishing gives
$q^{a}=1$, and then its $z$-coefficient reduces to $-\pqnum{a}$, whose
vanishing gives $(pq)^{a}=1$ by \eqref{eq:pq}. So $n\mid a$ and $t\mid a$.
Hence $\ell\mid a$, by Lemma~\ref{lem:aritmetica}.

Conversely, an element $x^{\ell a}z^{i}\theta^{j}$ with $(i,j)\in L_{\geq0}$
commutes with $x$ and with $y$ by the computations above, and with $z$ because
$zx^{\ell a}=p^{-\ell a}x^{\ell a}z=x^{\ell a}z$; so it is central.
\end{proof}

\begin{corollary}\label{cor:soporte}
The support of $Z(\mathcal H_{p,q})$ in $\Z^{2}$ is
\begin{align*}
\{(\ell a+\varepsilon,\,\varepsilon):a\geq0,\ \varepsilon\in E\}
\ \cup\
\{(\varepsilon,\,\ell b+\varepsilon):b\geq1,\ \varepsilon\in E\},
\end{align*}
and $\gcd E=\ell/t=e$.
\end{corollary}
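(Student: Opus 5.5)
The support is read off directly from the basis in Proposition~\ref{prop:centro-bigrad}, and the gcd statement is already in Lemma~\ref{lem:aritmetica}.

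By Proposition~\ref{prop:centro-bigrad}, $Z(\mathcal H_{p,q})$ has a basis of bihomogeneous elements. The support is therefore exactly the set of bidegrees of these basis elements, since nonzero basis vectors of distinct bidegrees cannot cancel. We compute those bidegrees using \eqref{eq:bigrad} and $\deg\theta=(1,1)$ from \eqref{eq:theta}. For $x^{\ell a}z^{i}\theta^{j}$ the bidegree is $(\ell a,0)+(i+j)(1,1)=(\ell a+\varepsilon,\varepsilon)$ with $\varepsilon=i+j$. Symmetrically, $y^{\ell b}z^{i}\theta^{j}$ has bidegree $(\varepsilon,\ell b+\varepsilon)$.

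As $(i,j)$ ranges over $L_{\geq0}$, the value $\varepsilon=i+j$ ranges over exactly $E$, by the definition \eqref{eq:L}. Each basis element is nonzero, because $\mathcal H_{p,q}$ is a domain and $z,\theta$ are algebraically independent (Lemma~\ref{lem:comp}). Hence every listed bidegree is actually attained, and no other bidegree occurs. This gives the displayed union, with $a\geq0$ in the first set and $b\geq1$ in the second, exactly as in the basis.

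The final claim $\gcd E=\ell/t=e$ is the first assertion of Lemma~\ref{lem:aritmetica}, which we quote directly. The only point needing care is that the support is a set of bidegrees and not of monomials, so repeated bidegrees cause no issue. I expect no real obstacle here.
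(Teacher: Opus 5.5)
Your proposal is correct and is the same as the paper's proof. The support is read off from the bihomogeneous basis of Proposition~\ref{prop:centro-bigrad}, with $\varepsilon=i+j$ ranging over $E$, and the equality $\gcd E=e$ is quoted directly from Lemma~\ref{lem:aritmetica}.
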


\begin{proof}
The assertions follow directly from Proposition~\ref{prop:centro-bigrad} and Lemma~\ref{lem:aritmetica}, respectively.
\end{proof}

\begin{remark}[Comparison with Zhao's description of the center]
\label{rem:zhao}
The description of $Z(\mathcal H_{p,q})$ in
Proposition~\ref{prop:centro-bigrad} is consistent with
\cite[Theorem~1.3(f)]{Zhao1999}. Indeed, under the isomorphism
$\mathcal H_{p,q}\cong A(p^{-1}+q,-p^{-1}q)$, the characteristic
roots are $\omega_1=q$ and $\omega_2=p^{-1}$, and the corresponding normal
elements are
\begin{align*}
du-\omega_1ud=z,\qquad du-\omega_2ud=\theta.
\end{align*}
Consequently, the condition on the exponents of the central
monomials in Zhao's description becomes
$p^i=q^j$, which is precisely the defining condition of
the lattice $L$ in \eqref{eq:L}.

Our description additionally determines the bigraded components
of the center. Corollary~\ref{cor:generadores} gives a finite
generating set in terms of the Hilbert basis of $L_{\geq0}$,
while Proposition~\ref{prop:presentacion} provides an explicit
presentation of the center.
\end{remark}

\subsection{On the generators of the center}

The center of $\mathcal H_{p,q}$ is described in \cite[Section~4]{BeraMandalNandy2024}
as well: Theorem~4.1 there lists $z^{m}$, $\theta^{n}$, $x^{\ell}$, $y^{\ell}$
and $\Omega=z^{r}\theta^{s}$ as generators, where $(r,s)$ is a pair with
$p^{r}=q^{s}$ generating $\langle p\rangle\cap\langle q\rangle$. That list is
not always complete, the pair $(r,s)$
generates $L$ modulo $m\Z\times n\Z$ \emph{as a group}, whereas the central
monomials $z^{i}\theta^{j}$ are the lattice points of the positive quadrant, and
as a \emph{semigroup} those may need more generators. The next remark records
what has to be added, and Corollary~\ref{cor:generadores} gives the exact number
of generators required.

\begin{remark}\label{rem:erratum}
The outcome depends on the choice of $(r,s)$. Take $p=q=\zeta_3$, so $m=n=\ell=t=3$. With
$(r,s)=(1,1)$ one does get all of $Z(\mathcal H_{p,q})$. The pair $(r,s)=(2,2)$
is equally admissible, since $p^{2}=q^{2}$ generates
$\langle p\rangle\cap\langle q\rangle=\mu_3$, but then $z\theta$, which is
central of bidegree $(2,2)$, is missing: the subalgebra
$\K[z^{3},\theta^{3},x^{3},y^{3},z^{2}\theta^{2}]$ is zero in that bidegree, its
generators sitting in bidegrees $(3,3)$, $(3,0)$, $(0,3)$ and $(4,4)$. Since no
minimality on $(r,s)$ is imposed, the choice matters.

For some parameters no admissible choice works. Take $p=\zeta_5$ and $q=\zeta_5^{2}$, so
$m=n=\ell=t=5$ and $e=1$. Then
\begin{align*}
L_{\geq0}=\langle(2,1),\,(1,3),\,(5,0),\,(0,5)\rangle,
\end{align*}
a Hilbert basis with four elements, and the admissible pairs are exactly
$(2,1),(4,2),(1,3),(3,4)$. Each choice leaves out one of the two Hilbert basis
elements off the axes: for $(r,s)=(2,1)$ the element $z\theta^{3}$ is central
and is not in the subalgebra generated by the five, and for the other three
choices the same happens with $z^{2}\theta$. Indeed, in the grading of
$\K[z,\theta]$ the five elements contribute only in degrees $r+s$, $5$ and $10$,
whereas $z^{2}\theta$ and $z\theta^{3}$ sit in degrees $3$ and $4$, and
$Z(\mathcal H_{p,q})$ is one-dimensional in each of those two degrees. The same
happens for $q=\zeta_5^{3}$.

The number of generators changes as follows. Take $(r,s)=(2,1)$ in the example
above. The \emph{number} of generators is still five: by Corollary~\ref{cor:generadores}
the minimum is $|\mathcal B|+2-\delta=4+2-1=5$. What changes is the list, since
$z\theta^{3}$ has to be added and $\theta^{5}$ then becomes redundant, being
$\theta^{5}=z^{5}+c^{-1}x^{5}y^{5}$ by Lemma~\ref{lem:factor}. In general the
number is not five: it is four for $p=q$ (Corollary~\ref{cor:centro-Hq}), and it
is unbounded. For $\ell=16$, $p=\zeta_{16}^{-1}$ and $q=\zeta_{16}^{2}$, the
Hilbert basis of $L_{\geq0}$ is
\begin{align*}
\{(0,8),(2,7),(4,6),(6,5),(8,4),(10,3),(12,2),(14,1),(16,0)\},
\end{align*}
so $Z(\mathcal H_{p,q})$ needs exactly ten generators.

The discrepancy originates in \cite[(4.3)]{BeraMandalNandy2024}, where the
nonnegative solutions of $p^{a}=q^{b}$ are described as
$(a,b)=(rk+m\Z_{\geq0},\,sk+n\Z_{\geq0})$ with $k\in\Z_{\geq0}$. The congruences
$a\equiv rk\ (\mathrm{mod}\ m)$ and $b\equiv sk\ (\mathrm{mod}\ n)$ are correct,
but the representative $rk$ need not be $\leq a$: for $(a,b)=(1,3)$ one is forced
to take $k=3$, and then $rk=6>1$. The reduction step in the proof subtracts a
multiple of $\Omega^{k}(\theta^{n})^{k'}x^{u-v}$, and that is exactly where the
nonnegativity is needed.

Two things are unaffected. The PI degree computation
\cite[Theorem~3.1]{BeraMandalNandy2024} does not use this description, and
neither does any statement below that only uses $\PIdeg(\mathcal H_{p,q})=\ell$.
Let us also note that the generating set of \cite[Theorem~1.3(f)]{Zhao1999},
which predates \cite{BeraMandalNandy2024} and is not quoted there, agrees with
Proposition~\ref{prop:centro-bigrad}; see Remark~\ref{rem:zhao}.
\end{remark}

\begin{remark}\label{rem:rs}
The derivation of the pair $(r,s)$ carried out in
\cite[Section~4]{BeraMandalNandy2024} is not needed here. By
Lemma~\ref{lem:aritmetica}, $\gcd(m,n,r+s)=\ell/t=e$ for every admissible pair,
so that invariant does not depend on the choice, whereas the generating set
discussed above does. We keep $\Omega=z^{r}\theta^{s}$ only when comparing with
\cite{BeraMandalNandy2024}; the role of
\cite[Theorem~4.1]{BeraMandalNandy2024} is played throughout by
Proposition~\ref{prop:centro-bigrad}, and in Section~\ref{sec:normales} by
nothing at all.
\end{remark}

\subsection{A presentation of the center}
We now give an explicit presentation of $Z(\mathcal H_{p,q})$. Let
\[
R:=\K[L_{\geq0}]
=\bigoplus_{(i,j)\in L_{\geq0}}\K z^i\theta^j
\subseteq\K[z,\theta],
\]
and define
\begin{align}
F:=q^{-\binom{\ell}{2}}(q-p^{-1})^{-\ell}
\bigl(\theta^t-z^t\bigr)^{\ell/t}.
\label{eq:F}
\end{align}

\begin{proposition}\label{prop:presentacion}
For $F$ as in \eqref{eq:F} one has $F\in R$, and
\begin{align*}
Z(\mathcal H_{p,q})\;\cong\;R[U,V]/(UV-F),\qquad U\mapsto x^{\ell},\quad V\mapsto y^{\ell}.
\end{align*}
Moreover $R$ is a two-dimensional normal Cohen--Macaulay domain: with $g$, $A$
and $B$ as in \eqref{eq:g}, it is the invariant ring $\K[z,\theta]^{\mu_\ell}$
for the action \eqref{eq:accion} below. In particular $Z(\mathcal H_{p,q})$ is a
hypersurface over a cyclic quotient surface singularity, and it is a
Cohen--Macaulay domain of Krull dimension three.
\end{proposition}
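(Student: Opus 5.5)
Our plan is to read the presentation off Proposition~\ref{prop:centro-bigrad} and Lemma~\ref{lem:factor}. First, $F\in R$: by Lemma~\ref{lem:factor}, $F=x^{\ell}y^{\ell}$. This element is central by Proposition~\ref{prop:centro-bigrad}, since it is a product of the central elements $x^{\ell}$ and $y^{\ell}$ (the case $a=1$ and the symmetric case $b=1$ with $(i,j)=(0,0)$). It lies in $\K[z,\theta]$, and a central element of $\K[z,\theta]$ is a combination of monomials $z^i\theta^j$ with $(i,j)\in L$ by the first step of that proof. So $F\in R$.

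Next define $\Phi:R[U,V]\to Z(\mathcal H_{p,q})$ by $U\mapsto x^\ell$, $V\mapsto y^\ell$, and $R\ni z^i\theta^j\mapsto z^i\theta^j$. This is a well-defined algebra map because all images are central and $R$ is commutative. $\Phi$ kills $UV-F$ because $x^\ell y^\ell=F$. Surjectivity comes from the basis in Proposition~\ref{prop:centro-bigrad}. For $a\ge0$ we have $x^{\ell a}z^i\theta^j=\Phi(U^a z^i\theta^j)$, and $y^{\ell b}z^i\theta^j$ is handled the same way with $V$.

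For injectivity, note that modulo $UV-F$ every element of $R[U,V]$ can be written uniquely as $\sum_{a\ge0}U^a r_a+\sum_{b\ge1}V^b s_b$ with $r_a,s_b\in R$. This holds because $R[U,V]/(UV-F)$ is free over $R$ on $\{U^a\}\cup\{V^b\}_{b\ge1}$; the relation is monic in the monomial $UV$, so division is clean. The images of these normal-form monomials times monomials $z^i\theta^j$ are exactly the basis elements of Proposition~\ref{prop:centro-bigrad}, so they are linearly independent and $\Phi$ is injective. We need to track bidegrees only to be sure that $x^{\ell a}z^i\theta^j$ is a PBW-independent family, and that is Lemma~\ref{lem:comp}.

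For the structure of $R$, we use \eqref{eq:g}: $p=g^A$, $q=g^B$ with $\Gamma=\mu_\ell=\langle g\rangle$. The condition $p^i=q^j$ becomes $g^{Ai-Bj}=1$, i.e.\ $Ai-Bj\equiv0\pmod\ell$. Hence $R=\K[z,\theta]^{\mu_\ell}$ for the diagonal action $\zeta\cdot z=\zeta^{A}z$, $\zeta\cdot\theta=\zeta^{-B}\theta$; this is the action \eqref{eq:accion}. Invariants of a finite group acting on a polynomial ring in characteristic zero form a normal Cohen--Macaulay domain (Hochster--Eagon), of dimension two, and this is a cyclic quotient surface singularity. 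Then $R[U,V]$ is Cohen--Macaulay of dimension four. The element $UV-F$ is a nonzero element of the domain $R[U,V]$, hence regular, so the quotient is Cohen--Macaulay of dimension three. It is a domain because it is isomorphic to $Z(\mathcal H_{p,q})\subseteq\mathcal H_{p,q}$.

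The main obstacle is the injectivity/normal-form step, specifically making sure the identification of the basis really matches. This is the point to state carefully: the $R$-module decomposition of $R[U,V]/(UV-F)$, checked by reducing any $U^aV^b$ with $a,b\ge1$ to $U^{a-b}F^{b}$ or $V^{b-a}F^{a}$.
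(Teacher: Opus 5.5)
Your proposal is correct and follows the paper's proof essentially step by step. Surjectivity comes from the basis of Proposition~\ref{prop:centro-bigrad}, injectivity from matching the $R$-basis $\{U^{a}\}_{a\geq0}\cup\{V^{b}\}_{b\geq1}$ of $R[U,V]/(UV-F)$ with that basis, and Cohen--Macaulayness of the quotient from $UV-F$ being a nonzerodivisor on $R[U,V]$. The differences are minor and both routes are valid: you deduce $F\in R$ from the centrality of $F=x^{\ell}y^{\ell}$ rather than by checking $p^{ti}=q^{\ell-ti}$ on the monomials $z^{ti}\theta^{\ell-ti}$, and you get normality and Cohen--Macaulayness of $R$ from Hochster--Eagon for finite-group invariants rather than from $R$ being a normal semigroup ring.
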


\begin{proof}
The monomials of $(\theta^{t}-z^{t})^{\ell/t}$ are $z^{ti}\theta^{\ell-ti}$, and
$p^{ti}=q^{\ell-ti}$ because $q^{\ell}=1$ and $(pq)^{ti}=1$; hence $F\in R$, and
Lemma~\ref{lem:factor} says exactly that $x^{\ell}y^{\ell}=F$. Since $x^{\ell}$,
$y^{\ell}$ and $R$ are central and commute with one another, sending
$U\mapsto x^{\ell}$, $V\mapsto y^{\ell}$ and fixing $R$ defines an algebra map
$R[U,V]\to Z(\mathcal H_{p,q})$ which kills $UV-F$. It is surjective by
Proposition~\ref{prop:centro-bigrad}, every basis element there being
$U^{k}z^{i}\theta^{j}$ or $V^{k}z^{i}\theta^{j}$ with $(i,j)\in L_{\geq0}$. For
injectivity, $R[U,V]/(UV-F)$ is free over $R$ on
$\{U^{k}\}_{k\geq0}\cup\{V^{k}\}_{k\geq1}$, and those elements times the
monomials of $R$ map bijectively onto the basis of
Proposition~\ref{prop:centro-bigrad}.

For the last statement, $L_{\geq0}=L\cap\Z^{2}_{\geq0}$ is saturated in $L$, so
$R$ is a normal semigroup ring, hence Cohen--Macaulay. The map
$\Z^{2}\to\Gamma$, $(i,j)\mapsto p^{i}q^{-j}=g^{Ai-Bj}$, is surjective with
kernel $L$, so it grades $\K[z,\theta]$ by $\Gamma\cong\mu_\ell$ with $R$ as its
degree-zero part; the corresponding $\mu_\ell$-action is \eqref{eq:accion}, and
it is faithful because $\gcd(A,B,\ell)=1$. Finally $UV-F$ is a nonzero divisor on
the Cohen--Macaulay ring $R[U,V]$ of dimension four, so the quotient is
Cohen--Macaulay of dimension three, and it is a domain because
$Z(\mathcal H_{p,q})$ sits inside the domain $\mathcal H_{p,q}$.
\end{proof}

\noindent Explicitly, the action just described is
\begin{align}
\zeta\cdot z=\zeta^{A}z,\qquad \zeta\cdot\theta=\zeta^{-B}\theta
\qquad(\zeta\in\mu_\ell),
\label{eq:accion}
\end{align}
and it is the one used again in Subsection~\ref{sec:gorenstein}.

\begin{corollary}\label{cor:generadores}
Let $\mathcal B$ be the Hilbert basis of $L_{\geq0}$, and let $\delta=1$ if
$(tk,\ell-tk)\in\mathcal B$ for some $0\leq k\leq e$, and $\delta=0$ otherwise. Then
\begin{align*}
Z(\mathcal H_{p,q})=\K\bigl[x^{\ell},\ y^{\ell},\ z^{i}\theta^{j}\ \ ((i,j)\in\mathcal B)\bigr],
\end{align*}
and $Z(\mathcal H_{p,q})$ needs exactly $|\mathcal B|+2-\delta$ homogeneous generators.
In particular $\delta=1$ whenever $m=\ell$ or $n=\ell$.
\end{corollary}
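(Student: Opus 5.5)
Generation follows from Proposition~\ref{prop:centro-bigrad}: every basis element there is $x^{\ell a}z^i\theta^j$ or $y^{\ell b}z^i\theta^j$ with $(i,j)\in L_{\geq0}$. Such a $(i,j)$ is a nonnegative combination of elements of $\mathcal B$, so $z^i\theta^j$ is a scalar multiple of a product of the $z^{i'}\theta^{j'}$ with $(i',j')\in\mathcal B$, because $z$ and $\theta$ commute. Hence the displayed set generates.

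For minimality I would grade $Z=R[U,V]/(UV-F)$ by the $\Z^2$-bidegree and work with the irrelevant ideal $Z_+$. The minimal number of homogeneous generators is $\dim_\K Z_+/Z_+^2$, and I would compute it via Proposition~\ref{prop:presentacion}. In $R[U,V]$ the space of indecomposables has basis $U$, $V$ and the monomials $z^i\theta^j$ with $(i,j)\in\mathcal B$; by definition of the Hilbert basis these are exactly the indecomposables of the normal semigroup ring $R$. Passing to the quotient by $UV-F$ changes this only if the relation has a nonzero linear part, that is, if $F\notin R_+^2+(U,V)$-stuff. Concretely, $UV\in\mathfrak m^2$, so the relation kills exactly one indecomposable when $F$, expanded in monomials of $R$, has a monomial component that is a Hilbert basis element.

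By \eqref{eq:F}, $F$ is a nonzero multiple of $(\theta^t-z^t)^{e}$. Its monomials are $z^{tk}\theta^{\ell-tk}$ for $0\le k\le e$, all with nonzero binomial coefficients. If one of them lies in $\mathcal B$, then $x^\ell y^\ell=F$ expresses that generator, modulo decomposables, as a combination of decomposable elements. Thus $\dim Z_+/Z_+^2=|\mathcal B|+2-1$. I would also check that at most one such monomial can lie in $\mathcal B$, otherwise the count drops further. All monomials of $F$ have the same total degree $\ell$, and in degree $\ell$ the relation is a single linear equation, so at most one indecomposable is killed. Hence the count is right as long as the bigraded reasoning is phrased degreewise; I expect this bookkeeping to be the main obstacle, since $F$ is not bihomogeneous in the $\K[z,\theta]$ exponents but is homogeneous of bidegree $(\ell,\ell)$ in $\mathcal H_{p,q}$. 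If no monomial of $F$ lies in $\mathcal B$, then $F\in R_+^2$, the relation lies in $\mathfrak m^2$, and the count is $|\mathcal B|+2$.

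For the last claim: if $m=\ell$, the point $(m,0)=(\ell,0)$ (the case $k=e$) lies in $L_{\geq0}$ and is a Hilbert basis element. Any decomposition would require a point $(i,0)\in L$ with $0<i<m$, meaning $p^i=1$, which is impossible. Similarly, if $n=\ell$ then $(0,\ell)$, the case $k=0$, is in $\mathcal B$. In either case $\delta=1$.
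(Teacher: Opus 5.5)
Your proposal is correct and follows the same route as the paper. Generation comes from the monomial basis of the center. The count comes from $Z_+/Z_+^2$: apart from $R_+^2$ and the off-diagonal classes of $x^\ell,y^\ell$, the only new element of $\mathfrak m^2$ is $F=x^\ell y^\ell$ in bidegree $(\ell,\ell)$, and $F\notin R_+^2$ exactly when $\delta=1$.

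Your side worry that at most one monomial of $F$ may lie in $\mathcal B$ is misplaced; for $p=q=\zeta_3$ both $(3,0)$ and $(0,3)$ do. It is also harmless, since, as you then note, the relation removes only the single class $\bar F$ from $R_+/R_+^2$, so the dimension drops by at most one.
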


\begin{proof}
Generation follows from Proposition~\ref{prop:presentacion}. Let $\mathfrak m$ and $R_{+}$
be the irrelevant ideals of $Z(\mathcal H_{p,q})$ and $R$. Off the diagonal, $\mathfrak m/\mathfrak m^{2}$
is spanned by the classes of $x^{\ell}$ and $y^{\ell}$. On the diagonal, the only products
involving $x^{\ell}$ or $y^{\ell}$ are the elements $x^{\ell a}y^{\ell a}r=F^{a}r$ with $r\in R$, so
\begin{align*}
(\mathfrak m^{2})_{(k,k)}&=(R_{+}^{2})_{k}\qquad (k\neq\ell),\\
(\mathfrak m^{2})_{(\ell,\ell)}&=(R_{+}^{2})_{\ell}+\K F,\\
F&=c\sum_{k=0}^{e}(-1)^{k}\binom{e}{k}\,z^{tk}\theta^{\ell-tk},\qquad c\neq0 .
\end{align*}
The monomials $z^{i}\theta^{j}$ with $(i,j)\in\mathcal B$ form a basis of $R_{+}/R_{+}^{2}$, so
$F\notin R_{+}^{2}$ exactly when $\delta=1$. If $n=\ell$, no $(0,j)$ with $0<j<\ell$ lies in $L$,
hence $(0,\ell)\in\mathcal B$; the case $m=\ell$ is the same.
\end{proof}

For the one-parameter algebra the answer is completely explicit, and it is
shorter than the list of \cite[Theorem~4.1]{BeraMandalNandy2024}.

\begin{corollary}\label{cor:centro-Hq}
Let $q$ be a primitive $m$-th root of unity, $m\geq3$. Exchanging $x$ and $y$
identifies $\mathcal H_q$ with $\mathcal H_{q,q}$, and in the generators of
\eqref{eq:rel}
\begin{align*}
Z(\mathcal H_q)=\K\bigl[z\theta,\ z^{m},\ x^{m},\ y^{m}\bigr],
\end{align*}
and four generators are needed. Writing
$c=q^{-\binom{m}{2}}(q-q^{-1})^{-m}$,
\begin{align*}
\theta^{m}&=z^{m}+c^{-1}x^{m}y^{m} &&(m\ \text{odd}),\\
\theta^{m}&=2(z\theta)^{m/2}-z^{m}+c^{-1}x^{m}y^{m} &&(m\ \text{even}),
\end{align*}
and $Z(\mathcal H_q)$ is the hypersurface in $\mathbb A^{4}$ cut out by
$z^{m}\theta^{m}=(z\theta)^{m}$ after substituting these expressions.
\end{corollary}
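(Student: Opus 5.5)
The plan is to reduce everything to Proposition~\ref{prop:presentacion} and Corollary~\ref{cor:generadores}, using the fact that for $p=q$ the lattice $L$ is as simple as possible.

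\textbf{Step 1: the identification.} Apply the swap $x\leftrightarrow y$ to the relations $zx=qxz$, $yz=qzy$, $xy-qyx=z$ of $\mathcal H_q$. They become $zy=qyz$, $xz=qzx$ (that is, $zx=q^{-1}xz$), and $yx-qxy=z$. These are exactly the relations \eqref{eq:rel} with $p=q$. Since $m\geq3$, we have $pq=q^{2}\neq1$, so all earlier results apply.

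\textbf{Step 2: the invariants.} Here $m=n=\ell$. Also $t=\ord(q^{2})$, which equals $m$ for $m$ odd and $m/2$ for $m$ even, so $e\in\{1,2\}$. The lattice is $L=\{(i,j):i\equiv j\pmod m\}$.

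\textbf{Step 3: the Hilbert basis.} I claim $\mathcal B=\{(1,1),(m,0),(0,m)\}$. Take $(i,j)\in L_{\geq0}$ with $i\geq j$. Then
\[
(i,j)=j(1,1)+\tfrac{i-j}{m}(m,0),
\]
and the case $i\leq j$ is symmetric. Each of the three elements is clearly irreducible in $L_{\geq0}$.

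\textbf{Step 4: generators and their number.} By Corollary~\ref{cor:generadores}, $Z(\mathcal H_q)$ is generated by $x^{m}$, $y^{m}$, $z\theta$, $z^{m}$, $\theta^{m}$. Moreover $\delta=1$, since $n=\ell$ (concretely, $(0,m)=(t\cdot0,\ell-0)\in\mathcal B$). So exactly $3+2-1=4$ generators are needed. The redundant one is $\theta^{m}$, because $F$ contains the monomial $\theta^{m}$ with nonzero coefficient. The explicit expression comes from Lemma~\ref{lem:factor}, which gives $(\theta^{t}-z^{t})^{e}=c^{-1}x^{m}y^{m}$ with $c$ as in the statement.
\begin{itemize}
\item For $m$ odd, $e=1$ and this is $\theta^{m}=z^{m}+c^{-1}x^{m}y^{m}$.
\item For $m$ even, $e=2$ and $t=m/2$. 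Expanding $(\theta^{m/2}-z^{m/2})^{2}$ and using $z^{m/2}\theta^{m/2}=(z\theta)^{m/2}$ (as $z,\theta$ commute) gives the second formula.
\end{itemize}

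\textbf{Step 5: the hypersurface.} This is the step needing the most care. The ring $R=\K[L_{\geq0}]$ is $\K[a,b,w]/(bw-a^{m})$, with $a=z\theta$, $b=z^{m}$, $w=\theta^{m}$. By Proposition~\ref{prop:presentacion}, $Z\cong R[U,V]/(UV-F)$, so
\[
Z\cong \K[a,b,w,U,V]/(bw-a^{m},\ UV-F).
\]
Using Step 4, $F$ has the form $F=c^{-1}w+G(a,b)$, with $w$ appearing linearly with a unit coefficient. Therefore the relation $UV-F$ can be solved for $w$: we get $w=\phi(a,b,U,V)$, where $\phi$ is exactly the expression for $\theta^{m}$ in Step 4. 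Eliminating $w$ leaves the single relation
\[
b\,\phi(a,b,U,V)=a^{m}\quad\text{in }\K[a,b,U,V].
\]
This is the equation $z^{m}\theta^{m}=(z\theta)^{m}$ after substitution, and it cuts out a hypersurface in $\mathbb A^{4}$, of dimension three as expected.

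The only points needing care are checking that $F$ is linear in $w$ with unit coefficient, so that the elimination is an honest isomorphism, and keeping the normalizing constant $c$ consistent with \eqref{eq:F}.
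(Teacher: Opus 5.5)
Your proposal is correct and follows essentially the paper's route: the Hilbert basis $\{(1,1),(m,0),(0,m)\}$, Corollary~\ref{cor:generadores} with $\delta=1$ for the count of four, Lemma~\ref{lem:factor} for the expression of $\theta^{m}$, and elimination of $\theta^{m}$ in the presentation of Proposition~\ref{prop:presentacion}, which you make more explicit than the paper does by writing $R\cong\K[a,b,w]/(bw-a^{m})$. One small slip: with $F$ as in \eqref{eq:F} one has $F=c(\theta^{t}-z^{t})^{e}$, so the coefficient of $w$ in $F$ is $c$, not $c^{-1}$. Only its nonvanishing matters, and solving $UV=F$ for $w$ then gives exactly the formulas of your Step~4.
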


\begin{proof}
Here $p=q$, so $L=\{(i,j):i\equiv j\ (\mathrm{mod}\ m)\}$, whose Hilbert basis
is $\mathcal B=\{(1,1),(m,0),(0,m)\}$ and $R=\K[z\theta,z^{m},\theta^{m}]$. Also
$t=m/\gcd(m,2)$ and $e=\gcd(m,2)$, so $F=c(\theta^{t}-z^{t})^{e}$ expands as
displayed, using $z^{m/2}\theta^{m/2}=(z\theta)^{m/2}$ when $m$ is even. In both
cases $\theta^{m}$ lies in $\K[z\theta,z^{m},x^{m}y^{m}]$, which gives
generation by the four listed elements. Since $(0,m)\in\mathcal B$ we have
$\delta=1$, and Corollary~\ref{cor:generadores} gives
$|\mathcal B|+2-\delta=4$. The relation $z^{m}\theta^{m}=(z\theta)^{m}$ holds in
$\K[z,\theta]$, and eliminating $\theta^{m}$ turns it into a single equation in
$z\theta,z^{m},x^{m},y^{m}$; by Proposition~\ref{prop:presentacion} there are no
others. Note $q\neq q^{-1}$, since $m\geq3$; for $m=2$ one would have
$pq=q^{2}=1$, the case of Section~\ref{sec:pq1}.
\end{proof}
\begin{remark}\label{rem:LV}
This closes a computation left open in \cite[Theorem~3.5]{LV20}. There
$\mathcal H_q$ is viewed as a skew PBW extension (for the skew PBW structure of
$\mathcal H_{p,q}$ over $\K[z]$ see also \cite[Section~2]{LopesSuarezSuarez2025}), with generators $x,y,z$ and
relations $yx=qxy$, $zy=qyz$, $zx=q^{-1}xz+y$, and it is shown that
$C=(q^{2}-1)xyz-y^{2}$ and the $m$-th powers of $x$, $y$, $z$ are central. Whether
they generate the center was not decided. Sending $x\mapsto x$, $y\mapsto z$,
$z\mapsto y$ identifies that algebra with $\mathcal H_{q^{-1},q^{-1}}$ in the
notation of \eqref{eq:rel},
and $C$ goes to $-z\theta$. So those four elements are, up to sign, the
generators of Corollary~\ref{cor:centro-Hq}, and the inclusion of
\cite{LV20} is an equality. The case $m=2$ has to be left out there: for $q=-1$
we have $pq=1$, neither $x^{2}$ nor $z^{2}$ is central in the notation of
\cite{LV20}, and the center is $\K[z^{2}]$ in ours by Proposition~\ref{prop:pq1}.
\end{remark}

\subsection{Regularity of the center}

Since $\mathcal H_{p,q}$ is module-finite over its center and $\GKdim\mathcal H_{p,q}=3$ (it is a bijective skew PBW extension of $\K[z]$ in $x,y$, see \cite[Section~2]{LopesSuarezSuarez2025} and, for skew PBW extensions in general, \cite{FGLRSV20}, with $\sigma_x(z)=pz$, $\sigma_y(z)=p^{-1}z$ and no derivations, so the frame $\K+\K z$ is preserved and \cite[Theorem~2.1(xiv)]{LV20c} gives $\GKdim\mathcal H_{p,q}=\GKdim\K[z]+2$),  the center $Z:=Z(\mathcal H_{p,q})$ is an affine domain of Krull dimension three. Moreover, $Z$ is connected $\mathbb N$-graded, with homogeneous maximal ideal $Z_+=\bigoplus_{d>0}Z_d$. Consequently, $Z$ is regular if and only if
\[
\dim_{\K}(Z_+/Z_+^2)=3,
\]
or equivalently, if and only if $Z$ admits three homogeneous algebra generators.

\begin{proposition}\label{prop:regular}
$Z(\mathcal H_{p,q})$ is regular if and only if $p=1$ or $q=1$, and then
\begin{align*}
Z(\mathcal H_{1,q})=\K[z,\,x^{n},\,y^{n}],\qquad
Z(\mathcal H_{p,1})=\K[\theta,\,x^{m},\,y^{m}],
\end{align*}
polynomial rings in three variables.
\end{proposition}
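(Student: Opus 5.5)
We count minimal homogeneous generators of $Z$: by the discussion just before the statement, $Z$ is regular exactly when $\dim_\K Z_+/Z_+^2=3$. Corollary~\ref{cor:generadores} gives this number as $|\mathcal B|+2-\delta$. So regularity is equivalent to $|\mathcal B|-\delta=1$. Since $L_{\geq0}$ spans a full-rank lattice in the positive quadrant, its Hilbert basis always contains a point on each axis, namely $(m',0)$ and $(0,n')$ with $m',n'$ minimal. Hence $|\mathcal B|\ge 2$, and since $\delta\le1$, regularity forces $|\mathcal B|=2$ and $\delta=1$.

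First, $|\mathcal B|=2$ means $L_{\geq0}$ is generated by the two axis points. Then $L=m'\Z\times n'\Z$, where $m'=\min\{i>0:p^i\in\langle q\rangle\}$ and so on. Equivalently, $\langle p\rangle\cap\langle q\rangle$ is trivial, i.e.\ $h':=|\langle p\rangle\cap\langle q\rangle|=1$, so $m'=m$ and $n'=n$. Indeed, if the intersection were nontrivial, any admissible $(r,s)$ with $0<r<m$, $0<s<n$ would give a lattice point off $m\Z\times n\Z$, reducible to a point of the open quadrant, which is not a combination of the axis generators. In that case $\mathcal B=\{(m,0),(0,n)\}$, $\ell=mn$, and the map $(i,j)\mapsto p^iq^{-j}$ shows $\Gamma\cong\mu_m\times\mu_n$.

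Second, we need $\delta=1$: some point $(tk,\ell-tk)$ with $0\le k\le e$ must lie in $\mathcal B=\{(m,0),(0,n)\}$. This requires $\ell=n$ (taking $k=0$) or $\ell=m$ (taking $k=e$, so that $tk=\ell$). Combined with $\ell=mn$, this gives $m=1$ or $n=1$, i.e.\ $p=1$ or $q=1$. I expect the main obstacle to be getting this combinatorial bookkeeping of $\mathcal B$ and $\delta$ exactly right. A cleaner route may be to argue directly: $F\in\mathfrak m^2$ unless $F$ is a scalar multiple of a Hilbert basis monomial up to lower terms, and for $F=c(\theta^t-z^t)^e$ with more than one monomial this requires some axis monomial $\theta^\ell$ or $z^\ell$ to be indecomposable.

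Conversely, suppose $p=1$ and $q\ne1$, so $m=1$ and $\ell=n=t$. Then $L=\{(i,j):q^j=1\}=\Z\times n\Z$, $\mathcal B=\{(1,0),(0,n)\}$, and $F=c(\theta^n-z^n)$ contains $\theta^n$ with a nonzero coefficient. Hence $\theta^n\in\K[z,x^ny^n]$, and Corollary~\ref{cor:generadores} gives $Z=\K[z,x^n,y^n]$. This is generated by three elements and has Krull dimension three, so it is a polynomial ring. The case $q=1$ follows by the symmetry $(p,q)\mapsto(q^{-1},p^{-1})$ of Remark~\ref{rem:simetria}, which interchanges $z$ and $\theta$; alternatively, the same computation applies with $L=m\Z\times\Z$, giving $Z=\K[\theta,x^m,y^m]$.
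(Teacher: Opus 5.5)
Your proposal is correct and is essentially the paper's proof: both use Corollary~\ref{cor:generadores} to reduce regularity to $|\mathcal B|=2$ and $\delta=1$, identify $|\mathcal B|=2$ with $\langle p\rangle\cap\langle q\rangle=1$ (so $\ell=mn$), read off $\delta=1$ iff $m=1$ or $n=1$, and obtain the presentation by eliminating $\theta^{n}=z^{n}+c^{-1}x^{n}y^{n}$. One small tidy-up: since $(i,0)\in L$ iff $p^{i}=1$, the axis points of $\mathcal B$ are always $(m,0)$ and $(0,n)$, so your auxiliary $m'=\min\{i>0:p^{i}\in\langle q\rangle\}$ is unnecessary; it describes the projection of $L$ rather than its intersection with the axis, and it agrees with $m$ only once the intersection is known to be trivial.
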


\begin{proof}
Both $(m,0)$ and $(0,n)$ lie in $\mathcal B$: a decomposition of $(m,0)$ inside
$L_{\geq0}$ uses only pairs $(i,0)$, and those have $m\mid i$. So
$|\mathcal B|=2$ forces $L_{\geq0}=\langle(m,0),(0,n)\rangle$, hence
$L=m\Z\times n\Z$, which says $\langle p\rangle\cap\langle q\rangle=1$, that is
$\gcd(m,n)=1$; the converse is clear.

If $\gcd(m,n)>1$, then $|\mathcal B|\geq3$ and Corollary~\ref{cor:generadores}
gives at least $3+2-1=4$ generators.

If $\gcd(m,n)=1$, then $\ord(pq)=mn$, so $t=\ell=mn$ and $e=1$, and the only
pairs $(tk,\ell-tk)$ available in Corollary~\ref{cor:generadores} are $(0,\ell)$
and $(\ell,0)$. Now $(0,mn)$ is indecomposable exactly when $m=1$, and $(mn,0)$
exactly when $n=1$; so $\delta=1$ precisely when $p=1$ or $q=1$, the number of
generators being $4-\delta$.

For the presentations take $p=1$, where $L=\Z\times n\Z$, so $R=\K[z,\theta^{n}]$
and $F=c(\theta^{n}-z^{n})$ with $c\neq0$ by \eqref{eq:F} and
Lemma~\ref{lem:factor}. Eliminating $\theta^{n}=z^{n}+c^{-1}UV$ from
Proposition~\ref{prop:presentacion} leaves $\K[z,U,V]$. The case $q=1$ is the
same with $z$ and $\theta$ exchanged.
\end{proof}

\noindent For $n=2$ we have $\mathcal H_{1,-1}\cong A(0,1)$, and this recovers
\cite[Example~4.8(1)]{ChanGaddisWonZhang2025}: $Z=\K[z,x^{2},y^{2}]$ with
$z=yx+xy$.

\begin{corollary}\label{cor:regular-downup}
Let $A=A(\alpha,\beta)$ be a noetherian PI graded down-up algebra. Then $Z(A)$ is
regular if and only if $\alpha+\beta=1$, that is, if and only if $1$ is a
characteristic root of $A$.
\end{corollary}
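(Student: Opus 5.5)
The plan is to transport Proposition~\ref{prop:regular} through the dictionary of Proposition~\ref{prop:Hpq-downup}. Given a noetherian PI graded down-up algebra $A=A(\alpha,\beta)$, Proposition~\ref{prop:Hpq-downup} and the discussion before it give $A\cong\mathcal H_{p,q}$. Here $q=\omega_1$ and $p=\omega_2^{-1}$, where $\omega_1,\omega_2$ are the two characteristic roots, which are distinct roots of unity. Regularity of the center is an isomorphism invariant, so $Z(A)$ is regular if and only if $Z(\mathcal H_{p,q})$ is. By Proposition~\ref{prop:regular}, this happens exactly when $p=1$ or $q=1$.

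Next I translate this condition back. We have $q=1$ iff $\omega_1=1$, and $p=1$ iff $\omega_2=1$. So $Z(A)$ is regular iff $1\in\{\omega_1,\omega_2\}$, that is, iff $1$ is a root of $X^{2}-\alpha X-\beta$. Evaluating that polynomial at $X=1$ gives $1-\alpha-\beta$. Hence $1$ is a characteristic root iff $\alpha+\beta=1$.

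One point needs care. The labelling of the roots as $\omega_1,\omega_2$ is arbitrary, but the condition ``$1$ is a root'' is symmetric in the two roots. Remark~\ref{rem:simetria} also confirms that the regularity condition is invariant under $(p,q)\mapsto(q^{-1},p^{-1})$. So no choice of labelling affects the conclusion.

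I do not expect any real obstacle here. The only thing to check is that both directions of Proposition~\ref{prop:Hpq-downup} are used: every noetherian PI $A(\alpha,\beta)$ arises as some $\mathcal H_{p,q}$ with $p,q$ roots of unity and $pq\neq1$. This is needed so that Proposition~\ref{prop:regular} applies.
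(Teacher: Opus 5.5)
Your proposal is correct and follows the paper's proof. You reduce to $\mathcal H_{p,q}$ via Proposition~\ref{prop:Hpq-downup}, apply Proposition~\ref{prop:regular}, and translate ``$p=1$ or $q=1$'' into ``$1$ is a characteristic root'', equivalently $1-\alpha-\beta=0$. The paper does this last step with the factorization $\alpha+\beta-1=-(p^{-1}-1)(q-1)$, which is the same evaluation of $X^{2}-\alpha X-\beta$ at $X=1$.
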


\begin{proof}
By Proposition~\ref{prop:Hpq-downup} we may take $A=\mathcal H_{p,q}$ with
$\alpha=p^{-1}+q$ and $\beta=-p^{-1}q$. Then
\begin{align*}
\alpha+\beta-1=p^{-1}+q-p^{-1}q-1=-(p^{-1}-1)(q-1),
\end{align*}
which vanishes exactly when $p=1$ or $q=1$, and $X^{2}-\alpha X-\beta$ has $1$ as
a root exactly when $\alpha+\beta=1$.
\end{proof}

The algebras with regular center have another name. For $f\in\K[w]$, the
\emph{generalized Heisenberg algebra} $\mathcal H(f)$ of L\"u and Zhao
\cite{LuZhao2015}, in the form used in \cite[Definition~2.6]{LopesSuarezSuarez2025},
is generated by $x,y,w$ with
\begin{align}
wx=xf,\qquad yw=fy,\qquad yx-xy=f-w .
\label{eq:Hf}
\end{align}
It is noetherian if and only if $\deg f=1$ \cite[Theorem~2.7]{LopesSuarezSuarez2025},
and $\mathcal H_{p,q}$ is isomorphic to some $\mathcal H(f)$ if and only if exactly
one of $p,q$ equals $1$ \cite[Theorem~2.9]{LopesSuarezSuarez2025}. Comparing with
Proposition~\ref{prop:regular}, the noetherian PI graded down-up algebras with regular
center are exactly the PI generalized Heisenberg algebras. Our results then
give the following.

\begin{corollary}\label{cor:Hf}
Let $f=\lambda w+\mu$ with $\lambda\in\K^{\times}\setminus\{1\}$ and $\mu\in\K$, and put
$s=w+\mu(\lambda-1)^{-1}\in\mathcal H(f)$.
\begin{enumerate}[\rm (i)]
\item The assignment
\begin{align*}
x\longmapsto(\lambda-1)^{-1}x,\qquad y\longmapsto y,\qquad z\longmapsto s-xy
\end{align*}
defines an isomorphism $\mathcal H_{1,\lambda}\to\mathcal H(f)$.
\item $\mathcal H(f)$ is PI if and only if $\lambda$ is a root of unity. In that
case, if $\lambda$ has order $n\geq2$,
\begin{align*}
Z(\mathcal H(f))=\K\bigl[w-xy,\ x^{n},\ y^{n}\bigr]
\end{align*}
is a polynomial ring in three variables, and $\Oz(\mathcal H(f))\cong\mu_n$ is
generated by conjugation by the normal element $s$.
\item $\mathcal H(f)$ is not Calabi--Yau. Consequently, a noetherian generalized
Heisenberg algebra $\mathcal H(f)$ is Calabi--Yau if and only if $f=w+\mu$ for some
$\mu\in\K$.
\end{enumerate}
\end{corollary}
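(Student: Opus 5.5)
The plan is to reduce everything to $\mathcal H_{1,\lambda}$ through the isomorphism in (i), and then read off (ii) and (iii) from Proposition~\ref{prop:regular} and Theorem~A with $p=1$, $q=\lambda$.

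For (i), I first record two commutation rules in $\mathcal H(f)$. From $wx=x(\lambda w+\mu)$ and the definition of $s$ one gets $sx=\lambda xs$, and symmetrically $ys=\lambda sy$. The third relation in \eqref{eq:Hf} reads $yx-xy=(\lambda-1)w+\mu=(\lambda-1)s$. Write $x'=(\lambda-1)^{-1}x$ and $z'=s-xy$ for the proposed images. The relation $yx-\lambda xy-z$ of \eqref{eq:rel} then maps to $(\lambda-1)^{-1}(yx-\lambda xy)-s+(\lambda-1)^{-1}xy$. This equals $(\lambda-1)^{-1}(yx-xy)-s=0$. For $zx=xz$ (recall $p=1$) I compute
\begin{align*}
z'x=\lambda xs-x(yx)=\lambda xs-x\bigl(xy+(\lambda-1)s\bigr)=xs-x^{2}y=xz',
\end{align*}
and the check of $zy=yz$ is the same computation mirrored. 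So the assignment defines an algebra map. It is surjective, since $x$, $y$ and $w=s-\mu(\lambda-1)^{-1}$ lie in the image. For bijectivity I write down the inverse map $x\mapsto(\lambda-1)x$, $y\mapsto y$, $w\mapsto \theta-\mu(\lambda-1)^{-1}$ and check the relations \eqref{eq:Hf} in $\mathcal H_{1,\lambda}$ the same way, using \eqref{eq:theta}. Alternatively, both algebras have PBW bases with the same Hilbert series for the filtration, which also gives bijectivity.

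For (ii), PI-ness transfers along the isomorphism, and $\mathcal H_{1,\lambda}$ with $1\cdot\lambda\neq1$ is PI exactly when $\lambda$ is a root of unity, by Proposition~\ref{prop:Hpq-downup}. Proposition~\ref{prop:regular} gives $Z(\mathcal H_{1,\lambda})=\K[z,x^{n},y^{n}]$. Under the map of (i), $z$ goes to $s-xy=w-xy+\text{const}$ and $x^{n}$ goes to a scalar multiple of $x^{n}$. Hence the center of $\mathcal H(f)$ is $\K[w-xy,x^{n},y^{n}]$, a polynomial ring in three variables. For the ozone group I use Theorem~A(i) with $m=1$. Then $\ell=n$ and $t=\ord(\lambda)=n$, so $\Oz\cong\mu_n\times\mu_1=\mu_n$. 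Of the generators listed there, conjugation by $x^{t}=x^{n}$ is trivial because $x^n$ is central, and so is conjugation by $z$, which is central because $p=1$. The group is therefore generated by conjugation by $\theta$. Since $\theta=(\lambda-1)xy+z$, its image is $xy+s-xy=s$, so the ozone group is generated by conjugation by $s$.

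For (iii), Theorem~A(v) says $\mathcal H_{p,q}$ is Calabi--Yau exactly when $\omega_1\omega_2=qp^{-1}=1$. Here that product is $\lambda\neq1$, so $\mathcal H(f)$ is not Calabi--Yau. For the consequence, recall that noetherian means $\deg f=1$, so $f=\lambda w+\mu$ with $\lambda\neq0$. The case $\lambda\neq1$ is what we just treated. The remaining case $f=w+\mu$ needs a separate argument, and I expect this to be the main obstacle, because it falls outside the $\mathcal H_{p,q}$ family of \cite[Theorem~2.9]{LopesSuarezSuarez2025}. There $w$ commutes with $x$ and $y$ up to the shift, namely $wx=x(w+\mu)$ and $yw=(w+\mu)y$, and $[y,x]=\mu$. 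I would first try to cite the Calabi--Yau property of $\mathcal H(f)$ for $\deg f=1$, $\lambda=1$ from \cite{LopesSuarezSuarez2025}. Failing that, I would present the algebra as an iterated Ore extension $\K[w][x;\sigma][y;\sigma^{-1},\delta]$ with $\sigma(w)=w+\mu$, and compute that the Nakayama automorphism is trivial. The point is that the twists by $\sigma$ and $\sigma^{-1}$ cancel and the derivation $\delta$ contributes no scalar, using the standard formula for the Nakayama automorphism of Ore extensions.
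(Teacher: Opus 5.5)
\textbf{Gap in (iii).} You deduce that $\mathcal H(f)$ is not Calabi--Yau from Theorem~A(v). Theorem~A is stated only for noetherian \emph{PI} graded down-up algebras, so this covers only the case where $\lambda$ is a root of unity. Part (iii) is claimed for every $\lambda\in\K^{\times}\setminus\{1\}$, and the ``consequently'' clause needs that generality. A noetherian $\mathcal H(f)$ is any $f=\lambda w+\mu$ with $\lambda\neq0$, including $\lambda$ of infinite order, and then $\mathcal H_{1,\lambda}$ is not PI.

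The repair is to use Proposition~\ref{prop:CY} instead of Theorem~A. The formula $\mu_{\mathcal H_{p,q}}(x)=pq^{-1}x$ comes from the Nakayama automorphism of graded down-up algebras \cite[(E1.5.6)]{LuMaoZhang2017}, and that formula needs no root-of-unity hypothesis. For $\mathcal H_{1,\lambda}$ it gives $x\mapsto\lambda^{-1}x\neq x$ whenever $\lambda\neq1$. Since $\mathcal H_{1,\lambda}$ is a connected graded domain, its inner automorphisms are trivial, so it is not Calabi--Yau. This transfers to $\mathcal H(f)$ along your isomorphism (i), which you proved for all $\lambda\neq1$. The paper makes exactly this point in the remark after the corollary.

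\textbf{The rest.} Everything else is the paper's argument:
\begin{itemize}
\item the center comes from Proposition~\ref{prop:regular};
\item the ozone group comes from Corollaries~\ref{ozono2H_pq} and~\ref{cor:generadores-ozono}, with $\eta_z$ and $\eta_{x^{n}}$ trivial and $\theta\mapsto s$;
\item the case $f=w+\mu$ is settled by \cite[Corollary~3.5]{LopesSuarezSuarez2025}.
\end{itemize}
That citation is your first option, so the Ore-extension Nakayama computation is not needed. If you want to avoid the citation, note that $w-xy$ is central in $\mathcal H(w+\mu)$. Hence that algebra is $\K[x,y,w]$ when $\mu=0$, and the Weyl algebra on $x,\mu^{-1}y$ tensored with $\K[w-xy]$ when $\mu\neq0$. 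Both are Calabi--Yau.

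In (i) you differ from the paper only in writing down the inverse $x\mapsto(\lambda-1)x$, $y\mapsto y$, $w\mapsto\theta-\mu(\lambda-1)^{-1}$, where the paper cites \cite[Theorem~2.9]{LopesSuarezSuarez2025} for bijectivity. Your inverse checks out using \eqref{eq:theta} and $\theta=(\lambda-1)xy+z$, and it makes (i) self-contained. There is one slip in your check of the third relation: the image of $-z$ is $-s+xy$, not $-s+(\lambda-1)^{-1}xy$. With $+xy$ the expression does reduce to $(\lambda-1)^{-1}(yx-xy)-s=0$.
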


\begin{proof}
(i) From \eqref{eq:Hf}, $sx=wx+\mu(\lambda-1)^{-1}x=\lambda xw+\mu x+\mu(\lambda-1)^{-1}x=\lambda xs$,
and in the same way $ys=\lambda sy$; moreover $yx-xy=(\lambda-1)s$. So
$\mathcal H(f)$ is generated by $x,y,s$ with $sx=\lambda xs$, $ys=\lambda sy$ and
$yx-xy=(\lambda-1)s$, which is $\mathcal H(\lambda s)$ in the variable $s$. The
isomorphism $\mathcal H_{1,\lambda}\cong\mathcal H(\lambda s)$ is the one written down
in the proof of \cite[Theorem~2.9]{LopesSuarezSuarez2025}; for the reader's
convenience, $s-xy$ commutes with $x$ because
\begin{align*}
(s-xy)x=\lambda xs-x\bigl(xy+(\lambda-1)s\bigr)=x(s-xy),
\end{align*}
symmetrically with $y$, and
$y\cdot(\lambda-1)^{-1}x-\lambda(\lambda-1)^{-1}x\cdot y=s-xy$, so the relations
\eqref{eq:rel} with $p=1$, $q=\lambda$ hold for the images.

(ii) $\mathcal H_{1,\lambda}$ is PI exactly when $\lambda$ is a root of unity
\cite[Proposition~3.11]{Gaddis2016}, since $pq=\lambda\neq1$. For $p=1$ and
$q=\lambda$ of order $n$ we have $m=1$, $\ell=\ord(pq)=n$ and $e=1$.
Proposition~\ref{prop:regular} gives $Z(\mathcal H_{1,\lambda})=\K[z,x^{n},y^{n}]$, whose
image under (i) is $\K[s-xy,x^{n},y^{n}]=\K[w-xy,x^{n},y^{n}]$.
Corollary~\ref{ozono2H_pq} gives $\Oz\cong\mu_n\times\mu_1$, and by
Corollary~\ref{cor:generadores-ozono} it is generated by $\eta_\theta$, because
$\eta_z=\varphi_{1,1}$ and $\eta_{x^{n}}=\varphi_{1,\lambda^{n}}$ are trivial. Finally
$\theta=yx-xy$ is sent to $(\lambda-1)^{-1}(yx-xy)=s$.

(iii) By Corollary~\ref{calabiyau}, $\mathcal H_{p,q}$ is Calabi--Yau only if
$p=q$, and here $p=1\neq\lambda=q$. For the last claim, a noetherian
$\mathcal H(f)$ has $\deg f=1$; if the leading coefficient of $f$ is $1$, then
$\mathcal H(f)$ is Calabi--Yau by \cite[Corollary~3.5]{LopesSuarezSuarez2025}, and
otherwise it is not, by what we just proved.
\end{proof}

\begin{remark}
In (iii) we do not need $\lambda$ to be a root of unity: the Nakayama
automorphism of Proposition~\ref{prop:CY} is computed for all $p,q$. For
$\mu=0$ the isomorphism of (i) is graded once $w$ is given degree $2$, and then
$\mathcal H(\lambda w)$ is the graded down-up algebra $A(1+\lambda,-\lambda)$, with
characteristic roots $1$ and $\lambda$. These are the algebras of
Proposition~\ref{prop:LND} and Corollary~\ref{cor:LND-downup} below; for $\lambda=-1$ one recovers $A(0,1)$.
\end{remark}

\begin{remark}\label{rem:centro-no-fijo}
A regular center might look odd next to the invariant theory of down-up
algebras: a down-up algebra has no quasi-reflections of finite order, so no
fixed ring $A^{H}$ with $H\neq1$ finite is AS regular
\cite[Proposition~6.4]{KirkmanKuzmanovichZhang2008Rigidity}, see also
\cite[Proposition~0.2]{KirkmanKuzmanovichZhang2015DownUp}; Gaddis and Yee prove
the same kind of rigidity for the graded algebras $B_q(t^{d})$ with $q\neq\pm1$
and $d\geq2$ \cite[Theorem~4.9]{GaddisYee2026TrivialOzone}. There is no conflict,
because the center is not the fixed ring of any group of automorphisms of
$\mathcal H_{p,q}$, finite or not. If $Z(\mathcal H_{p,q})=\mathcal H_{p,q}^{H}$,
then $H$ fixes the center pointwise, so $H\subseteq\Oz(\mathcal H_{p,q})$, and
then
\begin{align*}
\mathcal H_{p,q}^{H}\supseteq\mathcal H_{p,q}^{\Oz(\mathcal H_{p,q})}
\supsetneq Z(\mathcal H_{p,q})
\end{align*}
by Corollary~\ref{cor:noskew}. For the same reason the results of
\cite{KirkmanKuzmanovichZhang2015DownUp} on fixed rings say nothing directly
about the center, and in the next subsection we work with the classical
invariants of $\mu_\ell$ acting on $\K[z,\theta]$ instead. Since the center is
commutative, the various notions of complete intersection proposed in
\cite{KirkmanKuzmanovichZhang2015NCI} all agree for it
\cite[Lemma~1.8]{KirkmanKuzmanovichZhang2015NCI}, so the phrase has only one
meaning below.
\end{remark}

\subsection{The Gorenstein property}\label{sec:gorenstein}

The invariant-theoretic half of Proposition~\ref{prop:presentacion} turns the
Gorenstein property of the center into an arithmetic condition on $p$ and $q$.
Write $P=\K[z,\theta]$ and $R=P^{\mu_\ell}$ for the action \eqref{eq:accion},
and put
\begin{align}
d_A=\gcd(A,\ell),\qquad d_B=\gcd(B,\ell),\qquad h=\frac{\ell}{d_Ad_B}.
\label{eq:h}
\end{align}
The action is faithful, so $\gcd(A,B,\ell)=1$ and hence $\gcd(d_A,d_B)=1$; in
particular $d_Ad_B$ divides $\ell$ and $h$ is an integer.

\begin{lemma}\label{lem:gcd-gor}
In the notation of \eqref{eq:h},
\begin{align*}
h=\gcd(m,n),\qquad \frac{m}{h}=d_B,\qquad \frac{n}{h}=d_A .
\end{align*}
In particular $h$ does not depend on the choice of generator $g$ in
\eqref{eq:g}.
\end{lemma}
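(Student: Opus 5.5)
We reduce everything to elementary arithmetic in the cyclic group $\Gamma=\langle g\rangle$ of order $\ell$, using $p=g^{A}$, $q=g^{B}$ from \eqref{eq:g}. The first step is to read off the orders of $p$ and $q$ from the exponents:
\begin{align*}
m=\ord(g^{A})=\frac{\ell}{\gcd(A,\ell)}=\frac{\ell}{d_A},\qquad
n=\ord(g^{B})=\frac{\ell}{\gcd(B,\ell)}=\frac{\ell}{d_B}.
\end{align*}
With these in hand, all three identities become statements about the divisors $d_A,d_B$ of $\ell$.

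Next we use that $\gcd(d_A,d_B)=1$, which was already recorded after \eqref{eq:h} from faithfulness, i.e.\ from $\gcd(A,B,\ell)=1$ in Lemma~\ref{lem:aritmetica}. Since $d_A$ and $d_B$ are coprime divisors of $\ell$, their product $d_Ad_B$ divides $\ell$ and equals $\operatorname{lcm}(d_A,d_B)$. The cleanest route is through the identity
\begin{align*}
\gcd\Bigl(\frac{\ell}{d_A},\frac{\ell}{d_B}\Bigr)=\frac{\ell}{\operatorname{lcm}(d_A,d_B)},
\end{align*}
valid for any divisors of $\ell$. One can check it prime by prime: with $v=v_\rho$, the left side has valuation $\min(v(\ell)-v(d_A),v(\ell)-v(d_B))=v(\ell)-\max(v(d_A),v(d_B))$. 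Combining the two facts gives $\gcd(m,n)=\ell/(d_Ad_B)=h$.

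The remaining identities follow directly: $m/h=(\ell/d_A)\cdot(d_Ad_B/\ell)=d_B$, and symmetrically $n/h=d_A$. Independence of the choice of generator is then immediate, because $h=\gcd(m,n)$ is expressed only through $\ord(p)$ and $\ord(q)$. As a byproduct, $d_A=n/h$ and $d_B=m/h$ are independent of that choice as well.

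There is no real obstacle here. The only point needing care is to invoke coprimality of $d_A$ and $d_B$, so that the lcm equals the product; without it one would only get $\gcd(m,n)=\ell/\operatorname{lcm}(d_A,d_B)$. We would state the valuation identity explicitly so that this step is visible.
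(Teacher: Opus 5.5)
Your proof is correct and follows the paper's argument: both read off $m=\ell/d_A$ and $n=\ell/d_B$ from \eqref{eq:g} and then compute $\gcd(m,n)$ by elementary arithmetic. The only difference is the identity used. The paper uses $\gcd(m,n)=mn/\operatorname{lcm}(m,n)$ with $\operatorname{lcm}(m,n)=\ell$, which gives $\ell/(d_Ad_B)$ directly, so there $\gcd(d_A,d_B)=1$ is a consequence rather than an input. You instead use $\gcd(\ell/d_A,\ell/d_B)=\ell/\operatorname{lcm}(d_A,d_B)$ together with the coprimality of $d_A$ and $d_B$.
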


\begin{proof}
We have $m=\ell/d_A$ and $n=\ell/d_B$, and $\operatorname{lcm}(m,n)=\ell$
because $\Gamma$ is cyclic of order $\ell$. Since $\gcd(d_A,d_B)=1$,
\begin{align*}
\gcd(m,n)=\frac{mn}{\ell}=\frac{\ell}{d_Ad_B}=h,
\end{align*}
and then $m/h=d_B$ and $n/h=d_A$. The first expression involves only $m$ and
$n$.
\end{proof}

\noindent The condition that governs everything in this subsection is
\begin{align}
p^{m/h}=q^{n/h},\qquad h=\gcd(m,n).
\tag{G}
\label{eq:gor}
\end{align}
It is written in terms of $p$, $q$, $m$ and $n$ alone, so no choice is involved
in stating it.

\begin{proposition}\label{prop:gorenstein-crit}
$R$ is Gorenstein if and only if \eqref{eq:gor} holds. In that case
$Z(\mathcal H_{p,q})$ is Gorenstein.
\end{proposition}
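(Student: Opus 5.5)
The plan is to compute the invariant ring $R=P^{\mu_\ell}$ in two stages: first factor out the pseudo-reflections, then apply Watanabe's theorem to what remains.

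\emph{Step 1: the pseudo-reflection subgroup.} By \eqref{eq:accion}, $\zeta\in\mu_\ell$ fixes $\theta$ exactly when $\zeta^{B}=1$, that is, when $\zeta\in\mu_{d_B}$. Such an element acts on $z$ by $\zeta^{A}$. Since $\gcd(A,B,\ell)=1$ gives $\gcd(A,d_B)=1$, every $\zeta\neq1$ in $\mu_{d_B}$ is a genuine pseudo-reflection. Symmetrically, the pseudo-reflections fixing $z$ are the nontrivial elements of $\mu_{d_A}$. A diagonal element of a rank-two action is a pseudo-reflection only if it fixes one of the two coordinate lines, so these are all of them. Since $\gcd(d_A,d_B)=1$, they generate
\[
W=\mu_{d_A}\mu_{d_B}=\mu_{d_Ad_B},\qquad P^{W}=\K[u,v],\quad u=z^{d_B},\ v=\theta^{d_A}.
\]

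\emph{Step 2: the residual action.} The quotient $G/W=\mu_\ell/\mu_{d_Ad_B}\cong\mu_h$ acts on $\K[u,v]$ through the characters $\zeta\mapsto\zeta^{Ad_B}$ on $u$ and $\zeta\mapsto\zeta^{-Bd_A}$ on $v$, and $R=\K[u,v]^{\mu_h}$. I will check that this action is faithful and contains no pseudo-reflections.

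\begin{itemize}
\item Suppose $\zeta^{Ad_B}=1$. Using $\gcd(A,\ell)=d_A$, this gives $\ell/d_A=m\mid d_B\cdot(\text{ord contribution})$, so $\zeta$ has order dividing $d_Ad_B$, i.e.\ $\zeta\in W$. The argument for $v$ is symmetric.
\item Hence no nontrivial element of $G/W$ fixes $u$ or $v$. In particular the action is faithful and has no pseudo-reflections.
\end{itemize}

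This order bookkeeping is the step I expect to be the fiddliest, and I would carry it out explicitly using Lemma~\ref{lem:gcd-gor}.

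\emph{Step 3: Watanabe's criterion.} By Watanabe's theorem, $R$ is Gorenstein if and only if $\mu_h$ acts on $\K[u,v]$ through $SL_2$. This means $\zeta^{Ad_B-Bd_A}=1$ for all $\zeta\in\mu_\ell$, which is equivalent to
\[
g^{Ad_B}=g^{Bd_A}.
\]
By \eqref{eq:g} and Lemma~\ref{lem:gcd-gor} this reads $p^{d_B}=q^{d_A}$, that is, $p^{m/h}=q^{n/h}$, which is exactly \eqref{eq:gor}.

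\emph{Step 4: the center.} For the final assertion, use Proposition~\ref{prop:presentacion}: $Z\cong R[U,V]/(UV-F)$. If $R$ is Gorenstein, then so is the polynomial ring $R[U,V]$. Moreover, $UV-F$ is a nonzero divisor on this domain, so the quotient is Gorenstein.
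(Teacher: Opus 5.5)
Your argument is correct and is the paper's proof: remove the pseudo-reflection subgroup $\mu_{d_A}\mu_{d_B}$ to reach $\K[z^{d_B},\theta^{d_A}]$, apply Watanabe to the residual cyclic group of order $h$, and pass to $Z$ through $R[U,V]/(UV-F)$. The only difference is cosmetic: you read the $\mathrm{SL}_2$ condition directly as $g^{Ad_B}=g^{Bd_A}$, whereas the paper writes it as $h\mid\bar a-\bar b$ with $A=d_A\bar a$, $B=d_B\bar b$. One repair is needed. The garbled line in Step~2 should simply say $\gcd(\ell,Ad_B)=d_Ad_B$, which holds because $\gcd(A/d_A,\ell/d_A)=1$ and $d_B\mid\ell/d_A$. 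This is exactly what rules out pseudo-reflections in the residual action; the paper phrases it as $\gcd(\bar a,h)=1$.
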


\begin{proof}
Write $A=d_A\bar a$ and $B=d_B\bar b$, so that $\gcd(\bar a,\ell/d_A)=1$ and
$\gcd(\bar b,\ell/d_B)=1$. Let $G=\mu_\ell$ and
\begin{align*}
G_z=\{\zeta\in G:\zeta^{A}=1\},\qquad
G_\theta=\{\zeta\in G:\zeta^{B}=1\},
\end{align*}
of orders $d_A$ and $d_B$. Since $\gcd(A,B,\ell)=1$ we have
$G_z\cap G_\theta=1$. By \eqref{eq:accion} an element $\zeta$ acts on
$\K z\oplus\K\theta$ by $\mathrm{diag}(\zeta^{A},\zeta^{-B})$, so it is a
pseudo-reflection exactly when one of the two eigenvalues equals $1$ and the
other does not; that is, the pseudo-reflections are the nonidentity elements of
$G_z\cup G_\theta$. Hence the subgroup $N$ they generate is $G_zG_\theta$, of
order $d_Ad_B$, and $G/N$ is cyclic of order $h$.

The group $G_z$ fixes $z$ and acts on $\theta$ with weight $-B$; as
$\gcd(d_A,B)=\gcd(d_A,d_B)=1$, its invariants are $\K[z,\theta^{d_A}]$, and
symmetrically for $G_\theta$. Therefore
\begin{align*}
P^{N}=\K[X,Y],\qquad X:=z^{d_B},\quad Y:=\theta^{d_A},
\end{align*}
and $R=P^{G}=(P^{N})^{G/N}=\K[X,Y]^{G/N}$.

Put $w=\zeta_\ell^{d_Ad_B}$, of order $h$. The generator $\zeta_\ell$ of $G$
acts on $X$ by $\zeta_\ell^{Ad_B}=w^{\bar a}$ and on $Y$ by
$\zeta_\ell^{-Bd_A}=w^{-\bar b}$, so $G/N$ acts with weights $\bar a$ and
$-\bar b$. Since $h$ divides $\ell/d_A$ we get $\gcd(\bar a,h)=1$, and
symmetrically $\gcd(\bar b,h)=1$; so no nonidentity element of $G/N$ fixes $X$
or $Y$, and $G/N$ contains no pseudo-reflections. The determinant of the action
of $w$ on $\K X\oplus\K Y$ is $w^{\bar a-\bar b}$, whence
\begin{align*}
G/N\subseteq\mathrm{SL}_{2}(\K)\iff h\mid \bar a-\bar b,
\end{align*}
and Watanabe's criterion \cite{Watanabe1974} gives that
$\K[X,Y]^{G/N}$ is Gorenstein precisely in that case.

It remains to identify that congruence with \eqref{eq:gor}. By
Lemma~\ref{lem:gcd-gor}, $p^{m/h}=q^{n/h}$ reads $p^{d_B}=q^{d_A}$, that is
$Ad_B\equiv Bd_A\pmod\ell$, that is $d_Ad_B(\bar a-\bar b)\equiv0\pmod\ell$,
which is $h\mid\bar a-\bar b$.

For the last claim, Proposition~\ref{prop:presentacion} gives
$Z(\mathcal H_{p,q})\cong R[U,V]/(UV-F)$. If $R$ is Gorenstein then so is
$R[U,V]$, and $UV-F$ is a nonzerodivisor, $Z(\mathcal H_{p,q})$ being a domain.
\end{proof}

\begin{remark}\label{rem:danilov}
By the Danilov--Stanley criterion
\cite[Section~6.3]{BrunsHerzog},
$R=\K[L_{\geq0}]$ is Gorenstein if and only if
\begin{align*}
L\cap\Z_{\geq1}^{2}=c+L_{\geq0}
\end{align*}
for some $c\in L\cap\Z_{\geq1}^{2}$.
Thus the Gorenstein condition in \eqref{eq:gor}
can also be characterized in terms of the
interior lattice points of $L_{\geq0}$.
\end{remark}


\begin{example}\label{ex:gor}
If $p=q$, then $m=n=h$, and condition~\eqref{eq:gor}
holds. Hence $R$ is Gorenstein.

If $p=\zeta_5$ and $q=\zeta_5^{2}$, then $m=n=h=5$,
and condition~\eqref{eq:gor} fails. Thus $R$ is not
Gorenstein. In this case,
\begin{align*}
L=\{(i,j)\in\Z^2:i\equiv 2j\pmod 5\},
\end{align*}
and $(2,1)$ and $(1,3)$ are incomparable minimal
elements of $L\cap\Z_{\geq1}^{2}$.

Finally, let $g=\zeta_{30}$, $p=g^3$ and $q=g^2$,
as in Example~\ref{ex:30}. Then $m=10$, $n=15$,
$h=5$, and
\begin{align*}
p^2=g^6=q^3.
\end{align*}
Hence $R$ is Gorenstein, although $p\neq q$.
\end{example}

\begin{proposition}\label{prop:descenso}
The ring $R$ is Gorenstein if and only if
$Z(\mathcal H_{p,q})$ is Gorenstein. Consequently,
$Z(\mathcal H_{p,q})$ is Gorenstein if and only if
condition~\eqref{eq:gor} holds.
\end{proposition}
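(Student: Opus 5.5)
The implication ``$R$ Gorenstein $\Rightarrow$ $Z(\mathcal H_{p,q})$ Gorenstein'' is already the last claim of Proposition~\ref{prop:gorenstein-crit}. So the plan is to prove the converse, and then read off the second sentence from Proposition~\ref{prop:gorenstein-crit}. The idea is to compute the canonical module of $Z:=Z(\mathcal H_{p,q})$ from that of $R$, using the presentation $Z\cong R[U,V]/(UV-F)$ of Proposition~\ref{prop:presentacion}, and then to cut back down to $R$ with a grading.

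First I would set up a $\Z^{2}$-grading on $R[U,V]$. Give $R$ the bidegrees it inherits from $\K[z,\theta]$, which are diagonal, and put $\deg U=(\ell,0)$ and $\deg V=(0,\ell)$. Then $UV-F$ is homogeneous of degree $(\ell,\ell)$, and the isomorphism with $Z$ respects the bigrading \eqref{eq:bigrad}. I would use the coarsening $\kappa(u,v)=u-v$, a $\Z$-grading in which $R$ sits in degree $0$. Since $Z$ is free over $R$ on $\{U^{k}\}_{k\geq0}\cup\{V^{k}\}_{k\geq1}$, we get $Z_{\kappa=k}=R\,U^{k/\ell}$ for $k\geq0$ and $Z_{\kappa=k}=R\,V^{-k/\ell}$ for $k<0$ (and zero when $\ell\nmid k$). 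In particular every nonzero $\kappa$-component of $Z$ is a free $R$-module of rank one, and $Z_{\kappa=0}=R$.

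Next comes the canonical module. $R$ is a normal Cohen--Macaulay graded domain, so it has a graded canonical module $\omega_R$. Then $\omega_{R[U,V]}\cong\omega_R\otimes_R R[U,V]$ up to a degree shift. Because $UV-F$ is a homogeneous nonzerodivisor, $\omega_Z\cong(\omega_R\otimes_R R[U,V])/(UV-F)\cong\omega_R\otimes_R Z$, again up to a shift in $\Z^{2}$. Using the free $R$-basis of $Z$, the $\kappa$-degree-$0$ part of $\omega_R\otimes_R Z$ (after the shift) is a $\kappa$-component of the form $\omega_R\otimes U^{k}$ or $\omega_R\otimes V^{k}$, hence isomorphic to $\omega_R$ as an $R$-module.

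Now suppose $Z$ is Gorenstein. Since $Z$ is a positively graded domain whose degree-zero part in the total grading is $\K$, its canonical module is unique as a graded module up to shift. Therefore $\omega_Z\cong Z(\mathbf s)$ for some $\mathbf s\in\Z^{2}$, as bigraded modules. Taking $\kappa$-components of a fixed degree on both sides gives an $R$-isomorphism between $\omega_R$ and some $\kappa$-component of $Z$. By the first step, that component is $R$ or a free rank-one module $R\,U^{k}$ or $R\,V^{k}$. A nonzero $\kappa$-component must occur on both sides, since a component of $\omega_R\otimes_R Z$ is nonzero exactly when the corresponding component of $Z$ is. Hence $\omega_R\cong R$, and $R$ is Gorenstein. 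Combining this with Proposition~\ref{prop:gorenstein-crit} gives the equivalence with \eqref{eq:gor}.

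I expect the main obstacle to be the bookkeeping in the canonical-module step. Two things need care. First, the base-change formula $\omega_{R[U,V]/(f)}\cong\omega_R\otimes_R R[U,V]/(f)$ must be justified in the bigraded category, not only up to ungraded isomorphism. Second, one must check that a graded isomorphism $\omega_Z\cong Z(\mathbf s)$ really exists, rather than only an ungraded one. For the second point I would first try uniqueness of graded canonical modules over a positively graded ring with $\K$ in degree zero, applied to the total grading and then refined to $\Z^{2}$.
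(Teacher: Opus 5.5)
Your argument is correct, but it takes a different route from the paper's.

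\textbf{The paper's route.} The paper uses no canonical modules and no gradings. It notes that $Z$ is free over $R$ on $\{U^{k}\}_{k\geq0}\cup\{V^{k}\}_{k\geq1}$, hence faithfully flat. So every maximal ideal $\mathfrak m$ of $R$ has a prime $\mathfrak n$ of $Z$ lying over it, and $R_{\mathfrak m}\to Z_{\mathfrak n}$ is a flat local homomorphism. The Gorenstein property descends along such maps \cite[Proposition~3.3.14]{BrunsHerzog}. That takes three lines and uses only freeness.

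\textbf{Your route.} You use the hypersurface presentation and the bigrading to get $\omega_Z\cong(\omega_R\otimes_R Z)(\mathbf s')$. You then isolate $\omega_R$ as a $\kappa$-component of $Z(\mathbf s)$. The details hold up:
\begin{itemize}
\item $\omega_R$ sits in a single $\kappa$-degree, because $R$ lies on the diagonal (or because a rank-one torsion-free module over a domain is indecomposable).
\item A bihomogeneous generator of $\omega_Z$ exists, because $\omega_Z/\mathfrak m\omega_Z$ is one-dimensional and bigraded, and $\omega_Z$ is faithful.
\item $\omega_R\cong R$ makes $R_{\mathfrak m}$ Gorenstein at the irrelevant ideal, hence $R$ Gorenstein.
\end{itemize}

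\textbf{Comparison.} Your route costs more: $\Z^{2}$-graded canonical module theory (Goto--Watanabe) and the two base-change formulas you flagged. What it gives in return is an explicit formula for $\omega_Z$ in terms of $\omega_R$, which the descent argument does not provide.
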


\begin{proof}
Set $Z:=Z(\mathcal H_{p,q})$. By
Proposition~\ref{prop:presentacion},
\begin{align*}
Z\cong R[U,V]/(UV-F).
\end{align*}

Suppose first that $R$ is Gorenstein. Then
$R[U,V]$ is Gorenstein. Since $UV-F$ is a
nonzerodivisor in $R[U,V]$, the quotient $Z$
is Gorenstein.

Conversely, suppose that $Z$ is Gorenstein.
By Proposition~\ref{prop:presentacion}, $Z$
is a free $R$-module with basis
\begin{align*}
\{U^k:k\geq0\}\cup\{V^k:k\geq1\}.
\end{align*}
In particular, the extension $R\subseteq Z$
is faithfully flat.

Let $\mathfrak m$ be a maximal ideal of $R$.
By faithful flatness, there exists a prime
ideal $\mathfrak n$ of $Z$ lying over
$\mathfrak m$. The induced homomorphism
\begin{align*}
R_{\mathfrak m}\longrightarrow Z_{\mathfrak n}
\end{align*}
is flat and local. Since $Z$ is Gorenstein,
so is $Z_{\mathfrak n}$. By Gorenstein
descent along flat local homomorphisms
\cite[Proposition~3.3.14]{BrunsHerzog},
$R_{\mathfrak m}$ is Gorenstein.

As $\mathfrak m$ was arbitrary, $R$ is
Gorenstein.
\end{proof}

\begin{corollary}\label{cor:ci}
$Z(\mathcal H_{p,q})$ is a complete intersection if and only if it is
Gorenstein, that is, if and only if \eqref{eq:gor} holds. It is a hypersurface
if and only if $|\mathcal B|+2-\delta\leq4$.
\end{corollary}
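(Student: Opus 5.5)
The plan is to get both statements of Corollary~\ref{cor:ci} from the presentation $Z\cong R[U,V]/(UV-F)$ of Proposition~\ref{prop:presentacion}, together with the Gorenstein analysis of $R$.

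\emph{Complete intersection $\iff$ Gorenstein.} The forward direction is standard, since complete intersections are Gorenstein. For the converse, suppose $Z$ is Gorenstein.

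\begin{enumerate}[\rm (1)]
\item By Proposition~\ref{prop:descenso}, $R$ is Gorenstein. By the proof of Proposition~\ref{prop:gorenstein-crit}, $R=\K[X,Y]^{G/N}$ with $G/N$ cyclic of order $h$ acting inside $\mathrm{SL}_2(\K)$.
\item Such a ring is the $A_{h-1}$ singularity $\K[X^h,Y^h,XY]$, which is a hypersurface $\K[a,b,c]/(ab-c^h)$. To check this: $G/N$ acts with weights $\bar a,-\bar b$ with $h\mid\bar a-\bar b$ and $\gcd(\bar a,h)=1$, so the weights are $(\bar a,-\bar a)$ and the invariant monomials $X^iY^j$ are exactly those with $i\equiv j\pmod h$.
\item Writing $R=\K[a,b,c]/(ab-c^h)$ gives
\[
Z\cong\K[a,b,c,U,V]/\bigl(ab-c^h,\ UV-\tilde F\bigr),
\]
where $\tilde F$ is any lift of $F$. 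This has five variables and two relations in dimension three, so it is a complete intersection once we know the two relations form a regular sequence.
\item Regularity of the sequence: $ab-c^h$ is prime, and $UV-\tilde F$ is a nonzerodivisor modulo it, because $R[U,V]$ is a domain. Equivalently, the quotient has the expected dimension $3=5-2$.
\item Hence $Z$ is a complete intersection. Combined with Proposition~\ref{prop:descenso}, both properties are equivalent to \eqref{eq:gor}.
\end{enumerate}

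\emph{Hypersurface criterion.} We read off embedding dimension from Corollary~\ref{cor:generadores}: $Z$ is connected graded and needs exactly $N:=|\mathcal B|+2-\delta$ homogeneous generators, so $\dim_\K Z_+/Z_+^2=N$.

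\begin{enumerate}[\rm (1)]
\item A graded quotient of a polynomial ring in $N$ variables by an ideal inside the square of the irrelevant ideal, of dimension three, is a hypersurface if and only if its height is $1$, that is, if and only if $N\le4$. Here minimal generators give a presentation with no linear relations.
\item If $N=3$, then $Z$ is regular (the cases of Proposition~\ref{prop:regular}), hence a polynomial ring, which is still a hypersurface in the degenerate sense. If $N=4$, the kernel is a height-one prime in a UFD, hence principal.
\item Conversely, if $Z$ is a hypersurface, its embedding dimension is at most $\dim Z+1=4$.
\end{enumerate}

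\emph{Main obstacle.} The delicate point is step (2) of the first part: identifying $R$ with the $A_{h-1}$ ring through the weight bookkeeping, then confirming that substituting this presentation into $UV-F$ gives a genuine two-relation presentation. Some care is also needed when $h=1$, where $R$ is a polynomial ring and $Z$ is a hypersurface directly. The minimal-generator count itself is already provided by Corollary~\ref{cor:generadores}.
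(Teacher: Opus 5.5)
Your proposal is correct and follows essentially the same route as the paper. You descend the Gorenstein property to $R$, identify $R$ with $\K[X^{h},Y^{h},XY]\cong\K[a,b,c]/(ab-c^{h})$ using the weights $(\bar a,-\bar a)$, and obtain the two-relation presentation of $Z$ by substituting into $UV-F$; the hypersurface criterion is read off from the generator count of Corollary~\ref{cor:generadores}. The one minor difference is how you justify the regular sequence: you use that $R[U,V]$ is a domain, while the paper uses the height count for homogeneous generators in a polynomial ring. Both are valid.
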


\begin{proof}
A complete intersection is Gorenstein. Conversely, if $Z(\mathcal H_{p,q})$ is
Gorenstein then so is $R$, by Proposition~\ref{prop:descenso}. In the proof of
Proposition~\ref{prop:gorenstein-crit} this gives $h\mid\bar a-\bar b$, and the
group $G/N$ acts on $\K[X,Y]$ with weights $\bar a$ and
$-\bar b\equiv-\bar a$, with $\gcd(\bar a,h)=1$; so it is the
cyclic group of order $h$ acting with weights $1$ and $-1$. Its invariants are
spanned by the $X^{i}Y^{j}$ with $i\equiv j\pmod h$, hence
\begin{align*}
R&=\K[X^{h},Y^{h},XY]\cong\K[u,v,w]/(uv-w^{h}),
\end{align*}
and $R=\K[X,Y]$ when $h=1$. By Proposition~\ref{prop:presentacion},
\begin{align*}
Z(\mathcal H_{p,q})&\cong\K[u,v,w,U,V]/\bigl(uv-w^{h},\ UV-\widetilde F\bigr),
\end{align*}
with $\widetilde F$ any lift of $F$. This quotient has dimension three, so the
ideal has height two; being generated by two homogeneous elements of a
polynomial ring, it is generated by a regular sequence.

For the second claim, if $Z(\mathcal H_{p,q})$ needs at most four homogeneous
generators, the kernel of a minimal presentation is a prime of height at most
one in a polynomial ring, hence principal. Conversely, a hypersurface of
dimension three needs at most four generators, and the number of generators is
$|\mathcal B|+2-\delta$ by Corollary~\ref{cor:generadores}.
\end{proof}

\noindent Examples of each step: $(p,q)=(1,\zeta_3)$ gives a regular center;
$(\zeta_3,\zeta_3)$ a hypersurface which is not regular;
$(\zeta_{30}^{3},\zeta_{30}^{2})$ a complete intersection which is not a
hypersurface; and $(\zeta_5,\zeta_5^{2})$ a Cohen--Macaulay center which is not
Gorenstein.

\begin{corollary}\label{cor:no-gorenstein}
$Z(\mathcal H_{\zeta_5,\zeta_5^{2}})$ is not Gorenstein. In particular, the center
of a noetherian PI down-up algebra need not be Gorenstein, which settles the problem left open in \cite[Section~5.1]{ChanGaddisWonZhang2025}.
\end{corollary}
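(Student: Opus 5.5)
The plan is to apply Proposition~\ref{prop:descenso}, which reduces the statement to checking that condition~\eqref{eq:gor} fails for $p=\zeta_5$, $q=\zeta_5^{2}$. First I record the basic invariants. Here $m=\ord(\zeta_5)=5$ and $n=\ord(\zeta_5^{2})=5$. Hence $h=\gcd(m,n)=5$ and $m/h=n/h=1$. Condition~\eqref{eq:gor} then reads $p=q$, that is $\zeta_5=\zeta_5^{2}$, which is false. I also have to confirm that the standing hypotheses hold: $pq=\zeta_5^{3}\neq1$, and $p,q$ are roots of unity. So Proposition~\ref{prop:descenso} applies and shows that $Z(\mathcal H_{\zeta_5,\zeta_5^{2}})$ is not Gorenstein. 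This is already recorded in Example~\ref{ex:gor}.

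As an independent check, I would use the Danilov--Stanley criterion of Remark~\ref{rem:danilov}. By Example~\ref{ex:gor}, $L=\{(i,j):i\equiv 2j\pmod 5\}$. The points $(2,1)$ and $(1,3)$ both lie in $L\cap\Z_{\geq1}^{2}$. Neither lies in the other plus $L_{\geq0}$: their difference $(1,-2)$ has mixed signs. The lattice points of $L\cap\Z_{\geq1}^{2}$ with smaller coordinates are $(1,1)$, $(1,2)$ and $(2,2)$, and none of these lies in $L$. So there is no single point $c$ with $L\cap\Z_{\geq1}^{2}=c+L_{\geq0}$, and $R$ is not Gorenstein. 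Proposition~\ref{prop:descenso} then transfers this to the center.

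For the ``in particular'' clause, I use Proposition~\ref{prop:Hpq-downup}. It identifies $\mathcal H_{\zeta_5,\zeta_5^{2}}$ with the noetherian PI graded down-up algebra $A(\alpha,\beta)$, where $\alpha=\zeta_5^{-1}+\zeta_5^{2}$ and $\beta=-\zeta_5$. Its characteristic roots are $\zeta_5^{2}$ and $\zeta_5^{-1}$, which are distinct roots of unity. So this is a noetherian PI down-up algebra whose center is not Gorenstein.

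There is no real obstacle. The only point needing care is to read $m$, $n$ and $h$ off correctly and to check that $pq\neq1$.
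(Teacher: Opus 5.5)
Your proposal is correct and follows the paper's own proof: it combines the failure of \eqref{eq:gor} for $p=\zeta_5$, $q=\zeta_5^{2}$ (Example~\ref{ex:gor}) with Proposition~\ref{prop:descenso}, and uses Proposition~\ref{prop:Hpq-downup} for the down-up statement. Your Danilov--Stanley cross-check is an optional extra and reaches the right conclusion, since any admissible $c$ would have to lie componentwise below both $(2,1)$ and $(1,3)$, hence equal $(1,1)$, and $(1,1)\notin L$; the listing of $(2,2)$ there is superfluous, and the phrase ``lattice points of $L\cap\Z_{\geq1}^{2}$'' should read ``of $\Z_{\geq1}^{2}$''.
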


\begin{proof}
Combine Example~\ref{ex:gor} with Proposition~\ref{prop:descenso}; the algebra
$\mathcal H_{\zeta_5,\zeta_5^{2}}$ is a noetherian PI down-up algebra by
Proposition~\ref{prop:Hpq-downup}.
\end{proof}

\noindent This is the smallest possible $\ell$. Let $\ell\leq4$ and $pq\neq1$.
If $\gcd(m,n)=1$, both sides of \eqref{eq:gor} equal $1$. If $m=n\in\{3,4\}$,
then $p\neq q$ would force $q=p^{-1}$, so $p=q$. If $\{m,n\}=\{2,4\}$, the root
of order two is $-1$ and the square of the other is also $-1$.

\begin{remark}
Corollary~\ref{cor:regular-downup} and
Proposition~\ref{prop:descenso}
together answer both halves of
\cite[Question~6.4]{ChanGaddisWonZhang2024OzoneGroupsCenters}: the center is
regular exactly when $\alpha+\beta=1$, and Gorenstein exactly
when $\omega_1^{\,n_1/h}\omega_2^{\,n_2/h}=1$, which is
\eqref{eq:gor} in terms of the characteristic roots. The first also
settles the conjecture stated just before that question, that
$(\alpha,\beta)=(0,1)$ is the only case with $Z(A)$ regular; the answer is
negative, since every root of unity $r\neq1$ gives a noetherian PI
graded down-up algebra $A(1+r,-r)$, with characteristic roots $1$ and $r$,
whose center is regular. Taking $r=\zeta_{3}$ gives
$(\alpha,\beta)=(1+\zeta_{3},-\zeta_{3})$ and
$Z(\mathcal H_{1,\zeta_{3}})=\K[z,x^{3},y^{3}]$, and $(0,1)$ is the
case $r=-1$. By Corollary~\ref{cor:ci} the Gorenstein half of the answer
also settles when the center is a complete intersection, which is the
down-up analogue of
\cite[Question~6.1(1)]{ChanGaddisWonZhang2024OzoneGroupsCenters}.
\end{remark}

\section{The ozone group of \texorpdfstring{$\mathcal H_{p,q}$}{Hpq}}\label{sec:ozono}

We keep the notation of Section~\ref{sec:center}: $p,q$ are roots of unity with
$pq\neq1$, and $m=\ord(p)$, $n=\ord(q)$, $\ell=\mathrm{lcm}(m,n)$,
$t=\ord(pq)$, $e=\ell/t$. By \cite[Corollary~4.7]{ChanGaddisWonZhang2025},
the ozone group of a noetherian PI graded down-up
algebra consists of diagonal automorphisms and is
therefore abelian. We now determine this group
explicitly for $\mathcal H_{p,q}$.

\begin{proposition}\label{diagHpq}
Let $p,q\in\K^{\times}$ be roots of unity with $pq\neq1$. Every
$\varphi\in\Oz(\mathcal H_{p,q})$ is of the form $\varphi_{a,b}$ of
\eqref{eq:phi}, for some $a,b\in\K^{\times}$. In particular
$\Oz(\mathcal H_{p,q})$ is abelian.
\end{proposition}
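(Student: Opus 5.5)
Every ozone automorphism is conjugation by a homogeneous normal element, and homogeneous elements of the $\Z^{2}$-grading act on $x$ and $y$ by scalars; that is the whole plan. Two routes are available. The short one identifies $\mathcal H_{p,q}$ with a noetherian PI graded down-up algebra by Proposition~\ref{prop:Hpq-downup}, notes that $d\mapsto y$, $u\mapsto x$, and applies \cite[Corollary~4.7]{ChanGaddisWonZhang2025}. That result says every ozone automorphism is diagonal on $x$ and $y$, so $\varphi(x)=ax$ and $\varphi(y)=by$. Since $z=yx-qxy$, this forces $\varphi(z)=ab\,z$, hence $\varphi=\varphi_{a,b}$.

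I would also give a self-contained argument through Lemma~\ref{lem:19}, for the following reasons.
\begin{enumerate}[(a)]
\item The algebra $\mathcal H_{p,q}$ is a $\Z^{2}$-graded noetherian domain, module-finite over its center. This is PI together with the fact that Proposition~\ref{prop:presentacion} makes $Z$ affine and $\mathcal H_{p,q}$ finite over $\K[x^\ell,y^\ell,z^m,\theta^n]$.
\item Lemma~\ref{lem:19}(ii) then gives $\varphi=\eta_a$ for a nonzero homogeneous normal element $a$ of some bidegree $(u,v)$; here $\Z^{2}$ is ordered lexicographically.
\item Normality gives $xa=a\,\sigma(x)$ for an automorphism $\sigma$ with $\sigma(x)=\eta_a(x)=a^{-1}xa$.
\item Comparing bidegrees, $a^{-1}xa$ is homogeneous of bidegree $(1,0)$, because $xa$ has bidegree $(u+1,v)$ and we cancel $a$ in the domain.
\item The component $(\mathcal H_{p,q})_{(1,0)}$ is spanned by $x$ alone, by the PBW count in Lemma~\ref{lem:comp}. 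So $\varphi(x)=\alpha x$, and likewise $\varphi(y)=\beta y$ with $\alpha,\beta\in\K^{\times}$.
\end{enumerate}

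The step needing the most care is (d). One must make sure that $a^{-1}xa$, which a priori lives only in the quotient division ring, actually lies in $\mathcal H_{p,q}$ and is homogeneous. Normality of $a$ gives $xa=ax'$ for some $x'\in\mathcal H_{p,q}$. Decomposing $x'$ into bigraded components and using that $\mathcal H_{p,q}$ is a graded domain, only the $(1,0)$ component can survive. The remaining steps are immediate.

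With the diagonal form in hand, $\varphi=\varphi_{\alpha,\beta}$, as $x,y$ generate. Lemma~\ref{lem:phi} shows $(a,b)\mapsto\varphi_{a,b}$ is a group homomorphism from the abelian group $(\K^{\times})^{2}$. Hence $\Oz(\mathcal H_{p,q})$ embeds in an abelian group and is abelian.
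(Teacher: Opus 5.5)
Your short route is exactly the paper's proof: identify $\mathcal H_{p,q}$ with a noetherian PI graded down-up algebra, invoke \cite[Corollary~4.7]{ChanGaddisWonZhang2025} to get $\varphi(x)=ax$ and $\varphi(y)=by$, deduce $\varphi(z)=ab\,z$ from $z=yx-qxy$, and get abelianity from Lemma~\ref{lem:phi}. Your self-contained alternative is also sound: it is the bidegree comparison that opens the paper's proof of Theorem~\ref{thm:normales}, and the paper remarks after Corollary~\ref{cor:generadores-ozono} that this argument proves the proposition without that corollary.
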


\begin{proof}
By Proposition~\ref{prop:Hpq-downup} and
\cite[Corollary~4.7]{ChanGaddisWonZhang2025} there are $a,b\in\K$ with
$\varphi(x)=ax$ and $\varphi(y)=by$, and these are nonzero since $\varphi$ is
an automorphism. Then
\begin{align*}
\varphi(z)=\varphi(y)\varphi(x)-q\varphi(x)\varphi(y)=ab(yx-qxy)=ab\,z,
\end{align*}
so $\varphi=\varphi_{a,b}$. Abelianity follows from Lemma~\ref{lem:phi}, or
from the same corollary.
\end{proof}

\begin{theorem}\label{ozonoH_pq}
With $e=\ell/t$,
\begin{align*}
\Oz(\mathcal H_{p,q})=\bigl\{\varphi_{a,b}:a,b\in\mu_\ell,\;(ab)^{e}=1\bigr\}.
\end{align*}
\end{theorem}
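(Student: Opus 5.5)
By Proposition~\ref{diagHpq}, every ozone automorphism is some $\varphi_{a,b}$. The task is therefore to decide exactly which pairs $(a,b)\in(\K^\times)^2$ make $\varphi_{a,b}$ fix $Z(\mathcal H_{p,q})$ pointwise. The center is $\Z^2$-graded, and by \eqref{eq:peso} the map $\varphi_{a,b}$ acts on the component of bidegree $(u,v)$ by the scalar $a^ub^v$. Hence $\varphi_{a,b}\in\Oz$ if and only if $a^ub^v=1$ for every $(u,v)$ in the support of the center. That support is given explicitly in Corollary~\ref{cor:soporte}, and the rest is arithmetic on it.

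The conditions are computed as follows.

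\begin{itemize}
\item Taking $a=0$ and $\varepsilon=0$ in Corollary~\ref{cor:soporte}, the bidegree $(\ell,0)$ (that is, $x^\ell$) lies in the support, so $a^\ell=1$.
\item Symmetrically, $(0,\ell)$ (that is, $y^\ell$) gives $b^\ell=1$.
\item The diagonal points $(\varepsilon,\varepsilon)$ with $\varepsilon\in E$ give $(ab)^\varepsilon=1$ for all $\varepsilon\in E$.
\end{itemize}

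The third condition is equivalent to $(ab)^{\gcd E}=1$. One direction is clear since $\gcd E$ divides every element of $E$. For the other, $\gcd E$ is an integer combination of elements of $E$; this uses only that $E$ is a subset of $\Z$ and $ab$ is invertible. By Lemma~\ref{lem:aritmetica}, $\gcd E=e$, so the condition becomes $(ab)^e=1$.

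Conversely, suppose $a,b\in\mu_\ell$ and $(ab)^e=1$. A general support point has the form $(\ell a'+\varepsilon,\varepsilon)$ or $(\varepsilon,\ell b'+\varepsilon)$. On such a point the scalar is
\[
a^{\ell a'}(ab)^{\varepsilon}=1 \quad\text{or}\quad b^{\ell b'}(ab)^{\varepsilon}=1,
\]
because $e\mid\varepsilon$. So $\varphi_{a,b}$ is the identity on each bihomogeneous component of the center, hence on the whole center, and $\varphi_{a,b}\in\Oz(\mathcal H_{p,q})$. Together with Proposition~\ref{diagHpq} this gives the displayed equality.

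I do not expect a real obstacle. The one point to be careful about is the reduction from ``fixes $Z$ pointwise'' to a condition on each bihomogeneous component. It holds because $\varphi_{a,b}$ is $\Z^2$-graded and $Z$ is a graded subspace (Proposition~\ref{prop:centro-bigrad}), so fixing $Z$ is equivalent to fixing each component, and there the action is by the single scalar $a^ub^v$. The other small point is that the passage from ``all $\varepsilon\in E$'' to $\gcd E$ needs $\gcd E$ to be realized as an integer combination of elements of $E$, which is automatic.
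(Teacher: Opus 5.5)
Your proof is correct and follows the paper's argument: reduce to diagonal $\varphi_{a,b}$ via Proposition~\ref{diagHpq}, use \eqref{eq:peso} together with the support from Corollary~\ref{cor:soporte} to get $a^{\ell}=b^{\ell}=1$ and $(ab)^{\varepsilon}=1$ for all $\varepsilon\in E$, and conclude with $\gcd E=e$ from Lemma~\ref{lem:aritmetica}. You also spell out the converse check on general support points, which the paper leaves implicit; in your first bullet, rename the index $a$ of Corollary~\ref{cor:soporte} (to $a'$, as you do later) so it does not clash with the parameter $a$ of $\varphi_{a,b}$.
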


\begin{proof}
By Proposition~\ref{diagHpq} every element of $\Oz(\mathcal H_{p,q})$ is of the
form \eqref{eq:phi}, so it remains to decide when $\varphi_{a,b}$ fixes the
center pointwise. The center is $\Z^{2}$-graded, so by \eqref{eq:peso} this
happens if and only if $a^{u}b^{v}=1$ for every $(u,v)$ in its support. Since
$(0,0)\in L_{\geq0}$ we have $0\in E$, and Corollary~\ref{cor:soporte} puts
$(\ell,0)$ and $(0,\ell)$ in the support, giving $a^{\ell}=b^{\ell}=1$. Modulo
these the remaining conditions read $(ab)^{\varepsilon}=1$ for all
$\varepsilon\in E$, that is $(ab)^{\gcd E}=1$; and $\gcd E=e$ by
Lemma~\ref{lem:aritmetica}.
\end{proof}

\begin{corollary} \label{ozono2H_pq}
$$\Oz(\mathcal H_{p,q})\cong\mu_\ell\times\mu_e.$$
\end{corollary}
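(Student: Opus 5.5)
By Theorem~\ref{ozonoH_pq} and Lemma~\ref{lem:phi}, the map $(a,b)\mapsto\varphi_{a,b}$ is an injective group homomorphism. It therefore identifies $\Oz(\mathcal H_{p,q})$ with the subgroup
\begin{align*}
H=\{(a,b)\in\mu_\ell\times\mu_\ell:(ab)^{e}=1\}
\end{align*}
of $(\K^{\times})^{2}$. The task reduces to showing that $H\cong\mu_\ell\times\mu_e$ as abstract groups.

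The plan is to change coordinates on $\mu_\ell\times\mu_\ell$ via the automorphism $(a,b)\mapsto(a,ab)$, whose inverse is $(a,c)\mapsto(a,a^{-1}c)$. It is a bijection of $\mu_\ell\times\mu_\ell$ because $\mu_\ell$ is a group. Under it, $H$ maps onto $\{(a,c)\in\mu_\ell\times\mu_\ell:c^{e}=1\}$. Since $e=\ell/t$ divides $\ell$, we have $\mu_e\subseteq\mu_\ell$, so this image is exactly $\mu_\ell\times\mu_e$.

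Composing gives the explicit isomorphism $\mu_\ell\times\mu_e\to\Oz(\mathcal H_{p,q})$, $(a,c)\mapsto\varphi_{a,a^{-1}c}$. It is a homomorphism because $\varphi_{a,b}\varphi_{a',b'}=\varphi_{aa',bb'}$ and all the groups involved are abelian.

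No step here is a genuine obstacle. The only point to check is that $e\mid\ell$, so that $\mu_e\subseteq\mu_\ell$; this holds because $t=\ord(pq)$ divides $\ell=|\langle p,q\rangle|$.
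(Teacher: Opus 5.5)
Your proposal is correct and follows essentially the paper's own argument. The paper also uses the coordinate change $(a,b)\mapsto(a,ab)$: it defines $\Psi(\varphi_{a,b})=(a,ab)$ from Theorem~\ref{ozonoH_pq} and Lemma~\ref{lem:phi}, and gets surjectivity via $b=a^{-1}\lambda$ using $e\mid\ell$. Your isomorphism $(a,c)\mapsto\varphi_{a,a^{-1}c}$ is just the inverse of that map.
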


\begin{proof}
Define $\Psi(\varphi_{a,b})=(a,ab)$. It lands in $\mu_\ell\times\mu_e$ by
Theorem~\ref{ozonoH_pq}, and it is a homomorphism because
$\varphi_{a,b}\varphi_{c,d}=\varphi_{ac,bd}$ by Lemma~\ref{lem:phi}, so that
\begin{align*}
\Psi(\varphi_{a,b}\varphi_{c,d})=\bigl(ac,(ab)(cd)\bigr)
=\Psi(\varphi_{a,b})\Psi(\varphi_{c,d}).
\end{align*}
It is injective, since $a=c$ and $ab=cd$ force $b=d$. For surjectivity, given
$(a,\lambda)\in\mu_\ell\times\mu_e$ put $b:=a^{-1}\lambda$; as $e\mid\ell$ we
have $\lambda\in\mu_\ell$, hence $b\in\mu_\ell$, and $\Psi(\varphi_{a,b})=(a,\lambda)$.
\end{proof}

\begin{corollary}
If \(\gcd(m,n)=1\), then
$\Oz(\mathcal H_{p,q})\cong\mu_{mn}$.
\end{corollary}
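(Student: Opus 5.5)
The plan is to deduce this directly from Corollary~\ref{ozono2H_pq}, which gives $\Oz(\mathcal H_{p,q})\cong\mu_\ell\times\mu_e$ with $e=\ell/t$. So it suffices to show that $\gcd(m,n)=1$ forces $\ell=mn$ and $t=\ell$, that is $e=1$; then $\mu_\ell\times\mu_1\cong\mu_{mn}$.

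For $\ell$, coprimality gives $\ell=\operatorname{lcm}(m,n)=mn$ immediately. For $t=\ord(pq)$ I would argue as in the proof of Proposition~\ref{prop:regular}, where the same fact is used. Suppose $(pq)^{k}=1$. Then $p^{k}=q^{-k}$ lies in $\langle p\rangle\cap\langle q\rangle$. This intersection is a subgroup of a cyclic group whose order divides both $m$ and $n$, hence it is trivial. Therefore $p^{k}=1$ and $q^{k}=1$, so $m\mid k$ and $n\mid k$, and hence $mn\mid k$. Conversely $(pq)^{mn}=1$, so $t=mn=\ell$ and $e=1$.

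An alternative is to invoke Lemma~\ref{lem:aritmetica}. There $\langle p\rangle\cap\langle q\rangle=1$, so one may take $(r,s)=(0,0)$, and then $e=\gcd(m,n,0)=\gcd(m,n)=1$. Either route finishes the proof. Explicitly, Theorem~\ref{ozonoH_pq} then reads $\Oz=\{\varphi_{a,a^{-1}}:a\in\mu_{mn}\}$, and the isomorphism is $\varphi_{a,a^{-1}}\mapsto a$.

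There is no real obstacle. The only point needing care is $e=1$, which rests on the elementary fact that the intersection of cyclic subgroups of coprime orders is trivial.
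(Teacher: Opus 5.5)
Your proof is correct and is essentially the paper's argument: both reduce to Corollary~\ref{ozono2H_pq} by showing that coprimality forces $t=\ord(pq)=mn=\ell$, hence $e=1$. You also justify $\ord(pq)=mn$ via the trivial intersection $\langle p\rangle\cap\langle q\rangle$, a step the paper states without detail.
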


\begin{proof}
Since $\gcd(m,n)=1$, the element $pq$ has order $\mathrm{lcm}(m,n)=mn=\ell$,
so $t=\ell$ and $e=1$. By Corollary~\ref{ozono2H_pq},
$\Oz(\mathcal H_{p,q})\cong\mu_{mn}\times\mu_{1}\cong\mu_{mn}$.
\end{proof}

\begin{example}\label{ex:30}
Let $\K=\C$ and let $g=\zeta_{30}=e^{2\pi i/30}$. Take $p=g^{3}$ and $q=g^{2}$,
so $\ord(p)=10$, $\ord(q)=15$ and $\ell=30$; also $pq=g^{5}\neq1$ and
$g=q^{2}p^{-1}$, so $\Gamma=\langle p,q\rangle=\langle g\rangle$. Since
$\ord(pq)=\ord(g^{5})=6$, we get $t=6$ and $e=\ell/t=5$, whence
$$\Oz(\mathcal H_{p,q})\cong\mu_{30}\times\mu_{5}\cong\Z_{30}\times\Z_{5}.$$
For comparison with \cite{BeraMandalNandy2024}: $p^{2}=g^{6}=q^{3}$ has order
$30/\gcd(30,6)=5=\gcd(m,n)$, so one may take $(r,s)=(2,3)$ and
$\Omega=z^{2}\theta^{3}$; indeed $\gcd(10,15,5)=5=e$, as
Lemma~\ref{lem:aritmetica} predicts.
\end{example}

The following result determines the ozone group of the Kirkman--Small quantum Heisenberg algebra $\mathcal H_q$.

\begin{theorem}\label{ozonoh_q}
Let \(q\) be a primitive \(m\)-th root of unity, with \(m\geq3\). Then
$$
\Oz(\mathcal H_q)
\cong
\mu_m\times\mu_{\gcd(m,2)}\cong
\begin{cases}
\mu_m, & m\text{ odd},\\
\mu_m\times\mu_2, & m\text{ even}.
\end{cases}
$$
\end{theorem}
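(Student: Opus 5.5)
The plan is to reduce to Corollary~\ref{ozono2H_pq} via the identification $\mathcal H_q\cong\mathcal H_{q,q}$ recorded in Corollary~\ref{cor:centro-Hq}. Since an isomorphism of algebras carries the center onto the center, it conjugates the ozone groups onto each other. Hence $\Oz(\mathcal H_q)\cong\Oz(\mathcal H_{q,q})$, and it suffices to compute $\ell$ and $e=\ell/t$ for $p=q$ of order $m$.

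First I check the hypotheses. With $p=q$ a primitive $m$-th root of unity and $m\geq3$, we have $pq=q^2\neq1$, because $q^2=1$ would force $m\leq2$. So the standing assumption $pq\neq1$ holds, and Corollary~\ref{ozono2H_pq} applies, giving $\Oz(\mathcal H_{q,q})\cong\mu_\ell\times\mu_e$.

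Next I compute the invariants. Here $m=n$, so $\ell=\operatorname{lcm}(m,m)=m$. Also $t=\ord(q^2)=m/\gcd(m,2)$, so $e=\ell/t=\gcd(m,2)$, as already noted in the proof of Corollary~\ref{cor:centro-Hq}. Substituting gives $\Oz(\mathcal H_q)\cong\mu_m\times\mu_{\gcd(m,2)}$. This is $\mu_m$ when $m$ is odd and $\mu_m\times\mu_2$ when $m$ is even. One could add that $\mu_m\times\mu_2$ is then not cyclic, but the statement only asks for the displayed form.

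The only point needing care is the identification of $\mathcal H_q$ with $\mathcal H_{q,q}$ by exchanging $x$ and $y$. I would verify it directly on the relations. The relations of $\mathcal H_q$ are $zx=qxz$, $yz=qzy$, $xy-qyx=z$. Swapping $x$ and $y$ turns them into $zy=qyz$, $xz=qzx$, $yx-qxy=z$. These are exactly \eqref{eq:rel} with $p=q$, since $zx=q^{-1}xz$ is equivalent to $xz=qzx$. Everything after that is a direct substitution, so I expect no real obstacle.
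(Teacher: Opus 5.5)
Your proposal is correct and matches the paper's proof essentially step for step: identify $\mathcal H_q$ with $\mathcal H_{q,q}$ by swapping $x$ and $y$, use $m\geq3$ to get $pq=q^{2}\neq1$, compute $\ell=m$, $t=m/\gcd(m,2)$ and $e=\gcd(m,2)$, and apply Corollary~\ref{ozono2H_pq}. Your explicit check of the relations under the swap, and your remark that an isomorphism carries ozone groups to ozone groups, spell out details the paper leaves implicit.
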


\begin{proof}
Taking \(p=q\), we have \(\mathcal H_{p,q}\cong\mathcal H_q\) by exchanging $x$
and $y$. Since \(m\geq3\),
\(q^2\neq1\) and therefore $pq=q^2\neq1$. Moreover
$\ord(p)=\ord(q)=m$ and $\ell=m$. Since $t=\ord(q^{2})=m/\gcd(m,2)$, we get
$e=\ell/t=\gcd(m,2)$, and Corollary~\ref{ozono2H_pq} gives
$\Oz(\mathcal H_q)\cong\mu_m\times\mu_{\gcd(m,2)}$. Finally, $\gcd(m,2)$ is $1$
for $m$ odd and $2$ for $m$ even.
\end{proof}

\noindent For $m=4$ we have $\mathcal H_{i}\cong A(0,-1)$, and Theorem~\ref{ozonoh_q} and
Corollary~\ref{cor:centro-Hq} recover \cite[Example~4.8(2)]{ChanGaddisWonZhang2025},
where the generators $\eta_{x^{2}}$ and $\eta_{\Omega_1}$ also appear.

\begin{corollary}
Let \(p\) be a primitive \(m\)-th root of unity and let
\(q=p^k\), with \(m\nmid(k+1)\). Then
$$
\Oz(\mathcal H_{p,p^k})
\cong
\mu_m\times\mu_{\gcd(m,k+1)}.
$$
\end{corollary}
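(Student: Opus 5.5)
This is a direct specialization of Corollary~\ref{ozono2H_pq}, so the plan is to compute the three invariants $m,n,\ell$ and $t$ for $q=p^{k}$ and read off $e=\ell/t$.

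First we check the hypotheses. Since $p$ is a primitive $m$-th root of unity, $q=p^{k}$ is a root of unity. Its order is $n=m/\gcd(m,k)$, which divides $m$, so $\ell=\operatorname{lcm}(m,n)=m$. Equivalently, $\Gamma=\langle p,q\rangle=\langle p\rangle$ has order $m$. Next, $pq=p^{k+1}$. The hypothesis $m\nmid(k+1)$ says exactly that $pq\neq1$, so we are in the setting of Section~\ref{sec:ozono}.

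Now we compute $t=\ord(pq)=\ord(p^{k+1})=m/\gcd(m,k+1)$. Hence
\[
e=\ell/t=\gcd(m,k+1),
\]
and Corollary~\ref{ozono2H_pq} gives $\Oz(\mathcal H_{p,p^{k}})\cong\mu_m\times\mu_{\gcd(m,k+1)}$.

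As a cross-check, Lemma~\ref{lem:aritmetica} with $g=p$, $A=1$, $B=k$ gives $e=\gcd(\ell,A+B)=\gcd(m,k+1)$, the same value. There is no real obstacle here. The only points needing care are that $\ell=m$ even when $\gcd(m,k)>1$, which holds because $\langle q\rangle\subseteq\langle p\rangle$, and that the case $k=1$ recovers Theorem~\ref{ozonoh_q}.
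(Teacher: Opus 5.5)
Your proposal is correct and follows the paper's proof essentially verbatim: compute $n=m/\gcd(m,k)$ so that $\ell=m$, note that $m\nmid(k+1)$ means $pq=p^{k+1}\neq1$, get $t=m/\gcd(m,k+1)$ and $e=\gcd(m,k+1)$, then apply Corollary~\ref{ozono2H_pq}. Your cross-check via Lemma~\ref{lem:aritmetica} with $g=p$, $A=1$, $B=k$ is a harmless extra that confirms the same value.
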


\begin{proof}Since $n=\ord(q)=m/\gcd(m,k)$ divides $m$, we have $\ell=m$, and
$m\nmid(k+1)$ gives $pq=p^{k+1}\neq1$. Hence $t=\ord(p^{k+1})=m/\gcd(m,k+1)$ and
$e=\ell/t=\gcd(m,k+1)$; apply Corollary~\ref{ozono2H_pq}.
\end{proof}

\subsection{The order of the ozone group and the fixed ring}

\begin{proposition}\label{ordenozono}
$$\left|\Oz(\mathcal H_{p,q})\right|=\frac{\ell^{2}}{\ord(pq)}.$$
In particular $\Oz(\mathcal H_{p,q})\neq\{\mathrm{id}\}$, since
$|\Oz(\mathcal H_{p,q})|\geq\ell>1$: indeed $\ord(pq)$ divides $|\Gamma|=\ell$, and
$\ell=1$ would force $p=q=1$, hence $pq=1$.
\end{proposition}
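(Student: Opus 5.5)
The order formula follows at once from Corollary~\ref{ozono2H_pq}. That corollary gives $\Oz(\mathcal H_{p,q})\cong\mu_\ell\times\mu_e$ with $e=\ell/t$ and $t=\ord(pq)$. Since $\K$ is algebraically closed of characteristic zero, $|\mu_k|=k$ for every $k\geq1$. Hence
\begin{align*}
|\Oz(\mathcal H_{p,q})|=\ell\cdot e=\frac{\ell^{2}}{t}=\frac{\ell^{2}}{\ord(pq)} .
\end{align*}
Alternatively, Theorem~\ref{ozonoH_pq} gives the count directly. There are $\ell$ choices of $a\in\mu_\ell$. For each $a$, the condition $(ab)^{e}=1$ with $b\in\mu_\ell$ means that $ab$ ranges over $\mu_e\subseteq\mu_\ell$, which gives $e$ choices of $b$. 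The map $(a,b)\mapsto\varphi_{a,b}$ is injective by Lemma~\ref{lem:phi}, so distinct pairs give distinct automorphisms.

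For the lower bound, recall from Lemma~\ref{lem:aritmetica} that $pq\in\Gamma=\langle p,q\rangle$, which is cyclic of order $\ell$. So $t=\ord(pq)$ divides $\ell$, and $e=\ell/t\geq1$ is an integer. Therefore $|\Oz(\mathcal H_{p,q})|=\ell e\geq\ell$.

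It remains to show $\ell>1$. If $\ell=1$, then $\Gamma$ is trivial, so $p=q=1$ and $pq=1$, contradicting the standing hypothesis $pq\neq1$. Hence $|\Oz(\mathcal H_{p,q})|\geq\ell\geq2$, and the group is nontrivial.

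No step here is a genuine obstacle. The only point needing care is that $\ell=\mathrm{lcm}(m,n)$ really is $|\Gamma|$ and that $t\mid\ell$, and both were already established in Lemma~\ref{lem:aritmetica}.
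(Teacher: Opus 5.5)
Your proposal is correct and follows the paper's proof: the order is read off as $\ell e=\ell^{2}/t$ from $\Oz(\mathcal H_{p,q})\cong\mu_\ell\times\mu_e$ in Corollary~\ref{ozono2H_pq}, and the nontriviality uses $t\mid\ell=|\Gamma|$ and $\ell>1$ (since $\ell=1$ forces $pq=1$), exactly as in the statement. Your alternative direct count from Theorem~\ref{ozonoH_pq} is also valid, but it is not needed.
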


\begin{proof}
By Corollary~\ref{ozono2H_pq} we have $|\Oz(\mathcal H_{p,q})|=|\mu_\ell\times\mu_e|=\ell e$,
and $e=\ell/t$ by definition. Readers comparing with
\cite{BeraMandalNandy2024}, where this invariant appears as $\gcd(m,n,r+s)$,
should use the second half of Lemma~\ref{lem:aritmetica}.
\end{proof}

\begin{proposition}\label{prop:fijo}
Let $S=\bigoplus_{e\mid k}\K[z,\theta]_{k}$ be the $e$-th Veronese subalgebra of
$\K[z,\theta]$. Then
\begin{align*}
\mathcal H_{p,q}^{\Oz(\mathcal H_{p,q})}
=\bigoplus_{a\geq0}x^{\ell a}S\ \oplus\ \bigoplus_{b\geq1}y^{\ell b}S
\ \cong\ S[U,V]/(UV-F),
\end{align*}
so $\mathcal H_{p,q}^{\Oz(\mathcal H_{p,q})}$ is commutative, contains $Z(\mathcal H_{p,q})=R[U,V]/(UV-F)$, and
$\rk_{Z}\mathcal H_{p,q}^{\Oz(\mathcal H_{p,q})}=t$.
\end{proposition}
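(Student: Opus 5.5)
The plan is to compute the fixed ring bidegree by bidegree, using the weight formula \eqref{eq:peso} and the explicit group from Theorem~\ref{ozonoH_pq}. Every element of $\Oz(\mathcal H_{p,q})$ is some $\varphi_{a,b}$, and $\varphi_{a,b}$ acts on $(\mathcal H_{p,q})_{(u,v)}$ by the scalar $a^{u}b^{v}$. So the fixed ring is the direct sum of those bigraded components $(u,v)$ with $a^{u}b^{v}=1$ for all $a,b\in\mu_\ell$ with $(ab)^{e}=1$.

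The first step is to determine this set of bidegrees. Put $\lambda=ab\in\mu_e$, so $a^{u}b^{v}=a^{u-v}\lambda^{v}$, with $a\in\mu_\ell$ and $\lambda\in\mu_e$ independent by Corollary~\ref{ozono2H_pq}. The condition therefore becomes $\ell\mid u-v$ and $e\mid v$, which is the same as $e\mid u$ and $e\mid v$ together with $\ell\mid u-v$.

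The second step describes these components using the bases of Lemma~\ref{lem:comp}. For $u\geq v$ the component $(\mathcal H_{p,q})_{(u,v)}$ is $x^{u-v}\K[z,\theta]_{v}$. With $u-v=\ell a$ and $e\mid v$ this is $x^{\ell a}S_{v}$, and symmetrically for $v>u$ one gets $S_{u}y^{\ell b}$. Since $y^{\ell}$ commutes with $z,\theta$ up to $p^{\pm\ell}q^{\mp\ell}=1$, we may write this as $y^{\ell b}S$. Adding up over all such bidegrees gives the displayed direct sum.

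For commutativity, $x^{\ell}$ and $y^{\ell}$ are central by Proposition~\ref{prop:centro-bigrad}, and $S\subseteq\K[z,\theta]$ is commutative, so the fixed ring is commutative. The isomorphism with $S[U,V]/(UV-F)$ follows exactly as in Proposition~\ref{prop:presentacion}. First, $F\in S$ because $e\mid\ell$ and $F$ is homogeneous of degree $\ell$. Next, the map $U\mapsto x^{\ell}$, $V\mapsto y^{\ell}$ is surjective onto the displayed sum and kills $UV-F$ by Lemma~\ref{lem:factor}. Finally, $S[U,V]/(UV-F)$ is free over $S$ on $\{U^{k}\}_{k\geq0}\cup\{V^{k}\}_{k\geq1}$, and this basis matches the decomposition, which gives injectivity. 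The inclusion $R\subseteq S$ holds because $\gcd E=e$, so every $(i,j)\in L_{\geq0}$ has $e\mid i+j$. Hence $Z\subseteq$ the fixed ring.

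The rank computation is the delicate step. Both rings are free over $R$, respectively $S$, on the same basis $\{U^{k}\}\cup\{V^{k}\}$. It should then suffice to compute $\rk_{R}S$, since after tensoring with $\mathrm{Frac}(Z)$ the rank is that of $S$ over $R$. I would argue as follows.
\begin{itemize}
\item[] $S=\K[z,\theta]^{\mu_e}$, where $\mu_e$ acts by scalars.
\item[] $R=\K[z,\theta]^{\mu_\ell}$ for the faithful action \eqref{eq:accion}.
\item[] Consequently $\rk_{R}\K[z,\theta]=\ell$ and $\rk_{S}\K[z,\theta]=e$.
\item[] Therefore $\rk_{R}S=\ell/e=t$.
\end{itemize}
This uses Galois theory of the fraction fields. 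The step to check is that the scalar $\mu_e$ is a subgroup of the image of the $\mu_\ell$ action, i.e. that $\mathrm{Frac}(S)\supseteq\mathrm{Frac}(R)$ corresponds to this subgroup. This holds because $S\supseteq R$ are both invariant rings of subgroups of a common torus acting on $\K(z,\theta)$.

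Alternatively, one can compare the $\Z^{2}$-graded lattices directly. The exponent lattice of $S$ is $\{(i,j):e\mid i+j\}$, of index $e$ in $\Z^{2}$, while $L$ has index $\ell$. Their ratio $\ell/e=t$ gives the degree of the extension of group algebras, and hence of the fraction fields.
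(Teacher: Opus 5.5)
Your proposal is correct and follows the paper's proof step for step. It uses the same substitution $\alpha=a$, $\lambda=ab$ to get $\ell\mid u-v$ and $e\mid v$, Lemma~\ref{lem:comp} for the components, the argument of Proposition~\ref{prop:presentacion} for the presentation, and $\rk_Z=\rk_RS=\ell/e=t$ via the invariant theory of $\K[z,\theta]$. The paper gets $R\subseteq S$ and the degree $t$ in one stroke by noting that the order-$e$ subgroup $\{\zeta^{tk}\}$ of the action \eqref{eq:accion} acts by the scalar $\zeta^{tkA}$. Your tower-law computation needs only $R\subseteq S$, which you already have from $\gcd E=e$, so the subgroup identification you flag as the step to check is not actually required.
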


\begin{proof}
By \eqref{eq:peso}, $\varphi_{a,b}$ acts on $(\mathcal H_{p,q})_{(u,v)}$ by
$a^{u}b^{v}$. Writing $\alpha=a$ and $\lambda=ab$, which by
Corollary~\ref{ozono2H_pq} range independently over $\mu_\ell$ and $\mu_e$, this
scalar is $\alpha^{u-v}\lambda^{v}$; it equals $1$ for all $\alpha,\lambda$
exactly when $\ell\mid u-v$ and $e\mid v$. Lemma~\ref{lem:comp} now gives the
displayed basis.

By Lemma~\ref{lem:aritmetica}, $S$ is the ring of invariants of $\K[z,\theta]$
under the subgroup of order $e$ of the $\mu_\ell$-action \eqref{eq:accion}: that
subgroup consists of the $\zeta^{tk}$, which act diagonally by $\zeta^{tkA}$,
and $\gcd(e,A)=1$ makes $\zeta^{tA}$ of order $e$. In particular $R\subseteq S$
and $[\mathrm{Frac}(S):\mathrm{Frac}(R)]=\ell/e=t$. Since $x^{\ell}$ and
$y^{\ell}$ are central and $x^{\ell}y^{\ell}=F$, the argument of
Proposition~\ref{prop:presentacion} applies verbatim and gives
$\mathcal H_{p,q}^{\Oz(\mathcal H_{p,q})}\cong S[U,V]/(UV-F)=S\otimes_{R}Z(\mathcal H_{p,q})$. Hence
$\rk_{Z}\mathcal H_{p,q}^{\Oz(\mathcal H_{p,q})}=\rk_{R}S=t$.
\end{proof}

\begin{corollary}\label{cor:noskew}
$\mathcal H_{p,q}$ is not isomorphic, as a graded algebra generated in degree one, to any PI skew polynomial ring, and
$\mathcal H_{p,q}^{\Oz(\mathcal H_{p,q})}\neq Z(\mathcal H_{p,q})$. More precisely, $\mathcal H_{p,q}$ has rank $\ell^{2}/t$
over $\mathcal H_{p,q}^{\Oz(\mathcal H_{p,q})}$ and $\mathcal H_{p,q}^{\Oz(\mathcal H_{p,q})}$ has rank $t$ over $Z(\mathcal H_{p,q})$.
\end{corollary}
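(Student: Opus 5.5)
The ranks come first. By Proposition~\ref{prop:posner} and $\PIdeg\mathcal H_{p,q}=\ell$ we have $\rk_Z\mathcal H_{p,q}=\ell^{2}$. Proposition~\ref{prop:fijo} gives $\rk_Z\mathcal H_{p,q}^{\Oz}=t$. Ranks are multiplicative in the tower $Z\subseteq\mathcal H_{p,q}^{\Oz}\subseteq\mathcal H_{p,q}$: tensor with $K=\mathrm{Frac}(Z)$, and note that $\mathcal H_{p,q}^{\Oz}\otimes_ZK$ is a field, being a commutative domain finite over $K$. This gives $\rk_{\mathcal H^{\Oz}}\mathcal H_{p,q}=\ell^{2}/t$. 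Here the rank of $\mathcal H_{p,q}$ over the fixed ring means the dimension over that field.

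Next, the strict inequality $\mathcal H_{p,q}^{\Oz}\neq Z$. The fixed ring contains the $e$-th Veronese $S$, while $Z\cap\K[z,\theta]=R=S^{\mu_t}$. Since $t\geq2$ (because $pq\neq1$), the extension $R\subsetneq S$ is proper. A concrete witness is $\theta^{e}$: it lies in $S$, and it lies in $R$ only if $(0,e)\in L$, i.e.\ $q^{e}=1$. But $n\mid e$ together with $\mathrm{lcm}(n,t)=\ell$ from Lemma~\ref{lem:aritmetica} would force $e=\ell$, i.e.\ $t=1$, contradicting $t\geq2$. Equivalently, the rank $t>1$ already shows the inclusion is proper. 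In the same way, $\mathcal H_{p,q}^{\Oz}\neq\mathcal H_{p,q}$ because $\ell^{2}/t\geq\ell>1$.

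For the non-skew-polynomial claim, I would invoke \cite{ChanGaddisWonZhang2025}: among noetherian PI AS regular algebras generated in degree one, the skew polynomial rings are exactly those whose ozone group is abelian of order $\rk_ZA$. Our ozone group is abelian, so it suffices to check that its order differs from the rank. By Proposition~\ref{ordenozono} the order is $\ell^{2}/t$, and the rank is $\ell^{2}$; these differ since $t>1$.

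An alternative route avoids the cited characterization. For a PI skew polynomial ring, the ozone fixed ring equals the center \cite{ChanGaddisWonZhang2024OzoneGroupsCenters}, and we have just shown this fails here. A graded isomorphism transports both the ozone group and the center, so either invariant distinguishes $\mathcal H_{p,q}$.

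The main obstacle is making sure the cited characterization applies as stated, with hypotheses of generation in degree one and noetherian PI AS regular. $\mathcal H_{p,q}$ is generated by $x,y$ in degree one as the down-up algebra and is AS regular of dimension three, so the hypotheses hold. The remaining steps are routine bookkeeping with the results above.
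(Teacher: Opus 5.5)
Your argument is essentially the paper's. The ranks come from Proposition~\ref{prop:posner}, Proposition~\ref{prop:fijo} and multiplicativity over $\mathrm{Frac}(Z)$. The non-skew claim comes from the Chan--Gaddis--Won--Zhang characterization applied to $|\Oz|=\ell^{2}/t\neq\ell^{2}=\rk_Z\mathcal H_{p,q}$. The one difference is in how you get $\mathcal H_{p,q}^{\Oz}\neq Z$. The paper reads it off the third equivalent condition ($A^{\Oz}=Z$) in that characterization, whereas you get it directly from $\rk_Z\mathcal H_{p,q}^{\Oz}=t>1$. That is fine and more self-contained. Your alternative route likewise needs only the easy direction ``skew polynomial $\Rightarrow A^{\Oz}=Z$'' plus invariance under isomorphism.

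One step is wrong, though: the concrete witness $\theta^{e}$. By Lemma~\ref{lem:aritmetica}, $e$ is the order of $\bar q$ in $\Gamma/\langle pq\rangle$, so $e\mid n$ always, and $q^{e}=1$ exactly when $n=e$. This happens, for instance, for $q=1$ and $p$ of order $m\geq2$. There $e=1$, $t=\ell=m$, and $\theta^{e}=\theta$ is central (Proposition~\ref{prop:regular}). Your deduction ``$n\mid e$ and $\operatorname{lcm}(n,t)=\ell$ force $e=\ell$'' is false: these only give $\ell\mid\operatorname{lcm}(e,t)\mid et=\ell$, that is, $\gcd(e,t)=1$. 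A correct witness is one of $z^{e}$, $\theta^{e}$, since both lying in $R$ would need $m\mid e$ and $n\mid e$, hence $\ell\mid e$ and $t=1$. Alternatively use one of $z^{e}$, $z^{e-1}\theta$, whose $\Gamma$-degrees differ by $pq\neq1$, as in Proposition~\ref{prop:Ct}. Since your rank argument already covers this step, the proof stands once the witness is dropped or corrected.
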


\begin{proof}
By Proposition~\ref{prop:posner} and $\PIdeg(\mathcal H_{p,q})=\ell$ we get
$\rk_{Z}\mathcal H_{p,q}=\ell^{2}$, whereas $|\Oz(\mathcal H_{p,q})|=\ell^{2}/t$ by
Proposition~\ref{ordenozono}. Since $t>1$, the two differ, and by
\cite[Corollary~3.5(3)]{LiuWuZhu2026OzoneGroups} --- a restatement of
\cite[Theorem~F]{ChanGaddisWonZhang2025} --- an algebra generated in degree one
is a skew polynomial ring if and only if $|\Oz|=\rk_Z$, if and only if
$A^{\Oz}=Z$. The hypotheses are met: $\mathcal H_{p,q}$ is noetherian PI AS regular, it is
generated in degree one by $x$ and $y$ since $z=yx-qxy$, and $\Oz(\mathcal H_{p,q})$ is
abelian by Proposition~\ref{diagHpq}; also $t>1$ because $pq\neq1$. For the ranks, Proposition~\ref{prop:fijo} gives
$\rk_{Z}\mathcal H_{p,q}^{\Oz(\mathcal H_{p,q})}=t$, and multiplicativity of ranks over
$\mathrm{Frac}(Z)$ then gives $\rk_{\mathcal H_{p,q}^{\Oz(\mathcal H_{p,q})}}\mathcal H_{p,q}=\ell^{2}/t
=|\Oz(\mathcal H_{p,q})|$.
\end{proof}

\begin{remark}
The first assertion of Corollary~\ref{cor:noskew} also follows from the Hilbert
series: generated in degree one by $x$ and $y$, the algebra $\mathcal H_{p,q}$ has Hilbert
series $\bigl((1-s)^{2}(1-s^{2})\bigr)^{-1}$, which is not $(1-s)^{-N}$ for any
$N$. What the corollary adds is therefore the second assertion, and the two
ranks.
\end{remark}

Chan, Gaddis, Won and Zhang proposed recovering the center as a further fixed
ring of $A^{\Oz(A)}$, and did so for $A(0,\pm1)$
\cite[Section~5.1, Examples~5.4 and~5.5]{ChanGaddisWonZhang2025}. In our family
this always works, with a cyclic group of order $t$.

\begin{proposition}\label{prop:Ct}
Give $z^{i}\theta^{j}$ degree $p^{i}q^{-j}\in\Gamma$. The Veronese ring $S$ of
Proposition~\ref{prop:fijo} is the sum of the components with degree in
$\langle pq\rangle$, a cyclic group of order $t$, and $R$ is the component of
degree $1$. Grading $U$ and $V$ trivially, this makes
$\mathcal H_{p,q}^{\Oz(\mathcal H_{p,q})}\cong S[U,V]/(UV-F)$ into a
$\langle pq\rangle$-graded algebra with
\begin{align*}
\bigl(\mathcal H_{p,q}^{\Oz(\mathcal H_{p,q})}\bigr)_{1}&=Z(\mathcal H_{p,q}).
\end{align*}
Equivalently, the dual group $C_t=\widehat{\langle pq\rangle}$ acts faithfully
on $\mathcal H_{p,q}^{\Oz(\mathcal H_{p,q})}$ with fixed ring
$Z(\mathcal H_{p,q})$, and
\begin{align*}
\rk_{Z(\mathcal H_{p,q})}\mathcal H_{p,q}^{\Oz(\mathcal H_{p,q})}&=|C_t|=t,
&
\rk_{Z(\mathcal H_{p,q})}\mathcal H_{p,q}&=|\Oz(\mathcal H_{p,q})|\cdot|C_t| .
\end{align*}
Moreover $C_t=\Gal\bigl(\mathcal H_{p,q}^{\Oz(\mathcal H_{p,q})}/Z(\mathcal H_{p,q})\bigr)$
in the sense of \cite[Definition~1.1]{ChanGaddisWonZhang2025}.
No nontrivial element of $C_t$ is the restriction of an automorphism of
$\mathcal H_{p,q}$, graded or not.
\end{proposition}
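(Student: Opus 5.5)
The plan is to work entirely with the $\Gamma$-grading of $\K[z,\theta]$ from the proof of Proposition~\ref{prop:presentacion}. There, $(i,j)\mapsto p^{i}q^{-j}$ is surjective onto $\Gamma$ with kernel $L$, and $R$ is the degree-$1$ part.

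\textbf{Step 1: $S$ is the $\langle pq\rangle$-part.} A monomial $z^{i}\theta^{j}$ has degree in $\langle pq\rangle$ if and only if $p^{i}q^{-j}=(pq)^{k}$ for some $k$. Equivalently, $p^{i-k}=q^{j+k}$, that is, $(i-k,j+k)\in L$, so $i+j\in\pi(L)=e\Z$ by Lemma~\ref{lem:aritmetica}. Conversely, if $e\mid i+j$ then $(i,j)$ differs from an element of $L$ by a vector $(-k,k)$: since $\pi(L)=e\Z$, pick $(i',j')\in L$ with $i'+j'=i+j$ and put $k=i-i'$. Hence $S=\bigoplus_{\gamma\in\langle pq\rangle}\K[z,\theta]_\gamma$, and $R$ is its component of degree $1$. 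Every degree in $\langle pq\rangle$ actually occurs, for instance $z^{k}\theta^{k'}$ with suitable exponents. So the grading group is all of $\langle pq\rangle$, which has order $t$.

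\textbf{Step 2: grade the fixed ring and read off the ranks.} Give $U,V$ the trivial degree. The relation $UV-F$ is homogeneous of degree $1$ because $F\in R$. By Proposition~\ref{prop:fijo}, $S[U,V]/(UV-F)$ is then $\langle pq\rangle$-graded. Its degree-$1$ part is $R[U,V]/(UV-F)$, which is $Z$ by Proposition~\ref{prop:presentacion}, using the free $R$-basis $\{U^k\}\cup\{V^k\}$.

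A grading by a finite abelian group $\langle pq\rangle$ is the same as an action of its dual $C_t$, with fixed ring the identity component. The action is faithful because every character of $\langle pq\rangle$ is detected on a nonzero component.

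The rank equalities then come from Proposition~\ref{prop:fijo} and Corollary~\ref{cor:noskew}: the first is $t$, and the second is $\ell^{2}=(\ell^{2}/t)\cdot t$, using Proposition~\ref{ordenozono}.

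\textbf{Step 3: $C_t$ is the Galois group.} To identify $C_t$ with $\Gal$ in the sense of \cite[Definition~1.1]{ChanGaddisWonZhang2025}, I would argue as follows. The fixed ring is a commutative normal domain, finite over $Z$ of generic rank $t$, and $C_t$ is a group of $Z$-automorphisms of order $t$. Its automorphism group over $Z$ embeds in $\Gal(\mathrm{Frac}/\mathrm{Frac}(Z))$, which has order at most $t$. Hence it equals $C_t$.

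\textbf{Step 4: no nontrivial element of $C_t$ extends.} This is the main obstacle. Suppose $\sigma\in\Aut(\mathcal H_{p,q})$ restricts to $\chi\in C_t$ on $\mathcal H^{\Oz}$. Then $\sigma$ fixes $Z$ pointwise, so $\sigma\in\Oz(\mathcal H_{p,q})$. But every ozone automorphism fixes $\mathcal H^{\Oz}$ pointwise, so $\chi=\mathrm{id}$.

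This argument needs only that $\sigma$ preserve $\mathcal H^{\Oz}$, which holds because it restricts to it by assumption. Moreover, ungraded automorphisms are covered, since $\Oz$ is defined as all $Z$-automorphisms. I would double-check that no hidden hypothesis of Lemma~\ref{lem:19} is needed, but the argument avoids that lemma entirely.
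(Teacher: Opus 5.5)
Your proposal is correct and follows essentially the same route as the paper: you identify $S$ with the $\langle pq\rangle$-part via $\pi(L)=e\Z$ (the paper phrases this as $p^{i}q^{-j}\equiv q^{-(i+j)}$ modulo $\langle pq\rangle$), read off $Z$ as the degree-one part, bound the Galois group by the degree $t$ of $\mathrm{Frac}(\mathcal H_{p,q}^{\Oz(\mathcal H_{p,q})})$ over $\mathrm{Frac}(Z)$, and observe that any extension of $\chi$ to $\mathcal H_{p,q}$ lies in $\Oz(\mathcal H_{p,q})$ and therefore fixes the ozone fixed ring. The only loose ends are cosmetic: your ``suitable exponents'' for faithfulness can be made concrete as in the paper, since $z^{e},z^{e-1}\theta\in S$ have degrees differing by $pq$, and the normality of the fixed ring that you assert in Step~3 is neither proved nor needed.
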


\begin{proof}
In $\Gamma/\langle pq\rangle$ we have $\overline p=\overline q^{-1}$, so
$p^{i}q^{-j}\equiv q^{-(i+j)}$, and $\overline q$ has order $e$ by
Lemma~\ref{lem:aritmetica}. Hence $p^{i}q^{-j}\in\langle pq\rangle$ if and only
if $e\mid i+j$, which describes $S$; and $R$ is the degree-one part by
Proposition~\ref{prop:presentacion}. Since $F\in R$ has degree $1$ and
$S[U,V]/(UV-F)$ is free over $S$ on $\{U^{a}\}_{a\geq0}\cup\{V^{b}\}_{b\geq1}$,
the grading passes to the quotient and its degree-one part is
$R[U,V]/(UV-F)=Z(\mathcal H_{p,q})$. The grading is faithful, because $z^{e}$
and $z^{e-1}\theta$ both lie in $S$ and their degrees differ by $pq$; hence
$\mathrm{Frac}(S)$ has degree $t$ over $\mathrm{Frac}(R)$, and the second rank
follows from Corollary~\ref{cor:noskew}.

For the last claim, let $g$ be an automorphism of $\mathcal H_{p,q}$ that maps
$\mathcal H_{p,q}^{\Oz(\mathcal H_{p,q})}$ to itself and acts there as some
$\chi\in C_t$. The fixed ring of $C_t$ is $Z(\mathcal H_{p,q})$, so $g$ is the
identity on the center. Then $g\in\Oz(\mathcal H_{p,q})$, and $g$ is the identity
on $\mathcal H_{p,q}^{\Oz(\mathcal H_{p,q})}$; hence $\chi=1$. Nothing here uses
the grading of $g$ or the values of $(\alpha,\beta)$.

Finally, each $\chi\in C_t$ acts by an algebra automorphism fixing
$Z(\mathcal H_{p,q})$, so
$C_t\subseteq\Gal\bigl(\mathcal H_{p,q}^{\Oz(\mathcal H_{p,q})}/Z(\mathcal H_{p,q})\bigr)$.
Every element of this Galois group extends uniquely to an automorphism of
$\mathrm{Frac}(\mathcal H_{p,q}^{\Oz(\mathcal H_{p,q})})$ over
$\mathrm{Frac}(Z(\mathcal H_{p,q}))$, a field extension of degree $t$, and
distinct elements have distinct extensions. So the Galois group has at most
$t=|C_t|$ elements, and the inclusion is an equality.
\end{proof}

\subsection{Normal elements}\label{sec:normales}

By Lemma~\ref{lem:19}, the computation of $\Oz(\mathcal H_{p,q})$ reduces to determining the automorphisms induced by conjugation by homogeneous normal elements. We classify these elements and obtain an independent proof of
Theorem~\ref{ozonoH_pq}, without using the explicit description of the center.  The element $z^{m}$ is central
because $p^{m}=1$; the element $x^{\ell}$ commutes with $z$ because
$p^{\ell}=1$, and with $y$ because, by \eqref{eq:k1},
$yx^{\ell}=q^{\ell}x^{\ell}y+\pqnum{\ell}x^{\ell-1}z=x^{\ell}y$, as $t\mid\ell$;
and the same holds for $y^{\ell}$ by Lemma~\ref{lem:anti}. By the PBW basis,
$\mathcal H_{p,q}$ is a finitely generated module over the central subalgebra
$\K[z^{m},x^{\ell},y^{\ell}]$.

\begin{theorem}\label{thm:normales}
Let $p,q\in\K^{\times}$ be roots of unity with $pq\neq1$, and let $\nu\in\mathcal H_{p,q}$ be nonzero, normal and
$\Z^{2}$-homogeneous of bidegree $(u,v)$ with $u\geq v$. Then $t\mid u-v$, and
there is a single coset $C$ of $L$ in $\Z^{2}$ such that
\begin{align*}
\nu=x^{\,u-v}\!\!\sum_{\substack{(i,j)\in C\\ i+j=v}}\!\! c_{ij}\,z^{i}\theta^{j},
\qquad c_{ij}\in\K .
\end{align*}
Conversely, if $t\mid u-v$ then every such element is normal, with
\begin{align*}
\nu x=\xi\,x\nu,\qquad \nu y=\xi^{-1}q^{-(u-v)}\,y\nu,
\qquad \xi=p^{-i}q^{\,j}\ \ \text{for any }(i,j)\in C,
\end{align*}
and $\nu z=p^{\,u-v}z\nu$. The statement for $v\geq u$ is symmetric, with $y^{\,v-u}$ on the right.
\end{theorem}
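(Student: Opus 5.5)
By Lemma~\ref{lem:comp}, write $\nu=x^{a}g$ with $a=u-v\geq0$ and $g=\sum_{i+j=v}c_{ij}z^{i}\theta^{j}\in\K[z,\theta]$ homogeneous of degree $v$. The plan is to test normality against each generator in turn, exactly as in the proof of Proposition~\ref{prop:centro-bigrad}, but now asking that $\nu w\in\K^{\times}w\nu$ rather than $\nu w=w\nu$. In a $\Z^{2}$-graded domain, a homogeneous normal element satisfies $\nu w=\sigma(w)\nu$ for an automorphism $\sigma$. Comparing bidegrees shows that $\sigma$ sends each generator to a homogeneous element of the same bidegree. Since $(\mathcal H_{p,q})_{(1,0)}=\K x$ and $(\mathcal H_{p,q})_{(0,1)}=\K y$, we get $\nu x=\xi x\nu$ and $\nu y=\eta y\nu$ for scalars $\xi,\eta\in\K^{\times}$.

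First I treat $x$. The identity $z^{i}\theta^{j}x=p^{-i}q^{j}xz^{i}\theta^{j}$ gives $\nu x=x^{a+1}\sum c_{ij}p^{-i}q^{j}z^{i}\theta^{j}$. We also have $x\nu=x^{a+1}g$, and by Lemma~\ref{lem:comp} the elements $x^{a+1}z^{i}\theta^{j}$ are linearly independent. Hence $\nu x=\xi x\nu$ forces $p^{-i}q^{j}=\xi$ on the whole support of $g$. The condition that $p^{-i}q^{j}$ be constant says precisely that all $(i,j)$ in the support differ by elements of $L$, so they lie in a single coset $C$ of $L$, with $\xi=p^{-i}q^{j}$ for any $(i,j)\in C$. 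On that support we then also have $z^{i}\theta^{j}y=\xi^{-1}yz^{i}\theta^{j}$, so $gy=\xi^{-1}yg$.

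Next I treat $y$, which I expect to be the main step. We have $\nu y=x^{a}gy=\xi^{-1}x^{a}yg$, and $\eta y\nu=\eta yx^{a}g$. For $a\geq1$, expand both sides with \eqref{eq:k1} and with $x^{a}y=(q-p^{-1})^{-1}x^{a-1}(\theta-z)$. Cancelling $x^{a-1}$ and $g$ (we are in a domain), the condition becomes the vanishing of the linear form
\[
\xi^{-1}(q-p^{-1})^{-1}(\theta-z)-\eta\bigl(q^{a}(q-p^{-1})^{-1}(\theta-z)+\pqnum{a}z\bigr)
\]
in $\K[z,\theta]$. The $\theta$-coefficient gives $\eta=\xi^{-1}q^{-a}$. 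Substituting this, the $z$-coefficient reduces to $-\eta\pqnum{a}=0$, which forces $(pq)^{a}=1$ by \eqref{eq:pq}, that is, $t\mid a$. The case $a=0$ gives $\eta=\xi^{-1}$ directly and $t\mid0$ trivially.

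For $z$, the relation $zx=p^{-1}xz$ together with the fact that $z$ commutes with $\K[z,\theta]$ gives $\nu z=p^{a}z\nu$ with no further condition. For the converse, when $t\mid a$ and $g$ is supported on a single coset $C$, the same three computations read backwards show $\nu x=\xi x\nu$, $\nu y=\xi^{-1}q^{-a}y\nu$ and $\nu z=p^{a}z\nu$. Since $x,y,z$ generate and $\mathcal H_{p,q}$ is a domain, this gives $\nu\mathcal H_{p,q}=\mathcal H_{p,q}\nu$. (Here $\nu\mathcal H\subseteq\mathcal H\nu$ follows from the three relations; the reverse inclusion follows because the scalars are invertible, or equivalently because the twisting map is a diagonal automorphism of type $\varphi_{a,b}$.) Finally, the case $v\geq u$ follows by applying the anti-automorphism $\tau$ of Lemma~\ref{lem:anti}, which fixes $z$ and $\theta$, swaps the bidegrees, and preserves normality.
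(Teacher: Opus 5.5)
Your proposal is correct and follows the paper's proof essentially step for step. You write $\nu=x^{u-v}g$ via Lemma~\ref{lem:comp}, use bidegrees to get $\nu x=\xi x\nu$ and $\nu y=\eta y\nu$, deduce the single coset $C$ from the $x$-relation, and extract $\eta=\xi^{-1}q^{-(u-v)}$ and $\pqnum{u-v}=0$ from the linear form in $\K[z,\theta]$, reducing $v\geq u$ via $\tau$; your remark that the twisting map in the converse is the diagonal automorphism $\varphi_{\xi,\xi^{-1}q^{-(u-v)}}$ (consistent with $\nu z=p^{u-v}z\nu$ because $(pq)^{u-v}=1$) only makes explicit what the paper calls ``reversing the preceding argument.''
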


\begin{proof}
By Lemma~\ref{lem:comp} we may write $\nu=x^{a'}g$ with $a'=u-v$ and
$g=\sum c_{ij}z^{i}\theta^{j}$ homogeneous of degree $v$ in the polynomial ring
$\K[z,\theta]$.

Since $\nu$ is normal we have $\nu x\in\mathcal H_{p,q}\nu$, say $\nu x=w\nu$;
comparing bidegrees, $w$ is homogeneous of bidegree $(1,0)$, and by
Lemma~\ref{lem:comp} that component is $\K x$. Hence $\nu x=\xi\,x\nu$ for some
$\xi\in\K^{\times}$, and likewise $\nu y=\xi_2\,y\nu$ with $\xi_2\in\K^{\times}$. From
$z^{i}\theta^{j}x=p^{-i}q^{j}\,x\,z^{i}\theta^{j}$ we get
$\nu x=x^{a'+1}\sum c_{ij}p^{-i}q^{j}z^{i}\theta^{j}$ and $x\nu=x^{a'+1}g$, so
$p^{-i}q^{j}=\xi$ for every $(i,j)$ in the support of $g$; that is, the support
lies in one coset $C$ of $L$.

On that coset $z^{i}\theta^{j}y=p^{i}q^{-j}\,y\,z^{i}\theta^{j}
=\xi^{-1}y\,z^{i}\theta^{j}$, so $gy=\xi^{-1}yg$ and $\nu y=\xi^{-1}x^{a'}yg$.
On the other hand \eqref{eq:k1} gives $yx^{a'}=q^{a'}x^{a'}y+\pqnum{a'}x^{a'-1}z$,
so for $a'\geq1$, using $x^{a'}y=(q-p^{-1})^{-1}x^{a'-1}(\theta-z)$,
\begin{align*}
\nu y-\xi_2\,y\nu
=x^{a'-1}\Bigl[\tfrac{\xi^{-1}-\xi_2q^{a'}}{q-p^{-1}}(\theta-z)
-\xi_2\pqnum{a'}\,z\Bigr]g .
\end{align*}
Since $\mathcal H_{p,q}$ is a domain and $g\neq0$, this vanishes exactly when the bracket
does. Its $\theta$-coefficient gives $\xi_2=\xi^{-1}q^{-a'}$, and then its
$z$-coefficient reduces to $-\xi_2\pqnum{a'}$, so $\pqnum{a'}=0$, that is
$t\mid a'$ by \eqref{eq:pq}. For $a'=0$ there is nothing to prove.
The converse follows by reversing the preceding argument.

For the last relation, $z$ commutes with $z^{i}\theta^{j}$ and
$zx^{a'}=p^{-a'}x^{a'}z$, so $\nu z=p^{a'}z\nu$. The case $v\geq u$ follows from
Lemma~\ref{lem:anti}.
\end{proof}

\begin{corollary}\label{cor:ozono-normales}
$\Oz(\mathcal H_{p,q})=\{\varphi_{\alpha,\beta}:\alpha\in\mu_\ell,\ \alpha\beta\in\mu_{\ell/t}\}
\cong\mu_\ell\times\mu_{\ell/t}$.
\end{corollary}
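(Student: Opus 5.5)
The plan is to obtain the ozone group from the normal elements alone, without using the description of the center in Proposition~\ref{prop:centro-bigrad}. By Lemma~\ref{lem:19}(i)--(ii), applied to the $\Z^{2}$-graded domain $\mathcal H_{p,q}$, which is module-finite over the central subalgebra $\K[z^{m},x^{\ell},y^{\ell}]$, every $\sigma\in\Oz(\mathcal H_{p,q})$ equals $\eta_\nu$ for a nonzero homogeneous normal $\nu$. Theorem~\ref{thm:normales} then says how such a $\nu$ commutes with the generators. If $\nu$ has bidegree $(u,v)$ with $u\geq v$, then $\nu x=\xi x\nu$ and $\nu y=\xi^{-1}q^{-(u-v)}y\nu$. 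Hence $\eta_\nu(x)=\nu^{-1}x\nu=\xi^{-1}x$ and $\eta_\nu(y)=\xi q^{u-v}y$, so $\eta_\nu=\varphi_{\alpha,\beta}$ with $\alpha=\xi^{-1}$ and $\alpha\beta=q^{u-v}$. The case $v\geq u$ is symmetric through $\tau$ of Lemma~\ref{lem:anti}, and there $\alpha\beta=q^{-(v-u)}$ or a similar power.

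Next I would pin down the constraints on $(\alpha,\beta)$. The scalar $\xi=p^{-i}q^{j}$ lies in $\Gamma=\mu_\ell$, so $\alpha\in\mu_\ell$. Theorem~\ref{thm:normales} gives $t\mid u-v$, so $\alpha\beta=q^{\pm(u-v)}\in\langle q^{t}\rangle$. I would check that $q^{t}$ has order $\ell/t$. We have $\ord(q^{t})=n/\gcd(n,t)=\operatorname{lcm}(n,t)/t$, and $\operatorname{lcm}(n,t)=\ell$ by Lemma~\ref{lem:aritmetica}, so $\ord(q^{t})=\ell/t$. Hence $\langle q^{t}\rangle=\mu_{\ell/t}$, and this gives $\Oz\subseteq\{\varphi_{\alpha,\beta}:\alpha\in\mu_\ell,\ \alpha\beta\in\mu_{\ell/t}\}$.

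For the reverse inclusion, every inner automorphism $\eta_\nu$ by a regular normal element fixes the center pointwise, so all the $\eta_\nu$ lie in $\Oz$. The normal elements $z$, $\theta$ and $x^{t}$ (Theorem~\ref{thm:normales} with $u-v=0$ or $t$) give
\begin{align*}
\eta_z=\varphi_{p,p^{-1}},\qquad \eta_\theta=\varphi_{q^{-1},q},\qquad \eta_{x^{t}}=\varphi_{1,q^{t}}.
\end{align*}
Since $\langle p,q\rangle=\mu_\ell$, the first two generate a subgroup of $\{\varphi_{\alpha,\alpha^{-1}}\}$ whose $\alpha$-coordinates exhaust $\mu_\ell$. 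Combined with $\eta_{x^{t}}$, whose $\alpha\beta$-coordinate $q^{t}$ generates $\mu_{\ell/t}$, we get every $\varphi_{\alpha,\beta}$ in the target set. The isomorphism with $\mu_\ell\times\mu_{\ell/t}$ is then the map $\varphi_{\alpha,\beta}\mapsto(\alpha,\alpha\beta)$, as in Corollary~\ref{ozono2H_pq}.

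The main thing to verify carefully is the sign and direction bookkeeping in $\eta_\nu$. In particular, for $v\geq u$ the product $\alpha\beta$ should again be a power of $q^{t}$, up to a factor that already lies in $\mu_{\ell/t}$. I would confirm this by transporting the relations of Theorem~\ref{thm:normales} through $\tau$, which swaps $x$ and $y$ and reverses products.
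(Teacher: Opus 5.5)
Your proposal is correct and follows the paper's proof. Lemma~\ref{lem:19} and Theorem~\ref{thm:normales} give $\eta_\nu=\varphi_{\alpha,\beta}$ with $\alpha\in\Gamma=\mu_\ell$ and $\alpha\beta=q^{\pm(u-v)}\in\langle q^{t}\rangle=\mu_{\ell/t}$; for $v>u$, transporting through $\tau$ gives exactly $\alpha\beta=q^{-(v-u)}$. The only difference is in the reverse inclusion: you generate the group from $\eta_z,\eta_\theta,\eta_{x^{t}}$, which is the paper's Corollary~\ref{cor:generadores-ozono}, whereas the paper realizes each pair directly by one normal element $x^{a'}z^{i}\theta^{j}$ with $p^{-i}q^{j}=\alpha^{-1}$.
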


\begin{proof}
By Lemma~\ref{lem:19} every element of $\Oz(\mathcal H_{p,q})$ is
$\eta_\nu$ for some homogeneous normal $\nu$, and conversely every such $\eta_\nu$ fixes the center pointwise, since central elements
commute with $\nu$. Suppose first that $\nu$ has bidegree $(u,v)$ with
$u\geq v$. By Theorem~\ref{thm:normales}, $\nu x=\xi x\nu$ and
$\nu y=\xi^{-1}q^{-a'}y\nu$ with $t\mid a'$, hence
$\eta_\nu(x)=\nu^{-1}x\nu=\xi^{-1}x$ and $\eta_\nu(y)=\xi q^{a'}y$; that is,
$\eta_\nu=\varphi_{\alpha,\beta}$ with $\alpha=\xi^{-1}$ and
$\alpha\beta=q^{a'}$. If instead $v>u$, apply $\tau$: the element $\tau(\nu)$ is normal of bidegree $(v,u)$, and reading Theorem~\ref{thm:normales} backwards through
$\tau$ gives $\eta_\nu(x)=\xi'^{-1}q^{-a'}x$ and $\eta_\nu(y)=\xi'y$ with
$t\mid a'$, so again $\alpha\beta\in\langle q^{t}\rangle$. Here $\xi=p^{-i}q^{j}$ with $i,j\geq0$ arbitrary, so
$\alpha$ runs over all of $\Gamma=\mu_\ell$; and $a'$ runs over $t\Z_{\geq0}$,
so $q^{a'}$ runs over $\langle q^{t}\rangle$, a group of order
$n/\gcd(n,t)=\mathrm{lcm}(n,t)/t=\ell/t=e$. The two choices are independent:
given $\alpha\in\mu_\ell$ and $a'\in t\Z_{\geq0}$, pick $i,j\geq0$ with
$p^{-i}q^{j}=\alpha^{-1}$ and take $\nu=x^{a'}z^{i}\theta^{j}$.
\end{proof}

\begin{corollary}\label{cor:generadores-ozono}
$\Oz(\mathcal H_{p,q})$ is generated by conjugation by the normal elements $z$,
$\theta$ and $x^{t}$. Explicitly,
\begin{align*}
\eta_z=\varphi_{p,p^{-1}},\qquad
\eta_\theta=\varphi_{q^{-1},q},\qquad
\eta_{x^{t}}=\varphi_{1,q^{t}},
\end{align*}
and under the isomorphism $\Psi$ of Corollary~\ref{ozono2H_pq} the subgroup
$\langle\eta_z,\eta_\theta\rangle$ is the factor $\mu_\ell$ while
$\langle\eta_{x^{t}}\rangle$ is the factor $\mu_{\ell/t}$.
\end{corollary}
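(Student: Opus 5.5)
The computation of $\eta_z$, $\eta_\theta$ and $\eta_{x^t}$ is direct from the commutation rules, and the generation statement then follows from the isomorphism $\Psi$ of Corollary~\ref{ozono2H_pq}.

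\emph{The three conjugations.} Recall $\eta_a(w)=a^{-1}wa$.
\begin{itemize}
\item For $z$: the relations $zx=p^{-1}xz$ and $zy=pyz$ give $z^{-1}xz=px$ and $z^{-1}yz=p^{-1}y$. Hence $\eta_z=\varphi_{p,p^{-1}}$.
\item For $\theta$: \eqref{eq:theta} gives $\theta x=qx\theta$ and $\theta y=q^{-1}y\theta$, so $\theta^{-1}x\theta=q^{-1}x$ and $\theta^{-1}y\theta=qy$. Hence $\eta_\theta=\varphi_{q^{-1},q}$.
\item For $x^t$: it commutes with $x$, so $\eta_{x^t}(x)=x$. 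By \eqref{eq:k1} together with $\pqnum{t}=0$ we have $yx^t=q^tx^ty$, so $x^{-t}yx^t=q^{t}y$. Hence $\eta_{x^t}=\varphi_{1,q^t}$.
\end{itemize}
Each of these elements is normal, so its conjugation fixes the center pointwise and therefore lies in $\Oz(\mathcal H_{p,q})$. This is consistent with Theorem~\ref{ozonoH_pq}: the products $ab$ are $1$, $1$ and $q^t$, and $(q^t)^e=q^{\ell}=1$.

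\emph{Images under $\Psi$.} Recall $\Psi(\varphi_{a,b})=(a,ab)$. This gives
\[
\Psi(\eta_z)=(p,1),\qquad \Psi(\eta_\theta)=(q^{-1},1),\qquad \Psi(\eta_{x^t})=(1,q^t).
\]
The first two generate $\langle p,q\rangle\times\{1\}=\Gamma\times1=\mu_\ell\times1$, which is the first factor. The third generates $1\times\langle q^t\rangle$. The group $\langle q^t\rangle$ has order $n/\gcd(n,t)=\operatorname{lcm}(n,t)/t$, and this equals $\ell/t=e$ by Lemma~\ref{lem:aritmetica}. So $\langle q^t\rangle=\mu_e$, which is the second factor.

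\emph{Conclusion.} Together the three elements generate $\mu_\ell\times\mu_e$, and $\Psi$ is an isomorphism onto this group. Therefore $\eta_z$, $\eta_\theta$ and $\eta_{x^t}$ generate $\Oz(\mathcal H_{p,q})$, and the subgroups they span correspond to the factors $\mu_\ell$ and $\mu_{\ell/t}$ as stated.

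The only point that needs care is the order of $q^t$, and that is exactly $\operatorname{lcm}(n,t)=\ell$ from Lemma~\ref{lem:aritmetica}.
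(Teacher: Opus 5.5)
Your proposal is correct and follows essentially the same route as the paper: the paper reads the three conjugations off Theorem~\ref{thm:normales} (with $\xi=p^{-1},q,1$ and $a'=0,0,t$), while you compute them directly from the defining relations and \eqref{eq:k1}, which is the same computation. The images $(p,1)$, $(q^{-1},1)$, $(1,q^{t})$ under $\Psi$ and the order count $|\langle q^{t}\rangle|=\operatorname{lcm}(n,t)/t=\ell/t$ via Lemma~\ref{lem:aritmetica} are exactly as in the paper.
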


\begin{proof}
Apply Theorem~\ref{thm:normales} to $\nu=z$, $\theta$ and $x^{t}$, whose
bidegrees are $(1,1)$, $(1,1)$ and $(t,0)$, and read off $\eta_\nu$ as in the
proof of Corollary~\ref{cor:ozono-normales}: for $z$ one gets $\xi=p^{-1}$ and
$a'=0$, for $\theta$ one gets $\xi=q$ and $a'=0$, and for $x^{t}$ one gets
$\xi=1$ and $a'=t$. Hence $\Psi(\eta_z)=(p,1)$, $\Psi(\eta_\theta)=(q^{-1},1)$
and $\Psi(\eta_{x^{t}})=(1,q^{t})$. Now $\langle p,q\rangle=\mu_\ell$ and
$\langle q^{t}\rangle$ has order $\mathrm{lcm}(n,t)/t=\ell/t$ by
Lemma~\ref{lem:aritmetica}, so the three images generate
$\mu_\ell\times\mu_{\ell/t}$.
\end{proof}

\begin{remark}
Two features of the proof of Corollary~\ref{cor:ozono-normales} are worth
pointing out. It never uses the center, so it is independent of any description
of $Z(\mathcal H_{p,q})$. And the element $x^{t}$ is normal precisely because
$\pqnum{t}=0$, which by \eqref{eq:pq} is the same vanishing that produces the
factor $\ell/t$; so the index $t$ of Proposition~\ref{ordenozono} has a direct
meaning. The proof also gives Proposition~\ref{diagHpq} independently of
\cite[Corollary~4.7]{ChanGaddisWonZhang2025}, since every $\eta_\nu$ there is
diagonal on $x$ and $y$.
\end{remark}

\begin{remark}[Unique factorization]\label{rem:UFR}
Liu and Wu proved that every noetherian PI Artin--Schelter regular algebra of
global dimension three is a noetherian unique factorization ring in the sense of
Chatters and Jordan \cite{ChattersJordan1986}, \cite[Theorem~4.7]{LiuWu2026UFR},
and that for connected graded ones the central Picard group is the ozone group
\cite[Proposition~3.3]{LiuWu2026UFR}. For down-up algebras the first fact is
also in \cite[Corollary~7.1]{LaunoisLopes2013}, where it is shown moreover that
$A(\alpha,\beta,0)$ is a noetherian UFD in the sense of Chatters
\cite{Chatters1984} exactly when the group generated by its characteristic
roots is torsion-free; for $\mathcal H_{p,q}$ that group is $\Gamma\cong\mu_\ell$,
so $\mathcal H_{p,q}$ is a UFR but not a UFD.

The classification of normal elements makes this explicit. Let $\mathrm{CCl}$ be
the central class group, that is, reflexive fractional ideals modulo those of the
form $\mathcal H_{p,q}k$ with $0\neq k$ in $Z(Q)=\mathrm{Frac}(Z)$, where $Q$ is
the quotient division ring (Proposition~\ref{prop:posner}; see also
\cite{LV20b}). Since $\mathcal H_{p,q}$ is a connected graded UFR and every element
of the ozone group is conjugation by a regular normal element
(Lemma~\ref{lem:19}), sending a normal element $\nu$ to $\eta_\nu$ identifies
$\mathrm{CCl}$ with $\Oz$, and Corollaries~\ref{ozono2H_pq}
and~\ref{cor:generadores-ozono} give
\begin{align*}
\mathrm{CCl}(\mathcal H_{p,q})\cong\operatorname{Picent}(\mathcal H_{p,q})
\cong\Oz(\mathcal H_{p,q})\cong\mu_\ell\times\mu_{\ell/t},
\end{align*}
generated by the classes of $z\mathcal H_{p,q}$, $\theta\mathcal H_{p,q}$ and
$x^{t}\mathcal H_{p,q}$. The last one witnesses the failure of the UFD property.
The element $x^{t}$ is normal, and by Theorem~\ref{thm:normales} it is not a
product of two normal non-units, so $x^{t}\mathcal H_{p,q}$ is a height-one prime
\cite[Proposition~4.10]{LiuWu2026UFR}; it is not completely prime, because
$x\cdot x^{t-1}$ lies in it while $x$ and $x^{t-1}$ do not, $t$ being at least
$2$. Compare \cite[Lemma~6.5]{LaunoisLopes2013}.
\end{remark}

\subsection{The Calabi--Yau property}\label{sec:CY}
It was shown in \cite[Corollary~2.5]{LopesSuarezSuarez2025} that
$\mathcal H_{p,q}$ is graded skew Calabi--Yau of dimension three.
\begin{proposition}\label{prop:CY}
The Nakayama automorphism of $\mathcal H_{p,q}$ is given by
\begin{align}
\mu_{\mathcal H_{p,q}}(x)=pq^{-1}x,
\qquad
\mu_{\mathcal H_{p,q}}(y)=p^{-1}qy,
\qquad
\mu_{\mathcal H_{p,q}}(z)=z.
\label{eq:naka}
\end{align}
\end{proposition}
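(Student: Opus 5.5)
The plan is to compute $\mu_{\mathcal H_{p,q}}$ from the presentation of $\mathcal H_{p,q}$ as an iterated Ore extension. The homological input is the theorem of Liu, Wang and Wu on Nakayama automorphisms of Ore extensions: if $B$ is skew Calabi--Yau with Nakayama automorphism $\mu_B$, then $E=B[w;\sigma,\delta]$ is skew Calabi--Yau and $\mu_E|_B=\sigma^{-1}\mu_B$, while $\mu_E(w)=uw+b$ for some unit $u$ and some $b\in B$.

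Write $\mathcal H_{p,q}=\K[z][x;\sigma_1][y;\sigma_2,\delta]$, and proceed in the following order.
\begin{enumerate}[(1)]
\item The relation $zx=p^{-1}xz$ gives $xz=pzx$, so $\sigma_1(z)=pz$. Starting from $\mu_{\K[z]}=\mathrm{id}$, the theorem gives $\mu(z)=p^{-1}z$ on $B_1=\K[z][x;\sigma_1]$. Since $B_1$ is a quantum plane, its Nakayama automorphism is also known directly, and it yields $\mu_{B_1}(x)=px$ (taking the standard convention $\mu(x_i)=\prod_j q_{ji}x_i$).
\item In the second step, $yz=p^{-1}zy$ and $yx=qxy+z$, so $\sigma_2(z)=p^{-1}z$, $\sigma_2(x)=qx$ and $\delta(x)=z$. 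The theorem then gives $\mu(z)=\sigma_2^{-1}(p^{-1}z)=z$ and $\mu(x)=\sigma_2^{-1}(px)=pq^{-1}x$.
\item For $y$ we get $\mu(y)=uy+b$, and the bigrading pins this down. The Nakayama automorphism of a connected graded skew Calabi--Yau algebra is unique up to inner automorphisms, which are trivial here. It therefore commutes with every graded automorphism, in particular with all the $\varphi_{a,b}$ of Lemma~\ref{lem:phi}, so it preserves the $\Z^2$-grading. Hence $b=0$, because the only bidegree-$(0,1)$ component is $\K y$ by Lemma~\ref{lem:comp}.
\item Since $\mu$ is an automorphism, applying it to $z=yx-qxy$ gives $z=\mu(z)=u\,pq^{-1}z$. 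Therefore $u=p^{-1}q$, which is \eqref{eq:naka}.
\end{enumerate}

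As an independent check I would compare with the known Nakayama automorphism of a graded down-up algebra $A(\alpha,\beta)$, namely $d\mapsto-\beta d$ and $u\mapsto-\beta^{-1}u$, as computed in the work of Kirkman, Kuzmanovich and Zhang. Here $-\beta=p^{-1}q$, with $d=y$ and $u=x$, so this matches \eqref{eq:naka} exactly. It also confirms $\mu(z)=z$, since the two scalars on $x$ and $y$ multiply to $1$.

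A second route, if convention issues arise, is to write $\mathcal H_{p,q}$ as the derivation-quotient algebra of a twisted superpotential $w$ of degree $4$ in $x,y$. One then reads off $\mu$ from the Mori--Smith criterion $(\mu\otimes\mathrm{id}^{\otimes3})(w)$ equals the cyclic rotation of $w$, up to the homological-determinant scalar. The two cubic relations are the left partial derivatives of $w$.

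The main obstacle is conventions. The literature mixes $\mu$ with $\mu^{-1}$, and $\sigma$ with $\sigma^{-1}$ in the Ore extension formula, and this would invert every scalar. I will fix the convention by testing it on the quantum plane $\K_{q}[x,y]$, where $\mu(x)=q^{-1}x$ and $\mu(y)=qy$ under the convention $\mathrm{Ext}^d(\K,A)\cong{}^{1}\K^{\mu}$ used in \cite{LopesSuarezSuarez2025}. Once the convention is fixed, the remaining computations in steps (1)--(4) are routine.
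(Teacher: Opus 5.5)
Your argument is correct, but it takes a different route from the paper. The paper's proof is a citation. Through the dictionary $\mathcal H_{p,q}\cong A(p^{-1}+q,-p^{-1}q)$ with $d=y$, $u=x$, it takes $\mu(y)=-\beta y$ and $\mu(x)=-\beta^{-1}x$ from the down-up formula \cite[(E1.5.6)]{LuMaoZhang2017}. It then gets $\mu(z)=z$ by applying $\mu$ to $z=yx-qxy$. That is exactly your ``independent check''.

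You instead climb the Ore tower $\K[z][x;\sigma_1][y;\sigma_2,\delta]$ with Liu--Wang--Wu, use the torus $\varphi_{a,b}$ to force $\mu(y)\in\K y$, and let the relation $z=yx-qxy$ fix the scalar. The numbers check out. Applying $\sigma_2^{-1}$ to $\mu_{B_1}\colon z\mapsto p^{-1}z,\ x\mapsto px$ gives $z\mapsto z$ and $x\mapsto pq^{-1}x$, and then $u\,pq^{-1}=1$. Since $\sigma_2$ and $\mu_{B_1}$ are both diagonal, the order in $\sigma^{-1}\mu_B$ is irrelevant. A global change of convention would only replace $\mu$ by $\mu^{-1}$, so normalizing on the quantum plane is the right safeguard.

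What your route buys is a computation done entirely in the $\mathcal H_{p,q}$ presentation, with no use of the down-up dictionary. It shows that $\mu$ agrees with the Nakayama automorphism of the skew polynomial ring $\gr\mathcal H_{p,q}$. It also shows that the derivation term $\delta(x)=z$ is what determines the scalar on $y$; in a skew polynomial ring the relations would not pin it down. The paper's route is one line and inherits its convention from the source.

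Two small repairs are needed. In step (3), ``unique up to inner automorphisms'' does not by itself give commutation with $\varphi_{a,b}$. You also need that $\varphi\mu\varphi^{-1}$ is again a Nakayama automorphism for every $\varphi\in\Aut(\mathcal H_{p,q})$, by transport of structure. Triviality of inner automorphisms then gives $\varphi\mu=\mu\varphi$. Second, Lemma~\ref{lem:comp} assumes $pq\neq1$. For bidegree $(0,1)$ the PBW basis alone gives $(\mathcal H_{p,q})_{(0,1)}=\K y$, so cite that instead. Your argument then holds for all $p,q\in\K^\times$, which is how the paper uses the proposition after Corollary~\ref{cor:Hf}.
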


\begin{proof}
Using the identification of \(\mathcal H_{p,q}\) as a down-up algebra
established above, with parameter \(\beta=-p^{-1}q\neq0\), and by the computation of the
Nakayama automorphism for graded down-up algebras given in
\cite[(E1.5.6)]{LuMaoZhang2017}, if \(x\) and \(y\) denote the down and up
generators, then \(\mu(y)=-\beta y\) and \(\mu(x)=-\beta^{-1}x\),
so
$$
\mu_{\mathcal H_{p,q}}(y)=-\beta y=p^{-1}qy,
\qquad
\mu_{\mathcal H_{p,q}}(x)=-\beta^{-1}x=pq^{-1}x,
$$
and finally
$\mu_{\mathcal H_{p,q}}(z)=\mu(y)\mu(x)-q\,\mu(x)\mu(y)=z$.
\end{proof}

\begin{corollary}\label{calabiyau}
$\mathcal H_{p,q}$ is Calabi--Yau if and only if $p=q$.
\end{corollary}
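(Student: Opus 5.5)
The approach is to use the standard fact that a graded skew Calabi--Yau algebra is Calabi--Yau exactly when its Nakayama automorphism is inner. Since $\mathcal H_{p,q}$ is connected graded, the only units are the nonzero scalars, and conjugation by a scalar is the identity. So the inner automorphisms are trivial. By \cite[Corollary~2.5]{LopesSuarezSuarez2025}, $\mathcal H_{p,q}$ is graded skew Calabi--Yau of dimension three. Hence it is Calabi--Yau if and only if $\mu_{\mathcal H_{p,q}}=\mathrm{id}$.

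The computation then comes from Proposition~\ref{prop:CY}. By \eqref{eq:naka}, $\mu_{\mathcal H_{p,q}}=\varphi_{pq^{-1},\,p^{-1}q}$ in the notation of Lemma~\ref{lem:phi}. That lemma says $(a,b)\mapsto\varphi_{a,b}$ is injective. So $\mu_{\mathcal H_{p,q}}$ is the identity if and only if $pq^{-1}=1$ and $p^{-1}q=1$, that is, if and only if $p=q$. Equivalently $-\beta=p^{-1}q=\omega_1\omega_2=1$, which gives the formulation in Theorem~A(v). No root-of-unity hypothesis is needed beyond $pq\neq1$, which is already built into the skew Calabi--Yau statement being cited.

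The one point needing care is the equivalence ``Calabi--Yau $\iff$ skew Calabi--Yau with trivial Nakayama automorphism.'' I will quote it in its graded form: for a connected graded skew CY algebra the Nakayama automorphism is unique up to inner automorphisms, and inner automorphisms are trivial here. One could also ask whether ungraded Calabi--Yau is meant. For the ungraded notion I would argue that the Nakayama automorphism is determined up to inner automorphisms of $\mathcal H_{p,q}$ as an ungraded algebra. The units of this domain, which is filtered with associated graded the quantum affine space, are still just the scalars, by a leading-term argument. So the conclusion is unchanged.
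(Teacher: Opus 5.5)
Your proposal is correct and follows the same route as the paper: Calabi--Yau means skew Calabi--Yau with inner Nakayama automorphism, inner automorphisms of the connected graded domain $\mathcal H_{p,q}$ are trivial, and the formula \eqref{eq:naka} then gives the identity exactly when $p=q$. Your appeal to the injectivity in Lemma~\ref{lem:phi} and your remark on the ungraded notion are small additions, and they are fine.
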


\begin{proof}
An algebra \(A\) is Calabi--Yau if and only if \(A\) is skew Calabi--Yau and its
Nakayama automorphism \(\mu_A\) is inner
\cite[Definition~1.1(c)]{LuMaoZhang2017}. Since \(\mathcal H_{p,q}\) is a
connected \(\mathbb N\)-graded domain, its units are scalars and its inner
automorphisms are trivial, so by \eqref{eq:naka} the algebra
\(\mathcal H_{p,q}\) is Calabi--Yau if and only if $p=q$.
\end{proof}

\begin{remark}
The ``if'' part of Corollary~\ref{calabiyau} was proved independently in
\cite[Corollary~3.4]{LopesSuarezSuarez2025}: there $\mathcal H_q$ is written as the
Jacobian algebra of a potential and \cite[Theorem~3.3]{LopesSuarezSuarez2025}
shows that the Calabi--Yau property survives the nonhomogeneous deformations
$\mathcal H_q(f',g')$ of Lopes and Razavinia \cite{LopesRazavinia2022Representations}, with $f'=q^{-1}w+k$ and
$g'=cw^{2}+dw+e$ in $\K[w]$, defined by $wx=xf'$, $yw=f'y$ and $yx-qxy=g'$. Our contribution is the converse, which
needs the Nakayama automorphism and not only a potential.
\end{remark}

\begin{remark}
Since $\Oz(\mathcal H_{p,q})$ is never trivial (Proposition~\ref{ordenozono}),
every $\mathcal H_{q,q}$ is Calabi--Yau with nontrivial ozone group. That the
implication of \cite[Theorem~1.11]{ChanGaddisWonZhang2025} is not an
equivalence was already visible there, in $A(0,-1)\cong\mathcal H_{i,i}$
\cite[Example~4.8(2)]{ChanGaddisWonZhang2025} and in the Calabi--Yau skew
polynomial rings \cite[Lemma~2.2]{ChanGaddisWonZhang2025}.

The algebras $B_q(f)$ of Gaddis
and Yee sit on the other side: they are Calabi--Yau and, for the parameters of
\cite[Theorem~3.13]{GaddisYee2026TrivialOzone}, have trivial ozone group, as the
implication predicts. So in dimension three, among Calabi--Yau PI algebras, both
behaviours occur, and the size of the ozone group is not decided by the
Calabi--Yau property.
\end{remark}

\begin{remark}
For $p=q$ the Gorenstein property of $Z(\mathcal H_p)$ was obtained in
Example~\ref{ex:gor} from the congruence of
Proposition~\ref{prop:gorenstein-crit}. It also follows from
Corollary~\ref{calabiyau} together with
\cite[Theorem~1.11]{ChanGaddisWonZhang2025}, whose hypotheses are satisfied.
Taken with Corollary~\ref{cor:no-gorenstein}, this says that
\cite[Section~5.1]{ChanGaddisWonZhang2025} has a positive answer along the
diagonal $p=q$ and a negative one in general. More precisely, when $m=n$ the
condition \eqref{eq:gor} reduces to $p=q$: for characteristic roots of the same
order, the center is Gorenstein exactly when the algebra is Calabi--Yau.
\end{remark}

\section{The case \texorpdfstring{$pq=1$}{pq=1}}\label{sec:pq1}

Throughout this section, we assume $\K=\C$, since
the description of the center relies on
\cite{Zhao1999}. We now consider the case $pq=1$, which was
excluded from the preceding sections. In this
case, $\theta=z$, and Lemma~\ref{lem:comp}
no longer applies. Moreover,
\begin{align*}
\pqnum{k}=kq^{k-1}\neq0
\end{align*}
for every $k\geq1$, so no positive power of
$x$ is normal.

As shown below, $\mathcal H_{p,q}$ is not
module-finite over its center. Consequently,
Lemma~\ref{lem:19} does not apply, and the
ozone group must be determined by a different
argument. In terms of graded down-up algebras, the
condition $pq=1$ corresponds to coincident
characteristic roots.

\begin{proposition}\label{prop:pq1}
Let $p,q\in\K^{\times}$ be roots of unity with $pq=1$, and let $m=\ord(q)$. Then
\begin{align*}
Z(\mathcal H_{p,q})=\K[z^{m}],
\end{align*}
$\mathcal H_{p,q}$ is not finitely generated as a module over its center, and
\begin{align*}
\{\varphi_{a,b}:a,b\in\K^{\times},\ (ab)^{m}=1\}\subseteq\Oz(\mathcal H_{p,q}).
\end{align*}
In particular $\Oz(\mathcal H_{p,q})$ is infinite for every root of unity $p$,
containing the one-dimensional torus $\{\varphi_{a,a^{-1}}:a\in\K^{\times}\}$.
\end{proposition}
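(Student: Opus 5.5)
The plan has three parts: identify the center, deduce that $\mathcal H_{p,q}$ is not module-finite over it, and then read off the ozone inclusion from the bigrading. Throughout, $p=q^{-1}$, so the relations become $zx=qxz$, $zy=q^{-1}yz$ and $yx-qxy=z$.

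\emph{The center.} Since $q^{m}=1$, the element $z^{m}$ commutes with $x$ and $y$, hence is central, so $\K[z^{m}]\subseteq Z(\mathcal H_{p,q})$. For the reverse inclusion I would invoke \cite[Theorem~1.3]{Zhao1999} in the case of coincident characteristic roots $\omega_1=\omega_2=q$. There $\Omega_1=\Omega_2=z$, and Zhao's description (which is why $\K=\C$ is assumed) gives the center as $\K[z^{m}]$. As a sanity check, and as a fallback, I would also sketch a direct argument.
\begin{itemize}
\item The center is $\Z^{2}$-graded, so take a central element $f$ of bidegree $(u,v)$ and write it in the PBW basis $x^{a}z^{b}y^{c}$ with $a-c=u-v$.
\item Commuting with $z$ multiplies every such monomial by $q^{c-a}$, which forces $m\mid u-v$.
\item Next look at the commutator with $y$ (or $x$) on the leading $x$-power. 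The correction term in \eqref{eq:k1} is $\pqnum{k}=kq^{k-1}$, which never vanishes for $k\geq1$. Comparing the coefficients of the monomials with maximal $a$ should show that no monomial with $a>0$ (and, by $\tau$ of Lemma~\ref{lem:anti}, none with $c>0$) can occur.
\item This leaves $f\in\K[z]$, and then $zx=qxz$ forces every exponent to be divisible by $m$.
\end{itemize}
The main obstacle is this triangularity step in the direct argument. The careful bookkeeping of how $y$ moves past $x^{a}z^{b}$ is exactly what Zhao's theorem packages, so I would rely on the citation and only indicate the leading-term computation.

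\emph{Non-module-finiteness.} This follows from a Gelfand--Kirillov dimension count. The center $\K[z^{m}]$ has GK dimension $1$. On the other hand, $\GKdim\mathcal H_{p,q}=3$ by the skew PBW description over $\K[z]$ recalled before Proposition~\ref{prop:regular}; that argument does not use $pq\neq1$. A module-finite extension of a commutative affine algebra has the same GK dimension, so $\mathcal H_{p,q}$ cannot be finitely generated over its center.

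\emph{The ozone group.} Lemma~\ref{lem:phi} holds for arbitrary $p,q$, so each $\varphi_{a,b}$ is an automorphism, and by \eqref{eq:peso} it acts on the bidegree-$(u,v)$ component by $a^{u}b^{v}$. The generator $z^{m}$ of the center has bidegree $(m,m)$, so $\varphi_{a,b}(z^{m})=(ab)^{m}z^{m}$. Hence $\varphi_{a,b}$ fixes the center pointwise exactly when $(ab)^{m}=1$, which gives the stated inclusion. Taking $b=a^{-1}$ gives the one-dimensional torus $\{\varphi_{a,a^{-1}}\}$. This family is infinite because the map $(a,b)\mapsto\varphi_{a,b}$ is injective by Lemma~\ref{lem:phi}.
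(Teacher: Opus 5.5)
Your proposal is correct and follows the paper's proof. The paper takes the center from \cite[Theorem~1.3(a)]{Zhao1999} for the coincident roots $\omega_1=\omega_2=q$, gets non-module-finiteness from the GK dimension count $3$ versus $1$, and gets the ozone inclusion from $\varphi_{a,b}(z^{m})=(ab)^{m}z^{m}$ via Lemma~\ref{lem:phi}, exactly as you do. Your direct fallback sketch is not needed, and its triangularity step is left unproved; the case $m=1$, that is $U(\mathfrak h)$, is also covered by Zhao's theorem because $\gamma=0$, as the paper notes.
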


\begin{proof}
With $p=q^{-1}$ the down-up parameters are $\alpha=p^{-1}+q=2q$ and
$\beta=-p^{-1}q=-q^{2}$, so $\alpha^{2}+4\beta=0$, that is $\Delta=0$, and
$\alpha/2=q$ has order $m$. Since $\gamma=0$,
\cite[Theorem~1.3(a)]{Zhao1999} gives
\begin{align*}
Z(\mathcal H_{p,q})=\K\bigl[(du-(\alpha/2)ud)^{m}\bigr]=\K[(yx-qxy)^{m}]=\K[z^{m}].
\end{align*}
(The case $m=1$ is $p=q=1$, where $\mathcal H_{p,q}=U(\mathfrak h)$ and
$Z=\K[z]$; it is allowed in \cite[Theorem~1.3(a)]{Zhao1999} because
$\gamma=0$.) Since $\GKdim\mathcal H_{p,q}=3$ while $\GKdim\K[z^{m}]=1$, the
algebra cannot be a finite module over its center.

By Lemma~\ref{lem:phi}, which places no restriction on $p$ and $q$, each
$\varphi_{a,b}$ is an automorphism, and $\varphi_{a,b}(z^{m})=(ab)^{m}z^{m}$.
So $\varphi_{a,b}$ fixes $\K[z^{m}]$ pointwise exactly when $(ab)^{m}=1$, and
$b=a^{-1}$ always works.
\end{proof}

\begin{remark}\label{rem:Uh}
For $p=q=1$ the inclusion in Proposition~\ref{prop:pq1} is strict, and the
ozone group is not even abelian. In $U(\mathfrak h)$ the assignment
\begin{align*}
\psi_\lambda(x)&=x+\lambda y, & \psi_\lambda(y)&=y, & \psi_\lambda(z)&=z
\end{align*}
is an automorphism, since $[x+\lambda y,y]=[x,y]$, and it fixes $Z=\K[z]$. For
$ab=1$ one gets $\varphi_{a,b}\psi_\lambda(x)=ax+\lambda b\,y$ and
$\psi_\lambda\varphi_{a,b}(x)=ax+\lambda a\,y$, which differ as soon as
$a\neq b$ and $\lambda\neq0$. Compare with the connected graded noetherian PI
AS regular case, where the ozone group is abelian
\cite[Theorem~3.4]{LiuWuZhu2026OzoneGroups}. Connectedness matters there: the
preprojective algebra of \cite[Example~5.1]{ChanGaddisWonZhang2025} is
noetherian and PI, and its ozone group is not abelian.
\end{remark}

\begin{remark}
For $pq\neq1$, Corollary~\ref{ozono2H_pq} shows that $\Oz(\mathcal H_{p,q})$ is finite and abelian, of order $\ell^2/t$. In contrast, when $pq=1$, the ozone group is infinite, and it is nonabelian if $p=q=1$. Thus the PI hypothesis in \cite[Theorem~3.12]{ChanGaddisWonZhang2025}
is essential for the finiteness of the ozone
group, even among connected graded noetherian
Artin--Schelter regular domains of global
dimension three.

The assumption $\K=\C$ in this section is
needed only to invoke
\cite[Theorem~1.3]{Zhao1999} for the
description of the center. The remaining
arguments hold over any algebraically closed
field $\K$ of characteristic zero.
\end{remark}


\section{Applications to questions of Chan, Gaddis, Won and Zhang}
\label{sec:CGWZ}

Section~6 of \cite{ChanGaddisWonZhang2024OzoneGroupsCenters} ends with a list
of questions on noetherian PI Artin--Schelter regular algebras, and down-up
algebras are proposed there as a test case. Question~6.4 was answered in
Section~\ref{sec:center}. Here we look at three more. Two of them turn out to
be the same question for down-up algebras, and an example of Liu, Wu and Zhu \cite{LiuWuZhu2026OzoneGroups} already settles it; what the family $\mathcal H_{p,q}$ adds is an exact
criterion in terms of $p$ and $q$. The third, on affine automorphisms and
locally nilpotent derivations, has a negative answer for every noetherian PI
graded down-up algebra that satisfies its hypotheses.

\subsection{Homological determinant and the element \texorpdfstring{$\mathfrak b$}{b}}

Following \cite[Section~6]{ChanGaddisWonZhang2024OzoneGroupsCenters}, with
the roles of $x$ and $y$ exchanged to match \eqref{eq:rel}, put
\begin{align*}
\mathfrak b&=\Omega_1\Omega_2, & \Omega_i&=yx-\omega_i\,xy,
\end{align*}
where $\omega_1,\omega_2$ are the characteristic roots of $A(\alpha,\beta)$.
With $\omega_1=q$ and $\omega_2=p^{-1}$ this is
\begin{align*}
\mathfrak b&=z\theta .
\end{align*}
For a diagonal graded automorphism $\sigma$ one has
$\sigma(\mathfrak b)=\mathrm{hdet}(\sigma)\,\mathfrak b$
\cite[(E6.2.1)]{ChanGaddisWonZhang2024OzoneGroupsCenters}. From this the
authors get that $A(\alpha,\beta)$ is Calabi--Yau if and only if
$\mathfrak b\in Z(A)$ \cite[Proposition~6.3]{ChanGaddisWonZhang2024OzoneGroupsCenters},
and that for a finite group $H$ of diagonal automorphisms the following are
equivalent: $H$ has trivial homological determinant, $A^{H}$ is Gorenstein,
$\mathfrak b\in A^{H}$
\cite[Proposition~6.5 and Corollary~6.6]{ChanGaddisWonZhang2024OzoneGroupsCenters}.
They then ask whether $\mathfrak b\in A^{\Oz(A)}$ implies $\mathfrak b\in Z(A)$
\cite[Question~6.7]{ChanGaddisWonZhang2024OzoneGroupsCenters}; the question is
repeated in \cite[Question~5.6]{ChanGaddisWonZhang2025}.

The other question is whether the ozone group has trivial homological
determinant exactly when $A$ is Calabi--Yau
\cite[Question~6.2(2)]{ChanGaddisWonZhang2024OzoneGroupsCenters}, repeated as
\cite[Question~5.2(3)]{ChanGaddisWonZhang2025}. Liu, Wu and
Zhu proved that Calabi--Yau implies trivial homological determinant
\cite[Theorem~3.8]{LiuWuZhu2026OzoneGroups} and showed with $A(0,1)$ that the
converse fails \cite[Example~3.9]{LiuWuZhu2026OzoneGroups}.

For down-up algebras the two questions coincide. The ozone group is a finite
group of diagonal automorphisms \cite[Corollary~4.7]{ChanGaddisWonZhang2025},
so by the results just quoted $\mathfrak b\in A^{\Oz(A)}$ says that
$\Oz(A)$ has trivial homological determinant, and $\mathfrak b\in Z(A)$ says
that $A$ is Calabi--Yau. In particular $A(0,1)$ answers
\cite[Question~6.7]{ChanGaddisWonZhang2024OzoneGroupsCenters} in the negative
as well. Since we know $\Oz(\mathcal H_{p,q})$, we can say exactly where the
two conditions separate.

\begin{proposition}\label{prop:hdet}
Let $p,q$ be roots of unity with $pq\neq1$, and let $e=\ell/t$. For
$\varphi_{a,b}\in\Oz(\mathcal H_{p,q})$,
\begin{align*}
\mathrm{hdet}(\varphi_{a,b})&=(ab)^{2}.
\end{align*}
The following are equivalent:
\begin{enumerate}[\rm (i)]
\item $e\mid 2$;
\item $\Oz(\mathcal H_{p,q})$ has trivial homological determinant;
\item $\mathfrak b\in\mathcal H_{p,q}^{\Oz(\mathcal H_{p,q})}$;
\item $\mathcal H_{p,q}^{\Oz(\mathcal H_{p,q})}$ is Gorenstein.
\end{enumerate}
On the other hand, $\mathfrak b\in Z(\mathcal H_{p,q})$ if and only if $p=q$,
if and only if $\mathcal H_{p,q}$ is Calabi--Yau.
\end{proposition}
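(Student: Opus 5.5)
The plan is to compute the homological determinant first and then deduce every equivalence from Theorem~\ref{ozonoH_pq} together with the results of Chan, Gaddis, Won and Zhang quoted just before the statement.

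\textbf{The formula for $\mathrm{hdet}$.} Since $\mathfrak b=z\theta$, Lemma~\ref{lem:phi} gives $\varphi_{a,b}(z)=ab\,z$ and $\varphi_{a,b}(\theta)=ab\,\theta$. Hence $\varphi_{a,b}(\mathfrak b)=(ab)^{2}\mathfrak b$. Every ozone automorphism is diagonal and graded (Proposition~\ref{diagHpq}), so \cite[(E6.2.1)]{ChanGaddisWonZhang2024OzoneGroupsCenters} applies and yields $\mathrm{hdet}(\varphi_{a,b})=(ab)^{2}$. If one prefers not to rely on that identity, there is a check: for a diagonal automorphism of an AS regular algebra of dimension three, the homological determinant can be read from the top form of the Koszul-type resolution. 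This gives $\mathrm{hdet}=a^{2}b^{2}$, because the relations have bidegrees $(2,1)$ and $(1,2)$ and the top term has bidegree $(2,2)$.

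\textbf{The equivalences (i)--(iv).} By Theorem~\ref{ozonoH_pq}, the products $ab$ for $\varphi_{a,b}\in\Oz$ range over all of $\mu_e$, since by Corollary~\ref{ozono2H_pq} the map $\Psi$ has image $\mu_\ell\times\mu_e$. So $\mathrm{hdet}$ is trivial on $\Oz$ if and only if $\lambda^{2}=1$ for every $\lambda\in\mu_e$, that is, if and only if $e\mid 2$. This proves (i)$\Leftrightarrow$(ii).

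For (ii)$\Leftrightarrow$(iii), we have $\varphi_{a,b}(\mathfrak b)=\mathfrak b$ exactly when $(ab)^{2}=1$, so the same computation applies. Alternatively, $\mathfrak b$ has bidegree $(2,2)$, and Proposition~\ref{prop:fijo} puts it in the fixed ring exactly when $e\mid 2$.

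For (ii)$\Leftrightarrow$(iv), I would invoke \cite[Proposition~6.5 and Corollary~6.6]{ChanGaddisWonZhang2024OzoneGroupsCenters} for the finite diagonal group $H=\Oz$. As a cross-check with Theorem~A(iv), the fixed ring is $S[U,V]/(UV-F)$ by Proposition~\ref{prop:fijo}, and $S$ is the $e$-th Veronese of $\K[z,\theta]$. That Veronese is Gorenstein iff $e\mid 2$, since it is the invariant ring of $\mu_e$ acting by scalars, which lies in $\mathrm{SL}_2$ iff $e\mid2$, and Watanabe's criterion applies. The descent argument of Proposition~\ref{prop:descenso} then transfers the Gorenstein property between $S$ and the fixed ring.

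\textbf{The last assertion.} $\mathfrak b=z\theta$ is a monomial in $\K[z,\theta]$ of bidegree $(2,2)$. By Proposition~\ref{prop:centro-bigrad} it is central iff $(1,1)\in L$, that is, iff $p=q$. By Corollary~\ref{calabiyau} this is equivalent to $\mathcal H_{p,q}$ being Calabi--Yau; it also matches \cite[Proposition~6.3]{ChanGaddisWonZhang2024OzoneGroupsCenters}.

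The only step needing care is the first: justifying the identification of $\mathrm{hdet}$ through $\mathfrak b$ with the paper's $x,y$ swapped. Everything after that is bookkeeping with the description $ab\in\mu_e$.
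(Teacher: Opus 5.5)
Your argument is correct, and its skeleton is the paper's. You compute $\mathrm{hdet}$ and get (i)$\Leftrightarrow$(ii) from the fact that $ab$ fills out $\mu_e$. You get (iii) and (iv) from \cite[Proposition~6.5]{ChanGaddisWonZhang2024OzoneGroupsCenters}, or directly from $\varphi_{a,b}(z\theta)=(ab)^{2}z\theta$. You settle the last claim by the centrality of $z\theta$ together with Corollary~\ref{calabiyau}.

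The difference is in how $\mathrm{hdet}(\varphi_{a,b})=(ab)^{2}$ is justified. You import it from the identity $\sigma(\mathfrak b)=\mathrm{hdet}(\sigma)\,\mathfrak b$ of \cite[(E6.2.1)]{ChanGaddisWonZhang2024OzoneGroupsCenters}. That is quick, but it makes (ii)$\Leftrightarrow$(iii) immediate rather than a consequence of an independent computation. The paper derives the formula from the trace series $\bigl((1-as)(1-bs)(1-ab\,s^{2})\bigr)^{-1}$ and \cite[Lemma~2.6]{JorgensenZhang2000}, so (E6.2.1) serves there as a consistency check rather than an input.

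Your fallback via the top term of the resolution is the paper's computation in disguise. Indeed $(1-as)(1-bs)(1-ab\,s^{2})=1-(a+b)s+ab(a+b)s^{3}-a^{2}b^{2}s^{4}$ is the equivariant Euler characteristic of the minimal resolution of $\K$, and its last coefficient is the scalar on the bidegree-$(2,2)$ term. The $x\leftrightarrow y$ normalization you flag is harmless, since $z$ and $\theta$ commute and $(ab)^{2}$ is symmetric in $a$ and $b$.

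Your cross-check of (iv) is a genuinely different argument and worth keeping. You apply Watanabe to $\mu_e$ acting by scalars on $\K z\oplus\K\theta$: it has no pseudo-reflections and lies in $\mathrm{SL}_2$ iff $e\mid 2$. Then the flat ascent and descent of Proposition~\ref{prop:descenso}, with $S$ in place of $R$, transfers this to the fixed ring. That proves (i)$\Leftrightarrow$(iv) inside commutative algebra, without \cite{ChanGaddisWonZhang2024OzoneGroupsCenters}.

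It also gives the commutative Gorenstein property directly. The citation gives AS Gorenstein, and converting that to Gorenstein needs the remark that the fixed ring is commutative. The paper makes that remark; your primary route omits it, though the cross-check covers it.
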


\begin{proof}
With $\deg x=\deg y=1$ and $\deg z=2$, the PBW monomial $x^{i}z^{j}y^{k}$ has
bidegree $(i+j,\,j+k)$, so by \eqref{eq:peso} the automorphism $\varphi_{a,b}$
multiplies it by $a^{i}(ab)^{j}b^{k}$. Hence
\begin{align*}
\mathrm{Tr}(\varphi_{a,b},s)
&=\frac{1}{(1-as)(1-bs)(1-ab\,s^{2})}
=-(ab)^{-2}s^{-4}+\text{(lower powers of $s$)},
\end{align*}
expanding in $s^{-1}$. As $\mathcal H_{p,q}$ is AS regular of dimension three,
\cite[Lemma~2.6]{JorgensenZhang2000} identifies the leading term with
$-\mathrm{hdet}(\varphi_{a,b})^{-1}s^{-4}$, so
$\mathrm{hdet}(\varphi_{a,b})=(ab)^{2}$. This is also
$\mathrm{hdet}\,g=(\det g|_{A_1})^{2}$ from
\cite[Theorem~1.5]{KirkmanKuzmanovich2005}.

By Corollary~\ref{ozono2H_pq}, $ab$ runs over all of $\mu_e$ as $\varphi_{a,b}$
runs over $\Oz(\mathcal H_{p,q})$, so (ii) holds exactly when
$\mu_e\subseteq\mu_2$, which is (i). The ozone group is finite and consists of
diagonal automorphisms (Proposition~\ref{diagHpq}), so
(ii)$\Leftrightarrow$(iii)$\Leftrightarrow$(iv) is
\cite[Proposition~6.5]{ChanGaddisWonZhang2024OzoneGroupsCenters} with
$H=\Oz(\mathcal H_{p,q})$; the fixed ring is commutative
(Proposition~\ref{prop:fijo}), so AS Gorenstein and Gorenstein mean the same
thing for it. One can also see (i)$\Leftrightarrow$(iii) directly from
$\varphi_{a,b}(z\theta)=(ab)^{2}z\theta$.

Finally $z\theta\,x=p^{-1}q\,x\,z\theta$ and $z\theta\,y=pq^{-1}\,y\,z\theta$,
so $z\theta$ is central exactly when $p=q$, which is the Calabi--Yau
condition of Corollary~\ref{calabiyau}.
\end{proof}

So for $\mathcal H_{p,q}$ the two conditions separate exactly when $e\mid2$ and
$p\neq q$. The smallest case is $A(0,1)=\mathcal H_{1,-1}$, with $\ell=t=2$.
Another is $p=1$, $q=\zeta_3$: there $\ell=t=3$ and $e=1$, so the ozone group
has trivial homological determinant and $\mathfrak b$ is in the fixed ring,
but $\mathfrak b x=\zeta_3 x\mathfrak b$, so $\mathfrak b$ is not central and
$\mathcal H_{1,\zeta_3}$ is not Calabi--Yau. The next example is the one we
find most instructive, because its center is not even Gorenstein.

\begin{corollary}\label{cor:hdet-noCY}
Let $p=\zeta_5$ and $q=\zeta_5^{2}$. Then $\Oz(\mathcal H_{p,q})\cong\mu_5$ has
trivial homological determinant and $\mathfrak b\in
\mathcal H_{p,q}^{\Oz(\mathcal H_{p,q})}$, but $\mathcal H_{p,q}$ is not
Calabi--Yau and $\mathfrak b\notin Z(\mathcal H_{p,q})$.
\end{corollary}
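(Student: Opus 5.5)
Everything here follows by specializing earlier results, so the plan is to compute the invariants $m,n,\ell,t,e$ for $p=\zeta_5$, $q=\zeta_5^{2}$ and then apply Corollary~\ref{ozono2H_pq} and Proposition~\ref{prop:hdet}. First, $m=\ord(\zeta_5)=5$ and $n=\ord(\zeta_5^{2})=5$, so $\ell=\operatorname{lcm}(5,5)=5$. Next, $pq=\zeta_5^{3}\neq1$, so we are in the PI setting of the paper, and $t=\ord(\zeta_5^{3})=5$. Hence $e=\ell/t=1$. This matches the data already recorded in Remark~\ref{rem:erratum}.

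With these values, Corollary~\ref{ozono2H_pq} gives $\Oz(\mathcal H_{p,q})\cong\mu_5\times\mu_1\cong\mu_5$. Concretely, Theorem~\ref{ozonoH_pq} says the group consists of the $\varphi_{a,a^{-1}}$ with $a\in\mu_5$. Since $e=1$ divides $2$, condition (i) of Proposition~\ref{prop:hdet} holds, and therefore so do (ii) and (iii): the ozone group has trivial homological determinant and $\mathfrak b\in\mathcal H_{p,q}^{\Oz(\mathcal H_{p,q})}$. This can also be checked directly, since $\mathrm{hdet}(\varphi_{a,b})=(ab)^{2}=1$ and $\varphi_{a,a^{-1}}(z\theta)=z\theta$.

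For the negative half, $p=\zeta_5\neq\zeta_5^{2}=q$. The last assertion of Proposition~\ref{prop:hdet}, or equivalently Corollary~\ref{calabiyau}, then gives that $\mathcal H_{p,q}$ is not Calabi--Yau and $\mathfrak b\notin Z(\mathcal H_{p,q})$. For a direct witness, $z\theta\,x=p^{-1}q\,x\,z\theta=\zeta_5\,x\,z\theta$, so $z\theta$ does not commute with $x$.

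No step is a real obstacle; the only care needed is computing $t=\ord(pq)$ correctly, since the whole conclusion rests on $e=1$.
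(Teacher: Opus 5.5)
Your proposal is correct and matches the paper's proof, which likewise observes that $\ell=t=5$, so $e=1$, and that $p\neq q$, and then applies Proposition~\ref{prop:hdet}. Your direct checks, $\varphi_{a,a^{-1}}(z\theta)=z\theta$ and $z\theta\,x=\zeta_5\,x\,z\theta$, are accurate additional confirmations.
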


\begin{proof}
Here $\ell=t=5$, so $e=1$, and $p\neq q$. Apply Proposition~\ref{prop:hdet}.
\end{proof}
Corollary~\ref{cor:hdet-noCY} gives a further negative answer to
\cite[Question~6.2(2)]{ChanGaddisWonZhang2024OzoneGroupsCenters} and
\cite[Question~6.7]{ChanGaddisWonZhang2024OzoneGroupsCenters}, that is, to
\cite[Questions~5.2(3) and~5.6]{ChanGaddisWonZhang2025}, now with
$p\neq1\neq q$.
\subsection{The fixed ring of the ozone group}

\begin{corollary}\label{cor:fijo-ci}
The fixed ring $\mathcal H_{p,q}^{\Oz(\mathcal H_{p,q})}$ is a complete
intersection if and only if it is Gorenstein, if and only if $e\in\{1,2\}$. It
is a hypersurface exactly when $e=1$, that is, when $t=\ell$.
\end{corollary}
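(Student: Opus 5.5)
The plan is to work directly with the presentation of Proposition~\ref{prop:fijo}, $\mathcal H_{p,q}^{\Oz(\mathcal H_{p,q})}\cong S[U,V]/(UV-F)$, where $S$ is the $e$-th Veronese subalgebra of $P=\K[z,\theta]$. The equivalence ``Gorenstein $\iff e\mid 2$'' is already item (iv)$\iff$(i) of Proposition~\ref{prop:hdet}. Since complete intersections are Gorenstein, one direction of the first equivalence is free. What remains is to show that $e\in\{1,2\}$ forces a complete intersection, and to settle the hypersurface statement.

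For $e=1$ we have $S=P$, so the fixed ring is $\K[z,\theta,U,V]/(UV-F)$, a hypersurface. For $e=2$ we have $S=\K[z^2,z\theta,\theta^2]\cong\K[u,v,w]/(uv-w^2)$. Hence the fixed ring is $\K[u,v,w,U,V]/(uv-w^{2},\,UV-\widetilde F)$, with $\widetilde F$ a lift of $F$; $F$ has degree $\ell$, which is even, so it lies in $S$. This is the argument from the proof of Corollary~\ref{cor:ci}: the quotient has dimension three (it is finite over the three-dimensional center by Proposition~\ref{prop:fijo}), so the two-generated homogeneous ideal has height two and is generated by a regular sequence. As a cross-check on the Gorenstein direction independent of Proposition~\ref{prop:hdet}, I would use faithful flatness of $S\subseteq S[U,V]/(UV-F)$, exactly as in Proposition~\ref{prop:descenso}. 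This reduces the question to $S$, and the $e$-th Veronese of $P$ is Gorenstein iff $e\mid 2$, by Watanabe, since $\mu_e$ acts by scalars with determinant $\zeta^2$ and contains no pseudo-reflections when $e>1$.

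For the hypersurface claim I would count minimal homogeneous generators, as in Corollaries~\ref{cor:generadores} and~\ref{cor:ci}. The algebra is graded and connected for the total grading, and a three-dimensional domain is a hypersurface iff its embedding dimension is at most four. For $e=1$ the four generators $z,\theta,U,V$ suffice. For $e\geq2$, the degree-$e$ part of $P$ contributes $e+1\geq3$ minimal generators of $S$. The irrelevant ideal $\mathfrak m$ of the fixed ring then satisfies
\[
\mathfrak m/\mathfrak m^{2}\supseteq\K\bar U\oplus\K\bar V\oplus S_{+}/(S_{+}^{2}+\K F).
\]
Off the diagonal only $U,V$ contribute. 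On the diagonal, $\mathfrak m^2$ meets $S$ in $S_+^2+\K F$, and $F$ has degree $\ell$. The main point to check is that $F$ cannot kill any of the degree-$e$ generators. When $\ell>e$ this is immediate from degrees. When $\ell=e$, that is $t=1$, we would need $pq=1$, which is excluded. So the embedding dimension is at least $e+3\geq5$, and the ring is not a hypersurface. The remaining case is $e=2$: there the count gives exactly five generators with two relations, which confirms that these fixed rings are complete intersections but not hypersurfaces.
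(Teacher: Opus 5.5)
Your proposal is correct and follows the paper's proof: Gorenstein $\iff e\mid 2$ comes from Proposition~\ref{prop:hdet}, the explicit presentations for $e=1,2$ come with the height-two regular-sequence argument, and an embedding-dimension count settles the hypersurface claim. The differences are minor: you rule out hypersurfaces for every $e\geq 2$ by the uniform bound $e+3$ (using $\ell>e$, i.e.\ $t\geq 2$), whereas the paper disposes of $e\geq 3$ through non-Gorensteinness and counts generators only for $e=2$; your Watanabe/faithful-flatness cross-check is also an independent, purely commutative route to the Gorenstein criterion.
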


\begin{proof}
For $e\geq3$ the fixed ring is not Gorenstein by Proposition~\ref{prop:hdet},
so it is not a complete intersection either. If $e=1$ then $S=\K[z,\theta]$,
and if $e=2$ then $S=\K[z^{2},z\theta,\theta^{2}]$. Proposition~\ref{prop:fijo}
then gives
\begin{align}
\mathcal H_{p,q}^{\Oz(\mathcal H_{p,q})}
&\cong \K[z,\theta,U,V]/(UV-F) && (e=1),
\label{eq:fijo-e1}\\
\mathcal H_{p,q}^{\Oz(\mathcal H_{p,q})}
&\cong \K[a,b,c,U,V]/\bigl(ac-b^{2},\ UV-\widetilde F\bigr) && (e=2),
\label{eq:fijo-e2}
\end{align}
where $a,b,c$ map to $z^{2},z\theta,\theta^{2}$ and $\widetilde F$ is a lift of
$F$. The ring in \eqref{eq:fijo-e2} has dimension three, so the ideal has height
two and its two generators form a regular sequence. In \eqref{eq:fijo-e1} we
have $F\in(z,\theta)^{2}$ because $\ell=t\geq2$, so four generators are needed
and the ring is a hypersurface which is not regular. In \eqref{eq:fijo-e2} we
have $\ell=2t\geq4$, so $F$ has degree at least $4$ in $z,\theta$ and lies in
the square of the irrelevant ideal of $S$; five generators are then needed, and
a ring of dimension three with five minimal generators is not a hypersurface.
For $e=1$ and $e=2$ the ozone group is the group $Q_1$, respectively $Q_2$, of
\cite[Lemma~3.4]{KirkmanKuzmanovichZhang2015DownUp}, with $n=\ell$,
respectively $n=\ell/2$,
and these two rings are
\cite[Examples~8.2 and~8.3]{KirkmanKuzmanovichZhang2015DownUp} (for
$(\alpha,\beta)=(0,\pm1)$ see also
\cite[Examples~5.4 and~5.5]{ChanGaddisWonZhang2025}); what
Proposition~\ref{prop:fijo} adds is the uniform description for every $e$ and
the rank over the center. Since the fixed ring is commutative, the
noncommutative notions of complete intersection of
\cite{KirkmanKuzmanovichZhang2015NCI} all reduce to this one
\cite[Lemma~1.8]{KirkmanKuzmanovichZhang2015NCI}.
\end{proof}

\noindent The ozone group is generated by bireflections in the sense of
\cite{KirkmanKuzmanovichZhang2015DownUp}. For $\varphi_{a,b}\neq1$ the trace
series computed in the proof of Proposition~\ref{prop:hdet} has a pole of order
one at $s=1$ exactly when $a=1$, $b=1$ or $ab=1$. The generators
$\eta_z=\varphi_{p,p^{-1}}$, $\eta_\theta=\varphi_{q^{-1},q}$ and
$\eta_{x^{t}}=\varphi_{1,q^{t}}$ of Corollary~\ref{cor:generadores-ozono} are
all of this kind, so the nontrivial ones are bireflections; compare
\cite[Lemma~7.3]{KirkmanKuzmanovichZhang2015DownUp}. None of them is a
quasi-reflection, in agreement with
\cite[Proposition~6.4]{KirkmanKuzmanovichZhang2008Rigidity}. So for $e\geq3$
we get groups generated by bireflections whose fixed ring is not even
Gorenstein.

The algebra of Corollary~\ref{cor:hdet-noCY} has a non-Gorenstein center
(Corollary~\ref{cor:no-gorenstein}), while its ozone fixed ring is a
hypersurface, since $e=1$. The Gorenstein property can therefore be lost on the
way from the fixed ring of the ozone group down to the center.

\begin{remark}\label{rem:gor-independientes}
By Proposition~\ref{prop:hdet} the fixed ring of the ozone group is Gorenstein
exactly when $e\mid2$, and by Corollary~\ref{cor:ci} the center is Gorenstein
exactly when \eqref{eq:gor} holds. The two conditions are independent, and each
of the four combinations occurs:
\begin{itemize}
\item for $(p,q)=(\zeta_3,\zeta_3)$ both hold;
\item for $(p,q)=(\zeta_6,\zeta_6^{2})$ the center is Gorenstein but the
      fixed ring is not, since $e=3\nmid2$;
\item for $(p,q)=(\zeta_5,\zeta_5^{2})$ the fixed ring is Gorenstein but
      the center is not, since \eqref{eq:gor} fails;
\item for $(p,q)=(\zeta_8,\zeta_8^{3})$ neither holds, since \eqref{eq:gor}
      fails and $e=4\nmid2$.
\end{itemize}
In the first and third cases $e=1$, so by Corollary~\ref{cor:fijo-ci} the fixed
ring is in fact a hypersurface. For skew polynomial rings the center is the
fixed ring of the ozone group, so the two properties coincide. Here they do
not, and Proposition~\ref{prop:Ct} says why: the center is a further fixed
ring, $Z(\mathcal H_{p,q})=\bigl(\mathcal H_{p,q}^{\Oz(\mathcal H_{p,q})}\bigr)^{C_t}$.
\end{remark}

\subsection{Affine automorphisms and locally nilpotent derivations}

The last question has two parts, which in the published version of
\cite{ChanGaddisWonZhang2024OzoneGroupsCenters} both carry the label~(5).
Suppose $A$ is generated in degree one and $Z(A)=\K[c_1,\dots,c_r]$ with
$\deg c_i>1$ for all $i$. The first part asks whether every automorphism of
$A$ is affine, that is, maps $A_1$ into $\K\oplus A_1$. The second asks whether
$A$ is LND-rigid, that is, whether $A$ has no nonzero locally nilpotent
derivation \cite[Question~6.2(5)]{ChanGaddisWonZhang2024OzoneGroupsCenters}.
Both hold for skew polynomial rings. For $\mathcal H_{1,q}$ both fail.

\begin{proposition}\label{prop:LND}
Let $q$ be a primitive $n$-th root of unity with $n\geq2$.
\begin{enumerate}[\rm (i)]
\item $\mathcal H_{1,q}$ is generated in degree one, and
$Z(\mathcal H_{1,q})=\K[z,x^{n},y^{n}]$ is a polynomial ring on generators of
degrees $2$, $n$ and $n$.
\item The assignment
\begin{align*}
\delta(x)&=0, & \delta(y)&=x^{n-1}, & \delta(z)&=(1-q)\,x^{n}
\end{align*}
extends to a nonzero locally nilpotent derivation of $\mathcal H_{1,q}$, so
$\operatorname{ML}(\mathcal H_{1,q})\subseteq\ker\delta\subsetneq
\mathcal H_{1,q}$.
\item For every $k\geq0$, $D_k:=x^{kn}\delta$ is a locally nilpotent
derivation, and for $\lambda\in\K$
\begin{align*}
\exp(\lambda D_k)(x)&=x, &
\exp(\lambda D_k)(y)&=y+\lambda x^{(k+1)n-1}, &
\exp(\lambda D_k)(z)&=z+\lambda(1-q)\,x^{(k+1)n}.
\end{align*}
If $\lambda\neq0$ and $(k,n)\neq(0,2)$, this automorphism is not affine. In
particular $\Aut(\mathcal H_{1,q})$ contains automorphisms of arbitrarily
large degree.
\end{enumerate}
\end{proposition}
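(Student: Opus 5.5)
Part (i) needs nothing new. Since $z=yx-qxy$, the algebra is generated by $x,y$, which sit in degree one for the total grading $\deg x=\deg y=1$, $\deg z=2$. Proposition~\ref{prop:regular} with $p=1$ (so $m=1$, $\ell=n$) gives $Z(\mathcal H_{1,q})=\K[z,x^{n},y^{n}]$ as a polynomial ring, with degrees $2,n,n$.

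For (ii) I will check that $\delta$ respects the three relations \eqref{eq:rel} with $p=1$. These are $zx=xz$, $zy=yz$ and $yx-qxy=z$. We extend $\delta$ as a derivation of the free algebra and verify that it sends each relator into the ideal.

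\begin{itemize}
\item For $zx-xz$: $\delta(z)x-x\delta(z)=(1-q)(x^{n+1}-x^{n+1})=0$.
\item For $yx-qxy-z$: $\delta(y)x-qx\delta(y)-\delta(z)=x^{n}-qx^{n}-(1-q)x^{n}=0$.
\item For $zy-yz$: the value is
\begin{align*}
\delta(z)y+z\delta(y)-\delta(y)z-y\delta(z)=(1-q)(x^{n}y-yx^{n})+(zx^{n-1}-x^{n-1}z).
\end{align*}
Since $p=1$, $z$ commutes with $x$, so the second bracket vanishes. By \eqref{eq:k1}, $yx^{n}=q^{n}x^{n}y+\pqnum{n}x^{n-1}z$ with $q^{n}=1$. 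Here $pq=q$ has order $t=n$, so $\pqnum{n}=0$ by \eqref{eq:pq}. Hence $x^{n}$ commutes with $y$ and the first bracket also vanishes.
\end{itemize}

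So $\delta$ is a derivation. It is nonzero because $\delta(y)=x^{n-1}\neq0$.

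Local nilpotence will follow from nilpotence on the generators, since the set of elements killed by some power of a derivation is a subalgebra (Leibniz rule). We have $\delta(x)=0$. Next, $\delta^{2}(y)=\delta(x^{n-1})=0$. Finally $\delta^{2}(z)=(1-q)\delta(x^{n})=0$. Hence $\delta$ is locally nilpotent. The inclusion $\operatorname{ML}\subseteq\ker\delta$ holds by definition of the Makar-Limanov invariant, and $\ker\delta\neq\mathcal H_{1,q}$ because $y\notin\ker\delta$.

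For (iii), note that $x^{kn}$ is central (it lies in $Z(\mathcal H_{1,q})$ by (i)) and lies in $\ker\delta$. For a central $c$, $c\delta$ is again a derivation. When moreover $\delta(c)=0$, we get $(c\delta)^{j}=c^{j}\delta^{j}$, so $D_k$ is locally nilpotent. Since $D_k^{2}$ kills $x$, $y$ and $z$, the exponential series truncates after the linear term on generators, which gives the displayed formulas. The map $\exp(\lambda D_k)$ is an automorphism with inverse $\exp(-\lambda D_k)$.

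It is not affine when $\lambda\neq0$ and $(k+1)n-1\geq2$: the image of $y$ then has a nonzero component $\lambda x^{(k+1)n-1}$ of degree at least $2$, outside $\K\oplus A_1$. That component is nonzero by the PBW basis. The condition $(k+1)n-1\geq2$ fails only when $k=0$ and $n=2$, which is exactly the excluded case. Letting $k$ grow gives automorphisms of arbitrarily large degree.

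The only step needing care is the $zy$ relation in (ii), which relies on $x^{n}$ being central. That fact comes from $\pqnum{n}=0$, i.e.\ from $t=n$ when $p=1$.
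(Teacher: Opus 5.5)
Your proposal is correct and follows the paper's proof essentially step by step. The paper makes the same three relation checks, with the $zy$ relation resting on the centrality of $x^{n}$ via $[n]_{p,q}=0$. It also derives local nilpotence from $\delta^{2}$ killing $x,y,z$, uses $D_k^{j}=x^{kjn}\delta^{j}$ in (iii), and shows non-affineness through the degree-$((k+1)n-1)$ component of the image of $y$.
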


\begin{proof}
(i) The algebra is generated by $x$ and $y$ because $z=yx-qxy$, and the center
is Proposition~\ref{prop:regular}.

(ii) For $p=1$ the relations \eqref{eq:rel} read $zx=xz$, $zy=yz$ and
$yx-qxy=z$, so $z$ is central, and $x^{n}$ is central because
$\pqnum{n}=(q^{n}-1)/(q-1)=0$. The map $\delta$, defined on the free algebra,
respects the relations:
\begin{align*}
\delta(z)x+z\delta(x)-\delta(x)z-x\delta(z)
&=(1-q)\bigl(x^{n+1}-x^{n+1}\bigr)=0,\\
\delta(z)y+z\delta(y)-\delta(y)z-y\delta(z)
&=(1-q)\bigl[x^{n},y\bigr]+\bigl[z,x^{n-1}\bigr]=0,\\
\delta(y)x+y\delta(x)-q\delta(x)y-qx\delta(y)-\delta(z)
&=x^{n}-qx^{n}-(1-q)x^{n}=0 .
\end{align*}
Since $\delta^{2}$ kills $x$, $y$ and $z$, the Leibniz rule shows that
$\delta$ is locally nilpotent. It is nonzero, and $y\notin\ker\delta$.

(iii) The element $x^{kn}$ is central and $\delta(x^{kn})=0$, so $D_k$ is a
derivation and $D_k^{m}=x^{kmn}\delta^{m}$; in particular $D_k$ is locally
nilpotent, and $\exp(\lambda D_k)$ is an automorphism with inverse
$\exp(-\lambda D_k)$. As $D_k^{2}$ kills the generators, the displayed
formulas follow from $D_k(y)=x^{(k+1)n-1}$ and
$D_k(z)=(1-q)x^{(k+1)n}$. The image of $y$ has a nonzero component of degree
$(k+1)n-1$, which is at least $2$ unless $k=0$ and $n=2$.
\end{proof}

For $k=0$ and $n=2$ the automorphism $\exp(\lambda\delta)$ sends $y$ to
$y+\lambda x$, so it is graded; this is the only case where the multiplier
$x^{kn}$ is really needed.

\begin{corollary}\label{cor:LND-downup}
Let $A=A(\alpha,\beta)$ be a noetherian PI graded down-up algebra. Then $A$
satisfies the hypotheses of
\cite[Question~6.2(5)]{ChanGaddisWonZhang2024OzoneGroupsCenters} if and only
if $\alpha+\beta=1$, and in that case $A$ has non-affine automorphisms and is
not LND-rigid.
\end{corollary}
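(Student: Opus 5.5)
The plan is to reduce to the two-parameter presentation and then read both directions off results already established. By Proposition~\ref{prop:Hpq-downup} we may take $A=\mathcal H_{p,q}$ with $\alpha=p^{-1}+q$, $\beta=-p^{-1}q$, $p,q$ roots of unity and $pq\neq1$. The algebra is always generated in degree one by $x$ and $y$, since $z=yx-qxy$. So the hypotheses of \cite[Question~6.2(5)]{ChanGaddisWonZhang2024OzoneGroupsCenters} amount to requiring that $Z(A)$ be a polynomial ring $\K[c_1,\dots,c_r]$ with all $\deg c_i>1$.

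For the ``only if'' direction:
\begin{itemize}
\item If $Z(A)$ is a polynomial ring, it is regular.
\item By Corollary~\ref{cor:regular-downup}, regularity forces $\alpha+\beta=1$.
\end{itemize}

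For the ``if'' direction:
\begin{itemize}
\item Assume $\alpha+\beta=1$. Then $p=1$ or $q=1$, by the factorization $\alpha+\beta-1=-(p^{-1}-1)(q-1)$ in the proof of Corollary~\ref{cor:regular-downup}.
\item Proposition~\ref{prop:regular} gives $Z=\K[z,x^{n},y^{n}]$ when $p=1$, and $Z=\K[\theta,x^{m},y^{m}]$ when $q=1$.
\item In both cases the generators have degrees $2$, $\ell$ and $\ell$ in the total grading.
\item Since $pq\neq1$, the nontrivial one of $p,q$ has order $\ell\geq2$, so every degree exceeds $1$.
\end{itemize}
Thus the hypotheses hold exactly when $\alpha+\beta=1$.

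For the conclusion, the plan splits into two cases.
\begin{itemize}
\item If $p=1$, apply Proposition~\ref{prop:LND} directly with $n=\ord(q)\geq2$. It supplies a nonzero locally nilpotent derivation, so $A$ is not LND-rigid.
\item It also supplies non-affine automorphisms: take $\exp(\lambda D_k)$ with $\lambda\neq0$ and $k\geq1$. Choosing $k\geq1$ avoids the exceptional pair $(k,n)=(0,2)$.
\item If $q=1$ instead, use Remark~\ref{rem:simetria}: $\mathcal H_{p,1}\cong\mathcal H_{1,p^{-1}}$, which corresponds to exchanging the characteristic roots and leaves $(\alpha,\beta)$ unchanged.
\item Here $p^{-1}$ is a root of unity of order $m\geq2$, so the first case applies.
\end{itemize}
Non-affineness and LND-rigidity are properties of the abstract algebra together with its degree-one generating space. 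The isomorphism of Remark~\ref{rem:simetria} comes from the identification with $A(\alpha,\beta)$ on both sides, so it is graded and preserves $A_1$. The two properties therefore transfer.

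The main obstacle I expect is this last point. I must check that the notion of ``affine'' is intrinsic to the graded algebra, that is, that the isomorphism is graded and preserves $A_1$. Both sides are literally $A(\alpha,\beta)$ with the same $x,y$ in degree one, so this should be immediate. The other point is that $\deg z=2>1$ uses the total grading in which $x,y$ have degree $1$, which is the grading of the question.
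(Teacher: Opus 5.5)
Your proposal is correct and follows essentially the same route as the paper: reduce to $\mathcal H_{p,q}$, obtain the equivalence from Corollary~\ref{cor:regular-downup} and Proposition~\ref{prop:regular}, pass from $\mathcal H_{p,1}$ to $\mathcal H_{1,p^{-1}}$ via Remark~\ref{rem:simetria}, and then invoke Proposition~\ref{prop:LND}. Two of your points are left implicit in the paper: that this isomorphism is graded, since both sides are $A(\alpha,\beta)$ with the same degree-one generators, and that choosing $k\geq1$ avoids the exceptional pair $(k,n)=(0,2)$.
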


\begin{proof}
$A$ is generated in degree one. Its center is a connected graded affine domain,
so it is a polynomial ring exactly when it is regular, which by
Corollary~\ref{cor:regular-downup} means $\alpha+\beta=1$. In that case
$A\cong\mathcal H_{1,q}$ or $A\cong\mathcal H_{p,1}$ by
Proposition~\ref{prop:Hpq-downup} and Proposition~\ref{prop:regular}, and
$\mathcal H_{p,1}\cong\mathcal H_{1,p^{-1}}$ by Remark~\ref{rem:simetria}. The
generators of the center have degrees $2$, $n$, $n$ with $n\geq2$, and
Proposition~\ref{prop:LND} applies.
\end{proof}

\begin{remark}\label{rem:LND}
In \cite[Question~6.2(5)]{ChanGaddisWonZhang2024OzoneGroupsCenters} the
symbol $\mathrm{LND}(A)$ stands for the intersection of the kernels of all
locally nilpotent derivations, and the question is written as
$\mathrm{LND}(A)=\{0\}$. That intersection is the Makar--Limanov invariant
$\operatorname{ML}(A)$, which always contains $\K$. Since the question is
introduced through the notion of LND-rigidity, we have read it as
$\operatorname{ML}(A)=A$, which is how it is stated above.

Corollary~\ref{cor:LND-downup} says that inside the class proposed in
\cite{ChanGaddisWonZhang2024OzoneGroupsCenters} as a source of examples the
answer is never positive. The smallest case is $\mathcal H_{1,-1}\cong A(0,1)$,
the algebra singled out there as having regular center, where
$D_1=x^{2}\delta$ gives $y\mapsto y+\lambda x^{3}$. It is also the algebra
used by Liu, Wu and Zhu for Question~6.2(2).

Since $\mathcal H_{1,q}$ is PI, the cancellation results of
\cite[Theorem~0.2]{TVZ21} for non-PI algebras do not apply to it.
\end{remark}

\section*{Acknowledgments}

The authors would like to thank the members of the Algebra Seminar, especially Professor Claudia Gallego, for their valuable discussions and comments. Preliminary results of this work were presented and discussed during the seminar, which provided a stimulating environment for the development of the ideas and the preparation of this manuscript.

\section*{Declaration of AI Use}

All mathematical results, arguments, and proofs presented
in this article were developed independently by the authors,
who also prepared the original manuscript without AI
assistance. OpenAI's ChatGPT was subsequently used to
improve the English language and mathematical exposition,
and to assist in checking proofs and identifying possible
errors. All suggested revisions were independently
evaluated and verified by the authors, who take full
responsibility for the accuracy and integrity of the
final manuscript.


\bibliographystyle{plain}
\bibliography{biblio}
\bigskip
\noindent
James Yair G\'omez Lozano\\
Grupo de \'Algebra y An\'alisis\\
Universidad Pedag\'ogica y Tecnol\'ogica de Colombia\\
Tunja, Colombia\\
james.gomez@uptc.edu.co\\

\noindent H\'ector Julio Su\'arez Su\'arez\\
Grupo de \'Algebra y An\'alisis\\
Universidad Pedag\'ogica y Tecnol\'ogica de Colombia\\
Tunja, Colombia\\
hector.suarez@uptc.edu.co\\

\noindent Helbert Javier Venegas Ram\'irez\\
Grupo ICAAO\\
Universidad Militar Nueva Granada\\
Bogot\'a, Colombia\\
helbert.venegas@unimilitar.edu.co

\end{document}